\documentclass[10pt,reqno]{amsart}
\usepackage{amssymb,mathrsfs,graphicx}
\usepackage{ifthen}
\usepackage{hyperref}
\usepackage[margin=1in]{geometry}
\usepackage{caption}
\usepackage{sidecap}
\usepackage{rotating,dsfont}
\usepackage{enumitem}

\usepackage{soul}
\usepackage{cancel}

\usepackage{tabularx}

\usepackage{colortbl}
\definecolor{black}{rgb}{0.0, 0.0, 0.0}
\definecolor{red}{rgb}{1.0, 0.5, 0.5}
\provideboolean{shownotes} 
\setboolean{shownotes}{true} 
\newcommand{\margnote}[1]{
\ifthenelse{\boolean{shownotes}}%
{\marginpar{\raggedright\tiny\texttt{#1}}}%
{}%
}
\newcommand{\hole}[1]{
\ifthenelse{\boolean{shownotes}}%
{\begin{center} \fbox{ \rule {.25cm}{0cm} \rule[-.1cm]{0cm}{.4cm}
\parbox{.85\textwidth}{\begin{center} \texttt{#1}\end{center}} \rule
{.25cm}{0cm}}\end{center}} {} }

\title[Damped Euler--Poisson: large solutions and overdamped limits]{Global large smooth solutions and overdamped limits
for the damped isothermal Euler--Poisson system}

\author[Choi]{Young-Pil Choi}
\address[Young-Pil Choi]{\newline Department of Mathematics\newline
Yonsei University, 50 Yonsei-Ro, Seodaemun-Gu, Seoul 03722, Republic of Korea}
\email{ypchoi@yonsei.ac.kr}

\author[Jung]{Jinwook Jung}
\address[Jinwook Jung]{\newline Department of Mathematics and  Research Institute for Natural Sciences \newline
Hanyang University, 222 Wangsimni-ro, Seongdong-gu, Seoul 04763, Republic of Korea}
\email{jinwookjung@hanyang.ac.kr}

\numberwithin{equation}{section}

\newtheorem{theorem}{Theorem}[section]
\newtheorem{lemma}{Lemma}[section]

\newtheorem{proposition}{Proposition}[section]
\newtheorem{remark}{Remark}[section]

\newcommand{\R}{\mathbb R}

\newcommand{\N}{\mathbb N}

\newcommand{\T}{\mathbb T}

\newcommand{\bq}{\begin{equation}}
\newcommand{\eq}{\end{equation}}

\newcommand{\lt}{\left}
\newcommand{\rt}{\right}

\newcommand{\pa}{\partial}

\newcommand{\intt}{\int_{\T^d}}

\newcommand{\calC}{\mathcal C}

\newcommand{\calE}{\mathcal E}
\newcommand{\calF}{\mathcal F}

\newcommand{\calH}{\mathcal H}
\newcommand{\calI}{\mathcal I}

\newcommand{\calM}{\mathcal M}
\newcommand{\calN}{\mathcal N}

\newcommand{\calR}{\mathcal R}

\makeatletter
\def\moverlay{\mathpalette\mov@rlay}
\def\mov@rlay#1#2{\leavevmode\vtop{%
   \baselineskip\z@skip \lineskiplimit-\maxdimen
   \ialign{\hfil$\m@th#1##$\hfil\cr#2\crcr}}}
\newcommand{\charfusion}[3][\mathord]{
    #1{\ifx#1\mathop\vphantom{#2}\fi
        \mathpalette\mov@rlay{#2\cr#3}
      }
    \ifx#1\mathop\expandafter\displaylimits\fi}
\makeatother

\begin{document}
\allowdisplaybreaks

\date{\today}

\keywords{Euler--Poisson system, linear damping, large smooth solutions, large-time behavior, overdamped limit, drift-diffusion--Poisson equation.}
\subjclass[2020]{35Q31, 76N10}

\begin{abstract}  
We consider the isothermal Euler--Poisson system with linear damping on a periodic domain in the large damping regime. For arbitrarily large smooth initial data with density bounded away from vacuum, we prove the global-in-time existence of smooth solutions. The argument is based on a large-damping bootstrap scheme, a modified density estimate revealing hidden parabolic dissipation, top-order weighted cancellations, and a comparison with an auxiliary drift-diffusion--Poisson system. We further prove exponential relaxation to the homogeneous equilibrium and establish a large-data overdamped limit as the damping coefficient tends to infinity. In the slow time scale, the density converges quantitatively to the large smooth solution of the drift-diffusion--Poisson system with the same initial density. After subtracting a fast initial layer from the rescaled flux, the flux converges quantitatively to the corresponding drift-diffusion flux.
\end{abstract}

\maketitle \centerline{\date}

\tableofcontents

%
%
%
%

\section{Introduction} 

We consider the isothermal Euler--Poisson system with linear damping on the flat torus $\T^d$:
\begin{align}\label{eq:main}
\begin{aligned}
&\pa_t \rho+\nabla\cdot(\rho u)=0, \quad t>0,\ x\in\T^d,\cr
&\pa_t(\rho u)+\nabla\cdot(\rho u\otimes u) + \nabla\rho+\rho\nabla\phi =-\nu\rho u,\cr
&-\Delta\phi=\rho-1.
\end{aligned}
\end{align}
Here $\rho=\rho(t,x)>0$ is the density, $u=u(t,x)\in\R^d$ is the velocity field, $\phi=\phi(t,x)$ is the electrostatic potential, and $\nu>0$ is the damping strength. The system is supplemented with the initial data
\bq\label{eq:initial}
(\rho,u)|_{t=0}=(\rho_0,u_0).
\eq
Throughout the paper, we normalize the total mass and the Poisson potential by
\[
 \intt \rho_0\,dx=1,\quad  \intt \phi(t,x)\,dx=0.
\]
The first condition is the compatibility condition for the Poisson equation, while the second one fixes the additive constant in $\phi$. We also assume, without loss of generality, that
\[
 \intt \rho_0 u_0\,dx=0.
\]
Indeed,
\[
 \frac{d}{dt}\intt \rho u\,dx  =  -\nu\intt \rho u\,dx,
\]
so this condition removes the homogeneous momentum mode.

The purpose of this paper is to study the large damping regime for \eqref{eq:main}. We allow the initial perturbation from the homogeneous
state to be large in Sobolev norms, while requiring the damping coefficient $\nu$ to be sufficiently large depending on the initial size and the pointwise density bounds. The main difficulty is that the damping acts directly only on the velocity. The density is affected indirectly through the continuity equation and the force balance involving both the pressure and the self-consistent Poisson field. Thus, in the large-data regime, the Poisson force cannot be treated simply as a lower-order perturbation; it has to be incorporated into the dissipative and cancellation structures of the estimates.

Let us briefly review some related results. The local-in-time theory for quasilinear symmetric hyperbolic systems is classical \cite{Kat75,Lax73,Maj84}. On the other hand, smooth solutions of compressible Euler equations may form singularities in finite time for large initial data \cite{BCG25, MRRS22, Sid85}. Linear damping can suppress this mechanism for small perturbations of constant states and leads to global existence and relaxation for damped Euler equations \cite{STW03,WY01}.  More recently, a large-damping theory for the isothermal Euler equations with damping was developed in \cite{Pen24}, where global large smooth solutions and a large-data relaxation limit toward the heat equation were obtained, together with a quantitative flux error estimate after subtracting a fast initial layer. Related large-damping global dynamics for damped isothermal Euler equations with prescribed exterior potentials were studied in \cite{CTZpre}, including large-data global well-posedness, sharp algebraic decay for small perturbations, and blow-up phenomena in the pressureless regime.

The Euler--Poisson system has a richer structure because the density is coupled to a self-consistent potential. This coupling gives rise to delicate threshold phenomena and strongly influences the global dynamics. Critical thresholds and global regularity have been studied for Euler--Poisson systems and related nonlocal-force models, especially in one-dimensional, pressureless, or radially symmetric settings \cite{BL20,BLpre, CCTT16, CCZ16,  CKKT26,ELT01,TW08}. In the undamped case, global smooth solutions near constant equilibria have been obtained in several dimensions under suitable smallness, irrotationality, or structural assumptions \cite{Guo98,GHZ17, IP13,JLZ14,LW14}.  Related Euler--Riesz systems have also been studied in connection with global smooth solutions and damping effects \cite{CJu23, CJL24,CJL25,CJL26, DD22}.

Large-time behavior for damped Euler--Poisson and related hydrodynamic semiconductor models has also been extensively investigated near constant or stationary states. For small perturbations, global existence, asymptotic stability, and decay estimates have been obtained by energy methods, Green function analysis, spectral analysis, and time-weighted energy estimates \cite{AJ03,GHL03, HMW03,LY12,TTX20, WQ15, WW14,XK15}.  These results are mostly perturbative around constant or stationary states, whereas our global existence result allows the initial perturbation to be arbitrarily large in Sobolev norms, provided the damping coefficient is chosen sufficiently large and the initial density is bounded away from vacuum.

Relaxation and high-friction limits form another important direction. For damped Euler equations, strong relaxation limits toward parabolic equations have been established in both isothermal and isentropic regimes \cite{CG07,GH19, HMP05, HPW11,LC13}. For Euler--Poisson, Euler--Riesz, and related nonlocal systems, modulated energy and relative entropy methods have been used to justify high-friction limits toward gradient-flow, porous-medium, or aggregation-diffusion type equations \cite{ACC24, AH26,Cho21,CJe21, LT17}. These works reveal the natural parabolic or gradient-flow structure emerging from Euler-type systems with strong friction.

The present work addresses the large-damping problem in the self-consistent Euler--Poisson setting, together with the associated
large-time relaxation and overdamped limit. First, we prove global-in-time smooth solutions for arbitrarily large smooth initial perturbations with strictly positive density, provided that the damping coefficient $\nu$ is sufficiently large. A key feature of the analysis is that the Poisson force cannot be treated merely as a lower-order perturbation. Instead, it must be incorporated into both the dissipative structure and the high-order cancellations needed to close the large-data estimates.

Second, we justify the corresponding large-data overdamped limit. Compared with the damped isothermal Euler equations, whose large-damping limit is the heat equation, the Poisson coupling changes the limiting dynamics to a drift-diffusion--Poisson system. In the slow time scale $s=t/\nu$, the density converges quantitatively to the large smooth solution of the limiting system, and the rescaled flux, after subtracting a damped-heat initial layer, converges strongly to the corresponding drift-diffusion flux. The same overdamped structure also explains the slow exponential relaxation rate of the original Euler--Poisson solution.

%
%
%
%
%
\subsection{Main results}

We first state the global existence and relaxation result.
\begin{theorem}\label{thm:main}
Let $2\le d\le4$ and let $m\in\N$ satisfy $m>\frac d2$. Assume that $(\rho_0,u_0)\in H^{m+1}(\T^d)\times H^{m+1}(\T^d)$ and that
\[
\rho_1\le \rho_0(x)\le \rho_2,\quad x\in\T^d,
\]
for some constants $\rho_1,\rho_2>0$. Then there exists
\[
\nu_0=\nu_0(d,m,\rho_1,\rho_2,\|\rho_0-1\|_{H^{m+1}},\|u_0\|_{H^{m+1}})>0
\]
such that, for every $\nu\ge\nu_0$, the isothermal Euler--Poisson system \eqref{eq:main}--\eqref{eq:initial} admits a global-in-time solution
\[
(\rho,u)\in C([0,\infty);H^{m+1}(\T^d)) \times C([0,\infty);H^{m+1}(\T^d)).
\]
Moreover, the density remains uniformly bounded away from vacuum.

Furthermore, the solution converges exponentially to the homogeneous equilibrium. More precisely, there exist constants $C>0$ and $c>0$, independent of $t$ and $\nu$, such that
\[
\|\rho(t)-1\|_{H^{m+1}} + \|u(t)\|_{H^{m+1}} \le C e^{-\theta t}, \quad t\ge0,
\]
where
\[
\theta:=c\min\lt\{\nu,\frac1\nu\rt\}.
\]
In particular, since $\nu_0$ may be chosen so that $\nu_0\ge1$, one has $\theta=\frac{c}\nu$ in the large damping regime considered here.
\end{theorem}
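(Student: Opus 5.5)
We argue by a continuation argument built on the classical local well-posedness theory for symmetric hyperbolic systems, in the variables $(h,u)$ with $h=\log\rho$. In these variables the momentum equation becomes
\[
\pa_t u+u\cdot\nabla u+\nabla h+\nabla\phi=-\nu u ,
\]
and the continuity equation becomes $\pa_t h+u\cdot\nabla h+\nabla\cdot u=0$. This form is symmetric, so local existence in $H^{m+1}$ and the standard blow-up criterion are available; global existence then reduces to closing an a priori bound on a time interval $[0,T]$ on which we assume bootstrap hypotheses.

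The bootstrap hypotheses are:
\begin{itemize}
\item[(i)] $\rho_1/2\le\rho\le2\rho_2$;
\item[(ii)] $\|\rho-1\|_{H^{m+1}}\le 2M$, where $M$ depends only on the initial data;
\item[(iii)] $\nu\|u\|_{H^{m}}+\|u\|_{H^{m+1}}\le K$ on a suitable time scale.
\end{itemize}
The expected structure is that $u\approx -\nu^{-1}\nabla(h+\phi)$ after an initial layer of length $O(\nu^{-1})$. Quantitatively, $\|u(t)\|\lesssim e^{-\nu t}\|u_0\|+\nu^{-1}$, so all velocity-driven nonlinearities carry a factor $\nu^{-1}$.

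The density estimate is the modified estimate that reveals hidden parabolic dissipation. Introduce the quantity
\[
w:=\nu u+\nabla(h+\phi).
\]
It satisfies $\pa_t w+\nu w=O(\text{nonlinear})$ plus time derivatives of $\nabla(h+\phi)$, and the latter are themselves $O(|u|)$. Substituting $u=\nu^{-1}(w-\nabla h-\nabla\phi)$ into the continuity equation gives
\[
\pa_t h-\nu^{-1}\Delta h-\nu^{-1}(\rho-1)\frac{1}{\rho}\cdots=\nu^{-1}\nabla\cdot w+\text{nonlinear}.
\]
More precisely, $\nu\,\pa_t\rho=\Delta\rho+\nabla\cdot(\rho\nabla\phi)-\nabla\cdot(\rho w)$. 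This is a drift-diffusion--Poisson equation with a forcing $w$ that is exponentially damped at rate $\nu$. Weighted $H^{m+1}$ estimates of $\rho$ for this parabolic equation, with the Poisson drift handled through $\rho-1=-\Delta\phi$, are compared with the auxiliary drift-diffusion--Poisson system. That system has a global large smooth solution on $\T^d$ with uniform bounds and exponential decay; for the comparison we need $d\le4$ and $m>d/2$ for the product and commutator estimates.

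The main obstacle is the top-order energy estimate. The $\nabla^{m+1}$ derivative of $\rho$ loses one derivative in the parabolic estimate unless it is paired with $\nabla^{m+1}u$ through the symmetric hyperbolic structure. We therefore use weights $\rho^{-1}$, or equivalently energy in $h$, so that the terms $\int\nabla^{m+1}h\cdot\nabla^{m+1}\nabla\!\cdot u+\int\nabla^{m+1}u\cdot\nabla^{m+1}\nabla h$ cancel. The Poisson term $\int\nabla^{m+1}u\cdot\nabla^{m+1}\nabla\phi$ is rewritten using the continuity equation, via $\pa_t\nabla\phi=-\nabla\Delta^{-1}\nabla\cdot(\rho u)$, as a time derivative of $\tfrac12\|\nabla^{m+1}\nabla\phi\|^2$-type quantities plus lower-order terms. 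Taking a weighted sum of the hyperbolic top-order energy and the parabolic estimate with $\nu$-dependent weights, the dissipation
\[
\nu\|u\|_{H^{m+1}}^2+\nu^{-1}\|\nabla\rho\|_{H^{m}}^2
\]
absorbs the cubic terms once $\nu\ge\nu_0$. This closes hypotheses (ii)--(iii) with strict inequalities, and (i) follows from the $L^\infty$ closeness of $\rho$ to the drift-diffusion solution, which satisfies the maximum principle bounds.

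Finally, the same Lyapunov functional $\mathcal E$ is equivalent to $\|\rho-1\|_{H^{m+1}}^2+\|u\|_{H^{m+1}}^2$. On the torus with zero mean, the Poincaré inequality controls $\|\rho-1\|$ by $\|\nabla\rho\|$, and the mean of $\rho u$ vanishes. This yields $\frac{d}{dt}\mathcal E+c\min\{\nu,\nu^{-1}\}\mathcal E\le0$, and hence the exponential decay with rate $\theta$.
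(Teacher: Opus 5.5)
Your outline follows the paper's architecture: a symmetric top-order energy, hidden parabolic dissipation for the density, and comparison with the maximum-principle drift-diffusion--Poisson flow. Your $\log\rho$ cancellation and your treatment of the top-order Poisson term through $\pa_t\nabla\phi$ are fine substitutes for the paper's $(1+h)^{-2}$ and $(1+h)^{-1}$ weights.

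\textbf{Main gap: absorption fails for large data.} The cubic terms cannot be absorbed by taking $\nu\ge\nu_0$. The density only gains the dissipation $\nu^{-1}\|\nabla^{m+1}\rho\|_{L^2}^2$. Against this, the top-order commutators produce $\|\nabla u\|_{L^\infty}\|\nabla^{m+1}\rho\|_{L^2}^2$ and $\|\nabla\rho\|_{L^\infty}\|\nabla^{m+1}u\|_{L^2}\|\nabla^{m+1}\rho\|_{L^2}$, and the nonlinear Poisson drift in the parabolic estimate produces $\nu^{-1}\|\nabla^m\rho\|_{L^2}^2\|\nabla^{m+1}\rho\|_{L^2}$. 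After Young's inequality (against $\nu\|\nabla^{m+1}u\|_{L^2}^2$ where a velocity factor is present), each is $\nu^{-1}\|\nabla^{m+1}\rho\|_{L^2}^2$ times a factor of the size of the data. That is the same order in $\nu$ as the dissipation, so neither a large $\nu_0$ nor any choice of weights absorbs them. Hypothesis (iii) does not help: it is false at $t=0$ since $\nu\|u_0\|_{H^m}=O(\nu)$, and ``on a suitable time scale'' is not a closed bootstrap.

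The paper closes by Gronwall instead. For example, $C\|\nabla^{m+1}u\|_{L^2}\|\nabla^{m+1}\rho\|_{L^2}^2\le\frac\nu4\|\nabla^{m+1}u\|_{L^2}^2+\frac C\nu\|\nabla^{m+1}\rho\|_{L^2}^2$ times the top-order energy. This coefficient is time-integrable because the modified density estimate at orders $\le m$ (proved by induction on the order, with Gronwall at each step) gives $\nu^{-1}\int_0^T\|\rho-1\|_{H^{m+1}}^2\,dt\le C_1$. Here $C_1$ is independent of $T$, $\nu$ and the bootstrap radius. For this, the bootstrap must carry the time-integrated dissipation $(\nu\int_0^T\|u\|_{H^{m+1}}^2\,dt)^{1/2}$, which stays $O(\|u_0\|_{H^{m+1}})$ through the initial layer, not a pointwise bound on $\nu u$. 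That is what lets one fix the radius from the data before choosing $\nu_0$.

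\textbf{Density comparison.} The step that should give (i) has a related defect. For your $w=\nu u+\nabla(\log\rho+\phi)$ one has $\rho w=-\pa_t(\rho u)-\nabla\cdot(\rho u\otimes u)$. So treating $\nu^{-1}\nabla\cdot(\rho w)$ as a forcing keeps $\nu^{-1}\nabla\cdot\pa_t(\rho u)$ on the right-hand side.
\begin{itemize}
\item For $t\lesssim\nu^{-1}$, $\|\pa_t(\rho u)\|_{H^m}\approx\nu\|\rho_0u_0\|_{H^m}$. The parabolic $H^m$ estimate then only gives $\sup_t\|\rho-\tilde\rho\|_{H^m}^2\lesssim\nu^{-1}\int_0^\infty\|\rho w\|_{H^m}^2\,dt$, a bound that is $O(1)$ rather than $O(\nu^{-2})$.
\item An $L^1_t$ estimate would need $\pa_t(\rho u)\in H^{m+1}$, i.e.\ $\rho\in H^{m+2}$.
\item ``$w$ is exponentially damped'' is not available at this regularity, since $\pa_tw+\nu w$ contains $\nabla\pa_t\log\rho=-\nabla(u\cdot\nabla\log\rho)-\nabla\nabla\cdot u$.
\end{itemize}
The missing idea is to move the time derivative into the unknown. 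Estimate $Z=(\rho-\tilde\rho)-\nu^{-1}\nabla\cdot(\rho u)$, and $(\rho-1)-\nu^{-1}\nabla\cdot(\rho u)$ for the a priori bound. Then rewrite the cross term using $\nabla\cdot(\rho\nabla\phi)+\Delta\rho+\nabla\otimes\nabla:(\rho u\otimes u)=-\pa_tR-\nu R$ with $R=\nabla\cdot(\rho u)$. Only $\nu^{-1}\|R\|_{H^m}^2\lesssim\nu^{-1}(1+\calM_T)^2\|u\|_{H^{m+1}}^2$ survives, where $\calM_T$ is the bootstrap norm, and it integrates to $O(\nu^{-2})$.

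\textbf{Decay.} This step inherits the absorption error. You do not get $\frac{d}{dt}\mathcal E+\theta\mathcal E\le0$; at best you get $\frac{d}{dt}\mathcal E+\theta\mathcal E\le a(t)\mathcal E$ with $\int_0^\infty a\,dt\lesssim C_1$. That would still give decay once the free energy is added to control $\|u\|_{L^2}$. The paper instead obtains large-data $L^2$ decay at rate $c/\nu$ from the exact free-energy identity plus the cross term $\frac{\delta}{\nu}\int\rho u\cdot\nabla\phi\,dx$. It then interpolates so that $\|\nabla u\|_{L^\infty}$ and $\|\nabla\rho\|_{L^\infty}$ decay exponentially inside the top-order estimate.
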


\begin{remark} 
The large damping assumption in Theorem \ref{thm:main} should be understood as a quantitative perturbative condition for the nonlinear energy estimates. In the proof, the bootstrap quantity controls the $H^{m+1}$ size of $(\rho-1,u)$ together with the time-integrated velocity dissipation, and the a priori estimates are closed when this quantity is sufficiently small relative to the damping strength $\nu$.

Thus, the theorem allows arbitrary fixed smooth initial data with positive density, provided $\nu$ is chosen sufficiently large depending on the initial size and the density bounds. For fixed $\nu$, the same mechanism would yield a small-data result. We state the result in the large damping form because this is the regime in which the argument gives large-data global existence.
\end{remark}

\begin{remark} 
For $\nu\ge1$, the decay rate in Theorem \ref{thm:main} is of order $\nu^{-1}$. This is consistent with the overdamped dynamics. Indeed, in the large damping regime, the momentum equation formally reduces to the force balance
\[
 \nu\rho u\simeq -\nabla\rho-\rho\nabla\phi,
\]
or equivalently
\[
 u\simeq -\frac1\nu (\nabla\log\rho+\nabla\phi).
\]
Substituting this relation into the continuity equation gives
\[
 \partial_t\rho  \simeq  \frac1\nu\nabla\cdot(\nabla\rho+\rho\nabla\phi).
\]
Thus, the density evolves on the slow parabolic time scale
\[
 s=\frac{t}{\nu}.
\]
An order-one exponential relaxation in the slow time variable corresponds to the rate $e^{-ct/\nu}$ in the original time variable. This explains why the natural decay rate in the large damping regime is $\theta=c/\nu$, even though the velocity itself is directly damped at rate $\nu$.
\end{remark}

\begin{remark} 
The potential decay is an immediate consequence of the density decay. Since
\[
-\Delta\phi=\rho-1, \quad \intt \phi\,dx=0,
\]
by elliptic regularity we have
\[
\|\nabla\phi(t)\|_{H^{m+2}} \le C\|\rho(t)-1\|_{H^{m+1}}.
\]
Thus Theorem \ref{thm:main} also yields
\[
\|\nabla\phi(t)\|_{H^{m+2}} \le C e^{-\theta t}.
\]
\end{remark}

We now turn to the overdamped limit associated with the large damping regime. We work in the slow time variable
\[
s=\frac{t}{\nu}.
\]
For the global solution $(\rho^\nu,u^\nu,\phi^\nu)$ constructed in Theorem \ref{thm:main}, we define the slow-time density and potential by
\[
\rho_\nu(s,x):=\rho^\nu(\nu s,x), \quad \Phi_\nu(s,x):=\phi^\nu(\nu s,x),
\]
and the rescaled flux by
\[
J_\nu(s,x):=\nu\rho^\nu u^\nu(\nu s,x).
\]
Then $(\rho_\nu,J_\nu,\Phi_\nu)$ satisfies
\[
 \pa_s\rho_\nu+\nabla\cdot J_\nu=0, \quad -\Delta\Phi_\nu=\rho_\nu-1,
\]
and
\[
\frac1{\nu^2} \pa_sJ_\nu + \frac1{\nu^2}\nabla\cdot \lt( \frac{J_\nu\otimes J_\nu}{\rho_\nu} \rt) + \nabla\rho_\nu + \rho_\nu\nabla\Phi_\nu = -J_\nu.
\]
Formally, as $\nu\to\infty$, the flux is constrained by the drift-diffusion flux relation
\[
\bar J=-\nabla\bar\rho-\bar\rho\nabla\bar\Phi,
\]
which can also be viewed as a Darcy-type law with the self-consistent Poisson drift included. The limiting density solves
\[
 \pa_s\bar\rho+\nabla\cdot\bar J=0, \quad -\Delta\bar\Phi=\bar\rho-1,
\]
or equivalently
\[
 \pa_s\bar\rho -\nabla\cdot(\bar\rho\nabla\bar\Phi) = \Delta\bar\rho.
\]

Since the rescaled initial flux
\[
 J_\nu(0,x)=\nu\rho_0(x)u_0(x)
\]
is generally of order $O(\nu)$, the flux convergence will be stated after subtracting a fast initial layer.

\begin{theorem} \label{thm:main2}
Let the assumptions of Theorem \ref{thm:main} hold. Let $(\bar\rho,\bar J,\bar\Phi)$ be the unique smooth solution of
\[
 \pa_s\bar\rho+\nabla\cdot\bar J=0, \quad \bar J=-\nabla\bar\rho-\bar\rho\nabla\bar\Phi, \quad -\Delta\bar\Phi=\bar\rho-1, \quad \bar\rho(0)=\rho_0.
\]
Then, as $\nu\to\infty$, the slow-time density satisfies
\[
\rho_\nu\to\bar\rho \quad\text{strongly in}\quad L^\infty(0,\infty;H^m(\T^d)) \cap L^2(0,\infty;H^{m+1}(\T^d)).
\]
More precisely, there exists a constant $C>0$, independent of $\nu$, such
that
\bq\label{eq:od-den-err}
\sup_{s\ge0} \|\rho_\nu(s)-\bar\rho(s)\|_{H^m}^2 + \int_0^\infty \|\rho_\nu(s)-\bar\rho(s)\|_{H^{m+1}}^2\,ds \le \frac{C}{\nu^2}.
\eq
Moreover, let $J_L^\nu$ be the unique solution of the damped heat equation:
\[
\frac1{\nu^2} \lt( \pa_sJ_L^\nu-\Delta J_L^\nu \rt) + J_L^\nu = 0, \quad J_L^\nu(0,x)=\nu\rho_0(x)u_0(x).
\]
Then, after subtracting this initial layer, the rescaled flux converges to the drift-diffusion flux $\bar J$:
\[
J_\nu-J_L^\nu \to \bar J \quad\text{strongly in}\quad L^2(0,\infty;H^{m-1}(\T^d)).
\]
More precisely,
\bq\label{eq:od-flux-err}
\int_0^\infty \|J_\nu(s)-J_L^\nu(s)-\bar J(s)\|_{H^{m-1}}^2\,ds \le \frac{C}{\nu^2}.
\eq
\end{theorem}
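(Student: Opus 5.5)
The plan is a relative energy estimate in the slow variable, comparing $\rho_\nu$ with $\bar\rho$, using the uniform-in-$\nu$ bounds behind Theorem \ref{thm:main}. First I rewrite those bounds in slow time. The bootstrap quantity controls $\|\rho_\nu-1\|_{H^{m+1}}$ uniformly. Since $\nu u^\nu = J_\nu/\rho_\nu$, the time-integrated velocity dissipation $\nu\int_0^\infty\|u^\nu\|_{H^{m+1}}^2\,dt$ becomes $\int_0^\infty \nu^2\|u^\nu(\nu s)\|_{H^{m+1}}^2\,ds$, which is essentially $\int_0^\infty\|J_\nu\|_{H^{m+1}}^2\,ds$. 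This may be $O(1)$ only after the initial layer is removed, because $J_\nu(0)=O(\nu)$.

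For the limit system I will take for granted global smoothness and exponential decay of $\bar\rho$ in $H^{m+1}$ and higher norms. This follows from the standard drift-diffusion--Poisson theory for positive data with mass one, given that the data are bounded away from vacuum. Higher regularity of $\bar\rho$ may need $H^{m+2}$ or $H^{m+3}$; if the data only give $H^{m+1}$, I will use parabolic smoothing together with time weights near $s=0$.

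Next I introduce the corrected flux $w:=J_\nu-J_L^\nu+\nabla\rho_\nu+\rho_\nu\nabla\Phi_\nu$. Subtracting the equation for $J_L^\nu$ from the $J_\nu$ equation, $w$ satisfies
\[
\frac1{\nu^2}(\pa_s w-\Delta w)+w=\frac1{\nu^2}R_\nu .
\]
The remainder $R_\nu$ collects $\pa_s(\nabla\rho_\nu+\rho_\nu\nabla\Phi_\nu)$, the term $-\nabla\cdot(J_\nu\otimes J_\nu/\rho_\nu)$, $\Delta J_L^\nu$ coming from the mismatch, and $\Delta(\nabla\rho_\nu+\rho_\nu\nabla\Phi_\nu)$. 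The time derivative is re-expressed via $\pa_s\rho_\nu=-\nabla\cdot J_\nu$, so $R_\nu$ is bounded in $L^2_sH^{m-2}$ or $L^2_sH^{m-1}$ by the uniform bounds. The initial layer $J_L^\nu$ decays like $e^{-\nu^2 s}$ and satisfies $\int_0^\infty\|J_L^\nu\|_{H^k}^2\,ds\lesssim \nu^{-2}\|\nu\rho_0u_0\|_{H^k}^2=O(1)$. Testing the $w$ equation in $H^{m-1}$ then gives $\int_0^\infty\|w\|_{H^{m-1}}^2\,ds\le C\nu^{-2}$. This is essentially the flux estimate \eqref{eq:od-flux-err}, provided the density estimate holds.

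For the density error $\eta:=\rho_\nu-\bar\rho$ I write
\[
\pa_s\eta-\Delta\eta-\nabla\cdot(\eta\nabla\Phi_\nu+\bar\rho\nabla(\Phi_\nu-\bar\Phi))=\nabla\cdot(w-J_L^\nu).
\]
Then I run $H^m$ energy estimates. The drift terms are controlled via elliptic regularity $\|\nabla(\Phi_\nu-\bar\Phi)\|_{H^{m+1}}\lesssim\|\eta\|_{H^m}$, plus Grönwall. To make the bound uniform in time I use the coercivity of the linearized operator $\Delta-1$, which gives a spectral gap for the mean-zero $\eta$; alternatively I use the exponential decay of both solutions.

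The source $\nabla\cdot w$ costs $\|w\|_{H^m}$, one derivative more than the $H^{m-1}$ estimate. I will therefore aim to get $\int\|w\|_{H^m}^2\,ds\lesssim\nu^{-2}$ directly, using $H^{m+1}$ control of $\rho_\nu$ in the $w$ estimate. If that is short by one derivative, I fall back on a duality argument: I pair $\nabla\cdot w$ against $\eta$ in $H^m$, moving one derivative onto $\eta$ and absorbing it into the dissipation $\|\nabla\eta\|_{H^m}^2$.

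The initial layer term $\nabla\cdot J_L^\nu$ is the delicate piece and the main obstacle. Its $L^2_sH^m$ norm is $O(1)$, not $O(\nu^{-1})$. Its time integral, however, is small: $\int_0^\infty J_L^\nu\,ds=O(\nu^{-1})$ in every $H^k$. So I will solve $\pa_s\eta_L-\Delta\eta_L=-\nabla\cdot J_L^\nu$ explicitly, with $\eta_L(0)=0$, by Duhamel. Writing $J_L^\nu=-\nu^{-2}(\pa_s-\Delta)J_L^\nu$ gives $\eta_L=\nu^{-2}\nabla\cdot J_L^\nu(s)-\nu^{-2}\nabla\cdot e^{s\Delta}J_L^\nu(0)$, which is $O(\nu^{-1})$ in $L^\infty_sH^m\cap L^2_sH^{m+1}$ provided $u_0\in H^{m+1}$. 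I then estimate $\eta-\eta_L$ by the energy argument above, which yields \eqref{eq:od-den-err}. Finally, substituting this density bound into the $w$ estimate gives \eqref{eq:od-flux-err}, since $J_\nu-J_L^\nu-\bar J=w-\nabla\eta-\eta\nabla\Phi_\nu-\bar\rho\nabla(\Phi_\nu-\bar\Phi)$.
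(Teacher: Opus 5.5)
\textbf{There is a genuine gap: your argument does not reach the density estimate \eqref{eq:od-den-err}.} Your argument runs in the reverse order from the paper's, and the reversal is exactly where it breaks.

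The paper gets \eqref{eq:od-den-err} directly from the fast-time comparison estimate (Proposition~\ref{prop:comp-den}) for $Z=h-\tilde h-\nu^{-1}\nabla\cdot(\rho u)$. In slow time this is $\rho_\nu-\bar\rho-\nu^{-2}\nabla\cdot J_\nu$, and the correction absorbs the initial layer. So the flux defect (your $w$, the paper's $D_\nu$) is only ever needed in $L^2_sH^{m-1}$.

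You instead derive the density error from a parabolic equation forced by $\nabla\cdot w$. In the $H^m$ estimate for $\zeta=\eta-\eta_L$, this forcing contributes $\langle\partial^\alpha w,\nabla\partial^\alpha\zeta\rangle$ with $|\alpha|=m$. So you need $\int_0^\infty\|w\|_{H^m}^2\,ds\lesssim\nu^{-2}$, but your flux step only gives $H^{m-1}$.

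\emph{The duality fallback does not help.} Moving the divergence onto $\eta$ is precisely the computation that produces $\|w\|_{H^m}\|\nabla\zeta\|_{H^m}$. Moving one more derivative would need $\eta\in L^2_sH^{m+2}$. With $w\in L^2_sH^{m-1}$ only, your argument gives $\sup_s\|\eta\|_{H^{m-1}}^2+\int_0^\infty\|\eta\|_{H^m}^2\,ds\lesssim\nu^{-2}$. That is enough for \eqref{eq:od-flux-err}, but one derivative short of \eqref{eq:od-den-err}.

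\emph{Your $w$-estimate also cannot simply be run in $H^m$.} With $\nu^{-2}(\partial_s-\Delta)w$ on the left, the remainder is actually
$R_\nu=(\partial_s-\Delta)A_\nu-\mathcal N_\nu-\Delta J_\nu$, where $A_\nu=\nabla\rho_\nu+\rho_\nu\nabla\Phi_\nu$ and $\mathcal N_\nu=\nabla\cdot(J_\nu\otimes J_\nu/\rho_\nu)$. Note it is $\Delta J_\nu$, not $\Delta J_L^\nu$. The term $\Delta A_\nu$, tested in $H^m$, requires $\rho_\nu\in L^2_sH^{m+2}$, which Theorem~\ref{thm:main} does not provide.

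\textbf{How to fix it.} Drop the artificial Laplacian and keep the time derivative on $\widehat J_\nu=J_\nu-J_L^\nu$. Start from $\nu^{-2}\partial_s\widehat J_\nu+w=-\nu^{-2}(\mathcal N_\nu+\Delta J_L^\nu)$ and test with $w=\widehat J_\nu+A_\nu$ in $H^m$. Write
$\nu^{-2}\langle\partial_s\widehat J_\nu,w\rangle_{H^m}=\frac{1}{2\nu^2}\frac{d}{ds}\|\widehat J_\nu\|_{H^m}^2+\nu^{-2}\frac{d}{ds}\langle\widehat J_\nu,A_\nu\rangle_{H^m}-\nu^{-2}\langle\widehat J_\nu,\partial_sA_\nu\rangle_{H^m}$,
as the paper does in $H^{m-1}$. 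The terms are then controlled as follows.
\begin{itemize}
\item After one integration by parts, the last term is bounded by $\nu^{-2}\|\widehat J_\nu\|_{H^{m+1}}\|\partial_sA_\nu\|_{H^{m-1}}\lesssim\nu^{-2}\|\widehat J_\nu\|_{H^{m+1}}\|J_\nu\|_{H^{m+1}}$. This integrates to $O(\nu^{-2})$, since $\int_0^\infty(\|J_\nu\|_{H^{m+1}}^2+\|J_L^\nu\|_{H^{m+1}}^2)\,ds\le C$.
\item $\nu^{-4}\int\|\mathcal N_\nu\|_{H^m}^2\,ds\lesssim\nu^{-2}\sup_t\|u^\nu\|_{H^{m+1}}^2\int\|J_\nu\|_{H^{m+1}}^2\,ds$.
\item $\nu^{-4}\int\|\Delta J_L^\nu\|_{H^m}^2\,ds\lesssim\nu^{-2}\|\rho_0u_0\|_{H^{m+1}}^2$.
\item The endpoint cross term is controlled by $\sup_s\|A_\nu\|_{H^m}\lesssim\sup_s\|\rho_\nu-1\|_{H^{m+1}}$, and $\widehat J_\nu(0)=0$.
\end{itemize}
This yields $\int_0^\infty\|w\|_{H^m}^2\,ds\le C\nu^{-2}$.

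After that, your remaining steps close. Your Duhamel layer $\eta_L=\nu^{-2}\nabla\cdot\big(J_L^\nu(s)-e^{s\Delta}J_L^\nu(0)\big)$ is correct and is $O(\nu^{-1})$ in $L^\infty_sH^m\cap L^2_sH^{m+1}$, so the Gr\"onwall step goes through. For uniformity in $s$, use $\int_0^\infty(\|\rho_\nu-1\|_{H^m}^2+\|\bar\rho-1\|_{H^m}^2)\,ds\le C$ (or the exponential decay) as the Gr\"onwall coefficient. Do not rely on the spectral gap of $\Delta-1$, which does not dominate the large-data drift coefficients.

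\textbf{Minor points.}
\begin{itemize}
\item The right-hand side of your $\eta$-equation should be $-\nabla\cdot(w+J_L^\nu)$.
\item $\mathcal N_\nu$ is $O(\nu)$ in $L^2_sH^{m-1}$; only $\nu^{-2}\mathcal N_\nu$ is small. So $R_\nu$ is not uniformly bounded, although this does not affect your $H^{m-1}$ conclusion.
\end{itemize}
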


\begin{remark} 
The role of $J_L^\nu$ is to remove the large rescaled initial flux $J_\nu(0)=\nu\rho_0u_0$. Although the dominant initial relaxation is caused
by the fast damping, we define $J_L^\nu$ by a damped heat equation rather than by a purely damped ODE. This choice is suited to the Sobolev flux estimate, since the heat part is compatible with the parabolic structure of the overdamped limit and provides the regularization needed to control the initial layer in $H^{m-1}$. After subtracting this layer, the remaining flux satisfies a uniform defect estimate and converges strongly to the drift-diffusion flux.
\end{remark}

\begin{remark} 
The limiting drift-diffusion--Poisson system in Theorem \ref{thm:main2} is globally well posed in the smooth class considered here. This follows
from the analysis of the auxiliary drift-diffusion system in Section \ref{sec:aux-comp} below. After the slow-time change of variables $s=t/\nu$, that auxiliary system becomes exactly the limiting equation. Moreover, the maximum principle gives
\[
 \rho_1\le \bar\rho(s,x)\le \rho_2,\quad s\ge0,
\]
and the high-order Sobolev estimates then provide global smoothness.
\end{remark}

\begin{remark}
The overdamped limit can also be viewed from the perspective of modulated energy methods for large-friction limits \cite{ACC24,Cho21,CJe21,LT17}. In such an approach, one compares the Euler--Poisson solution with the drift-diffusion--Poisson solution by a relative free energy consisting of the isothermal internal energy and the Poisson interaction energy, together with a modulated kinetic energy. This naturally yields a low-order relaxation estimate and highlights the gradient-flow structure of the limiting equation.

In the present work, however, we use the auxiliary density comparison and a flux defect estimate. This gives a direct high-order density error estimate and, after subtracting the fast initial layer, a strong flux error estimate in Sobolev norms.
\end{remark}

%
%
%
%
%
\subsection{Ideas of the proof}
We briefly describe the main ingredients of the proof. Setting
\[
h:=\rho-1,
\]
we rewrite \eqref{eq:main} as
\begin{align}\label{eq:h-system}
\begin{aligned}
&\pa_t h+\nabla\cdot(hu)+\nabla\cdot u=0,\cr
&\pa_t u+u\cdot\nabla u+\frac{\nabla h}{1+h}+\nabla\phi=-\nu u,\cr
&-\Delta\phi=h.
\end{aligned}
\end{align}
The damping term directly controls the velocity, but not the density. Thus, one has to recover density dissipation through the coupling between the continuity equation, the pressure, and the Poisson force. Moreover, since the density fluctuation $\rho_0-1$ is not assumed to be small
in $L^\infty$, uniform pointwise bounds for $\rho$ cannot be closed by a purely perturbative argument around the constant state.

A key ingredient is the auxiliary drift-diffusion system
\[
\pa_t\tilde\rho -\frac1\nu\nabla\cdot(\tilde\rho\nabla\tilde\phi) = \frac1\nu\Delta\tilde\rho, \quad -\Delta\tilde\phi=\tilde\rho-1,
\]
with initial data 
\[
\tilde\rho(0,x)=\rho_0(x). 
\]
This system represents the parabolic dynamics formally obtained in the large damping regime. It also satisfies a maximum principle, which preserves the lower and upper bounds of the initial density. Thus, the auxiliary solution provides a natural reference density with good positivity properties.

The comparison between the Euler--Poisson density and the auxiliary density is not made directly through $h-\tilde h$, where $\tilde h:=\tilde\rho-1$. Instead, the correct comparison variable is
\[
h-\tilde h-\frac1\nu\nabla\cdot(\rho u).
\]
This correction is dictated by the momentum equation and reflects the parabolic structure hidden in the damped Euler--Poisson system. Indeed, after rewriting the density equation by means of the momentum equation, one obtains a drift-diffusion type equation for $h$ with an additional error involving $\pa_t(\rho u)$ and $\nabla\cdot(\rho u\otimes u)$. The correction term $\nu^{-1}\nabla\cdot(\rho u)$ removes this time derivative and leads to a dissipative estimate for the corrected difference, which in turn controls $h-\tilde h$.

The second main ingredient is a high-order hyperbolic energy estimate for \eqref{eq:h-system}. Since the isothermal pressure force is
\[
 \frac{\nabla h}{1+h},
\]
the top-order pressure term cannot be treated as a lower-order error. The main cancellation is obtained by testing the differentiated continuity
equation with
\[
 (1+h)^{-2}\partial^\alpha h.
\]
This produces a leading term that cancels exactly with the corresponding top-order pressure term in the differentiated velocity equation. Together with a related weighted estimate controlling the Poisson contribution at one lower derivative level, this weighted cancellation closes the $H^{m+1}$ estimate without loss of derivatives.

The exponential relaxation is obtained through a compensated energy argument. Although the velocity is directly damped, the density and the
Poisson field have no direct dissipation at the basic energy level. We add suitable cross terms involving the velocity, the density, and the Poisson potential, in the spirit of hypocoercivity estimates for damped Euler--Poisson dynamics. These terms transfer the velocity damping to the density and potential variables and yield a closed low-order Lyapunov inequality.  The resulting rate is of order $\nu^{-1}$ in the large damping regime. Combining the low-order decay with the uniform $H^{m+1}$ bound, we obtain decay estimates for intermediate Sobolev norms and the $W^{1,\infty}$ norms by interpolation. These estimates are then used in the top-order energy argument to derive exponential decay in $H^{m+1}$.

Finally, the overdamped limit is proved by combining the density comparison estimate with a flux defect estimate. In the slow time variable $s=t/\nu$, the auxiliary drift-diffusion density becomes exactly the solution of the limiting drift-diffusion--Poisson system. Thus, the comparison estimate between the Euler--Poisson density and the auxiliary density yields the quantitative convergence of $\rho_\nu$ to $\bar\rho$ in the Sobolev norms as stated in Theorem \ref{thm:main2}. The convergence of the flux requires an additional argument. The rescaled flux
\[
 J_\nu=\nu\rho^\nu u^\nu(\nu s)
\]
satisfies a relaxation equation whose leading-order constraint is the Darcy-type relation
\[
 J_\nu \simeq -\nabla\rho_\nu-\rho_\nu\nabla\Phi_\nu.
\]
However, the initial value $J_\nu(0)=\nu\rho_0u_0$ is in general of order $O(\nu)$ and creates a fast initial layer. We therefore subtract the
damped-heat layer $J_L^\nu$ and estimate the remaining flux defect. This yields strong convergence of $J_\nu-J_L^\nu$ to the drift-diffusion flux $\bar J$ in $L^2(0,\infty;H^{m-1}(\T^d))$.

%
%
%
%
%
\subsection{Organization of the paper}

The paper is organized as follows. Section \ref{sec:pre} collects the local well-posedness result, the basic energy estimate, and the functional inequalities used throughout the paper. In Section \ref{sec:apri}, we derive the main a priori estimates: a modified energy estimate for the density and a top-order hyperbolic estimate based on weighted cancellations. Section \ref{sec:aux-comp} introduces the auxiliary drift-diffusion density and proves the comparison estimate needed to close the pointwise density bounds. In Section \ref{sec:proof-main}, we close the bootstrap argument, prove global existence, and establish the exponential relaxation asserted in Theorem \ref{thm:main}. Finally, Section \ref{sec:overdamped-limit} proves the overdamped limit and the flux error estimate stated in Theorem \ref{thm:main2}.

%
%
%
%
%

\section{Preliminaries}\label{sec:pre}

In this section, we collect the standard well-posedness framework, the basic energy identity, and two functional inequalities used in the a priori estimates below.

%
%
%
%
%
\subsection{Local well-posedness and free energy}

We first recall the local theory and the free energy estimate for smooth solutions with strictly positive density.

\begin{theorem}\label{thm:lwp}
Let $m>d/2$ and assume that $(\rho_0,u_0)\in H^{m+1}(\T^d)\times H^{m+1}(\T^d)$, and
\[
\inf_{x\in\T^d}\rho_0(x)>0.
\]
Then there exists $T_*>0$ such that the isothermal Euler--Poisson system \eqref{eq:main}--\eqref{eq:initial} admits a unique smooth solution $(\rho,u)$ on $[0,T_*]$ satisfying
\[
\rho\in C([0,T_*];H^{m+1}(\T^d)),\quad u\in C([0,T_*];H^{m+1}(\T^d)),
\]
and
\[
\inf_{(t,x)\in[0,T_*]\times\T^d}\rho(t,x)>0.
\]
Moreover, the solution can be continued beyond $T_*$ as long as
\[
\sup_{0\le t<T_*} \lt( \|\rho(t)-1\|_{H^{m+1}}+\|u(t)\|_{H^{m+1}} \rt)<\infty
\]
and
\[
\inf_{0\le t<T_*}\inf_{x\in\T^d}\rho(t,x)>0.
\]
\end{theorem}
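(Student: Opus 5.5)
The plan is to recast \eqref{eq:main} as a quasilinear symmetric hyperbolic system and invoke the classical Kato--Lax--Majda theory \cite{Kat75,Lax73,Maj84}, treating the Poisson force as a nonlocal but smoothing lower-order term. Since $\rho$ stays positive, we work with $h=\rho-1$ as in \eqref{eq:h-system}. Equivalently, one may use the variables $(\log\rho, u)$, in which the isothermal system becomes
\[
\pa_t w + u\cdot\nabla w + \nabla\cdot u = 0,\quad \pa_t u + u\cdot\nabla u + \nabla w = -\nabla\phi - \nu u,\quad w=\log\rho .
\]
This system is symmetric with identity symmetrizer. In the $h$ variables a symmetrizer is $\mathrm{diag}((1+h)^{-2},(1+h)^{-1}I)$, which is positive definite as long as $\rho$ is bounded away from zero. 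The map $h\mapsto\nabla\phi=\nabla(-\Delta)^{-1}h$, with zero mean, is bounded from $H^{m+1}$ to $H^{m+2}$ because $\intt h\,dx=0$ is preserved by mass conservation. Hence $\nabla\phi$ is a Lipschitz perturbation at the $H^{m+1}$ level, and the damping term $-\nu u$ is linear and bounded.

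\textbf{Existence.} I would use a standard iteration. Given $(w^n,u^n)$ in a ball of $C([0,T];H^{m+1})$ with $\inf\rho^n\ge\rho_1/2$, solve the linear symmetric hyperbolic system with coefficients frozen at $(w^n,u^n)$ and source $-\nabla\phi^n-\nu u^n$.

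1. Energy estimates for $\partial^\alpha$, $|\alpha|\le m+1$, use commutator (Kato--Ponce) estimates valid for $m+1>d/2+1$. They give uniform bounds on a short time $T_*$ that depends on the initial norm, on $\nu$, and on $\inf\rho_0$.
2. Contraction in $C([0,T_*];L^2)$ then yields convergence.
3. Weak-star compactness combined with interpolation gives a limit in $L^\infty H^{m+1}\cap C H^{m}$.
4. Continuity in $H^{m+1}$ follows from the usual Bona--Smith or mollified-data argument.
5. Positivity of $\rho$ follows because $\rho$ is transported: $\rho(t,X(t))=\rho_0\exp(-\int_0^t\nabla\cdot u)$ along characteristics, and $\nabla\cdot u\in L^\infty$ by Sobolev embedding.

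\textbf{Uniqueness.} Uniqueness follows from an $L^2$ energy estimate for the difference of two solutions, using the symmetrizer and the elliptic bound $\|\nabla(\phi_1-\phi_2)\|_{L^2}\lesssim\|h_1-h_2\|_{L^2}$.

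\textbf{Continuation.} The continuation criterion comes from the fact that the local existence time depends only on $\|(\rho-1,u)\|_{H^{m+1}}$ and on $\inf\rho$. If both stay controlled on $[0,T_*)$, one restarts from a time close to $T_*$ with a uniform step and extends past $T_*$.

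The main obstacle is deriving the symmetrized energy estimate at top order with the nonlocal Poisson term and the $\rho$-dependent pressure coefficient. I expect it to be routine once the $\log\rho$ variables are used (or the weighted $(1+h)^{-2}$ test in the $h$ variables), since the top-order terms then cancel by symmetry. Since this is classical, in the paper one would simply cite \cite{Kat75,Maj84} rather than reproduce it.
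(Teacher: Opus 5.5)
Your proposal is correct and follows the same route as the paper, which does not prove this theorem but cites the classical Kato--Lax--Majda theory for symmetric hyperbolic systems with density bounded away from zero, together with elliptic regularity for the Poisson term. One small slip: in the $h$ variables a valid symmetrizer is $\mathrm{diag}\bigl((1+h)^{-2}, I\bigr)$ (equivalently $\mathrm{diag}\bigl((1+h)^{-1},(1+h)I\bigr)$), which is the weighting used in the paper's top-order estimate, whereas $\mathrm{diag}\bigl((1+h)^{-2},(1+h)^{-1}I\bigr)$ does not symmetrize the off-diagonal blocks; this does no harm, since your $\log\rho$ formulation with identity symmetrizer already suffices.
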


This is the standard continuation criterion for symmetric hyperbolic systems with strictly positive density, together with elliptic regularity for the Poisson equation; we omit the proof. We refer to \cite{CJe22, Guo98, IP13, JLZ14, Kat75, Lax73, Maj84} for related local well-posedness results.

We next provide the free energy estimate, which gives the zeroth-order velocity control used later with the top-order estimate.

\begin{lemma} \label{lem:be}
Let $(\rho,u,\phi)$ be a smooth solution to \eqref{eq:main}--\eqref{eq:initial} on $[0,T]$ satisfying
\[
0<\underline\rho\le \rho(t,x)\le \overline\rho<\infty.
\]
Then we have
\[
\sup_{0\le t\le T}\|u(t)\|_{L^2}^2 + \nu\int_0^T\|u(t)\|_{L^2}^2\,dt \le C \lt( \intt \rho_0|u_0|^2\,dx + \intt (\rho_0\log\rho_0-\rho_0+1)\,dx + \|\nabla\phi_0\|_{L^2}^2 \rt),
\]
where $C>0$ depends only on $\underline\rho$ and $\overline\rho$.
\end{lemma}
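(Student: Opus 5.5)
The estimate follows from the free energy identity for the damped isothermal Euler--Poisson system. We define the total energy
\[
\calE(t):=\intt \Big(\tfrac12\rho|u|^2 + \rho\log\rho-\rho+1\Big)\,dx + \tfrac12\|\nabla\phi\|_{L^2}^2,
\]
and aim to show
\[
\frac{d}{dt}\calE(t) + \nu\intt \rho|u|^2\,dx = 0.
\]
For the kinetic part, we test the momentum equation with $u$ and use the continuity equation. This gives
\[
\frac{d}{dt}\intt\tfrac12\rho|u|^2\,dx + \intt u\cdot\nabla\rho\,dx + \intt\rho u\cdot\nabla\phi\,dx = -\nu\intt\rho|u|^2\,dx,
\]
since the convective terms combine, via $\pa_t\rho+\nabla\cdot(\rho u)=0$, into a total derivative that integrates to zero on $\T^d$.

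For the internal energy we use $\frac{d}{dt}(\rho\log\rho-\rho+1)=\log\rho\,\pa_t\rho$. Then
\[
\frac{d}{dt}\intt(\rho\log\rho-\rho+1)\,dx=-\intt\log\rho\,\nabla\cdot(\rho u)\,dx=\intt \rho u\cdot\nabla\log\rho\,dx=\intt u\cdot\nabla\rho\,dx .
\]
For the Poisson energy, we differentiate the Poisson equation in time, $-\Delta\pa_t\phi=\pa_t\rho=-\nabla\cdot(\rho u)$, and integrate by parts:
\[
\frac{d}{dt}\tfrac12\|\nabla\phi\|_{L^2}^2=\intt\phi\,(-\Delta\pa_t\phi)\,dx=-\intt\phi\,\nabla\cdot(\rho u)\,dx=\intt\rho u\cdot\nabla\phi\,dx.
\]
Adding the three identities, the pressure and Poisson cross terms cancel, which yields the claimed energy equality. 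All integrations by parts are justified because the solution is smooth on the torus and $\rho$ is bounded away from zero, so $\log\rho$ is smooth.

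We integrate the identity over $[0,t]$ and note that every term in $\calE$ is nonnegative, since $\rho\log\rho-\rho+1\ge0$. This gives
\[
\sup_{0\le t\le T}\intt\rho|u|^2\,dx + 2\nu\int_0^T\intt\rho|u|^2\,dx\,dt\le C\,\calE(0),
\]
where $\calE(0)$ is exactly the expression on the right-hand side of the lemma, up to constant factors. Finally, the assumption $\rho\ge\underline\rho$ gives $\|u\|_{L^2}^2\le\underline\rho^{-1}\intt\rho|u|^2\,dx$, which completes the proof with $C$ depending only on $\underline\rho$. The upper bound $\overline\rho$ is not needed here.

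There is no genuine obstacle in this argument. The only point needing care is the sign bookkeeping that makes the pressure terms and the Poisson terms cancel in pairs.
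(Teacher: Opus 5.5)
Your proof is correct and follows the same route as the paper: you establish the free energy identity $\frac{d}{dt}\calE+\nu\intt\rho|u|^2\,dx=0$, integrate it in time, and then use the lower density bound to pass from $\intt\rho|u|^2\,dx$ to $\|u\|_{L^2}^2$. You supply the computation that the paper calls ``the usual energy identity,'' and your remark that the upper bound $\overline\rho$ is not actually needed is accurate.
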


\begin{proof}
Define
\[
\calE(t) := \frac12\intt \rho |u|^2\,dx + \intt (\rho\log\rho-\rho+1)\,dx + \frac12\intt |\nabla\phi|^2\,dx.
\]
The usual energy identity gives
\[
\frac{d}{dt}\calE(t) + \nu\intt \rho |u|^2\,dx =0.
\]
Integrating in time and using the lower and upper bounds on $\rho$, we obtain the desired estimate.
\end{proof}

%
%
%
%
%
\subsection{Functional inequalities}

We collect two functional inequalities used repeatedly below.  

We first recall the following homogeneous Moser-type composition estimate, see \cite{KM81, Maj84}.

\begin{lemma} \label{lem:pow-est} 
Let $r>d/2$ be an integer, and let $h\in H^r(\T^d)$ satisfy
\[
0<\underline\rho\le 1+h(x)\le \overline\rho, \quad x\in\T^d.
\]
Then, for any $\ell\in\N$, we have
\[
\|\nabla^r(1+h)^{-\ell}\|_{L^2} \le C \lt(1+\|h\|_{H^r}\rt)^{r-1} \|\nabla^r h\|_{L^2}.
\]
Here $C>0$ depends only on $d,r,\ell$ and $\underline\rho$.
\end{lemma}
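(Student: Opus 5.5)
To prove Lemma \ref{lem:pow-est}, I would write $F(y):=(1+y)^{-\ell}$, which is smooth on the interval $[\underline\rho-1,\overline\rho-1]$ where $h$ takes its values. Note that on $\{y:1+y\ge\underline\rho\}$ all derivatives satisfy $|F^{(k)}(y)|\le C(k,\ell)\underline\rho^{-\ell-k}$. So only the lower bound on $1+h$ enters the constant, consistent with the statement.

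The first step is to expand $\partial^\alpha F(h)$ with $|\alpha|=r$ by the multivariate chain rule (Faà di Bruno). This gives
\[
\partial^\alpha F(h)=\sum_{k=1}^{r} F^{(k)}(h)\sum_{\substack{\beta_1+\cdots+\beta_k=\alpha\\ |\beta_j|\ge1}} c_{\beta}\,\partial^{\beta_1}h\cdots\partial^{\beta_k}h .
\]
The $k=1$ term is bounded directly by $C\underline\rho^{-\ell-1}\|\nabla^r h\|_{L^2}$. For $k\ge2$ I would bound $F^{(k)}(h)$ in $L^\infty$ and estimate the product $\prod_j\partial^{\beta_j}h$ in $L^2$ by Hölder with exponents $p_j=2r/|\beta_j|$, so that $\sum 1/p_j=1/2$. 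This is followed by the Gagliardo--Nirenberg inequality on the torus applied to the mean-zero functions $\nabla h$:
\[
\|\nabla^{|\beta_j|}h\|_{L^{2r/|\beta_j|}}\le C\|h\|_{L^\infty}^{1-|\beta_j|/r}\|\nabla^r h\|_{L^2}^{|\beta_j|/r}.
\]
Multiplying over $j$ gives $C\|h\|_{L^\infty}^{k-1}\|\nabla^rh\|_{L^2}$. The bound $\|h\|_{L^\infty}\le \max(|\underline\rho-1|,|\overline\rho-1|)$ is harmless, but to match the stated form I would instead use the Sobolev embedding $\|h\|_{L^\infty}\le C\|h\|_{H^r}$ (valid since $r>d/2$). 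Since $k-1\le r-1$, summing over $k$ yields
\[
\|\nabla^r F(h)\|_{L^2}\le C(1+\|h\|_{H^r})^{r-1}\|\nabla^r h\|_{L^2}.
\]

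The main delicate point is the torus version of Gagliardo--Nirenberg without a lower-order term. Because every factor $\partial^{\beta_j}h$ carries at least one derivative, the estimate only involves $\nabla h$, which has zero mean. Moreover, the interpolation between $L^\infty$ at level zero and $\dot H^r$ holds on $\T^d$ for derivatives of periodic functions, since one may subtract the mean of $h$ without changing any derivative. If one prefers to avoid this, an alternative is to bound $\|\partial^{\beta_j}h\|_{L^{p_j}}$ by $\|h\|_{H^r}$ via Sobolev embedding for all factors except the one of highest order. One keeps that factor in $L^2$ when $|\beta_j|=r$, and otherwise distributes the exponents using $r>d/2$. In every case the number of $\|h\|_{H^r}$ factors is at most $r-1$. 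The only place where $d$ enters is the embedding constants.
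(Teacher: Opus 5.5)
Your proof is correct. It is the classical Moser-type argument behind the references (Klainerman--Majda, Majda) that the paper cites in place of a proof: Fa\`a di Bruno, H\"older with exponents $2r/|\beta_j|$, and the $L^\infty$--$\dot H^r$ Gagliardo--Nirenberg interpolation, which holds on $\T^d$ after subtracting the mean of $h$. Bounding $\|h\|_{L^\infty}$ by Sobolev embedding rather than by $\overline\rho$ is the right choice, since it keeps the constant independent of $\overline\rho$ as the statement requires.
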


The next lemma concerns the nonlinear Poisson drift term. It isolates a commutator structure that appears both in the modified density estimate for the Euler--Poisson system and in the Sobolev estimates for the auxiliary drift-diffusion equation.

\begin{lemma} \label{lem:po-drift}
Let $2\le d\le4$ and let $k\ge1$ be an integer. Let $f\in H^{k+1}(\T^d)$ satisfy
\[
\intt  f\,dx=0,
\]
and let $\Phi$ be the zero-mean solution of
\[
-\Delta\Phi=f \quad\text{on }\T^d.
\]
Then, for any multi-index $\alpha$ with $|\alpha|=k$,
\bq\label{eq:p-drift}
\lt| \intt  \pa^\alpha f\, \pa^\alpha\nabla\cdot(f\nabla\Phi)\,dx\rt| \le C \|\nabla^k f\|_{L^2}^2 \|\nabla^{k+1}f\|_{L^2}.
\eq
Here $C>0$ depends only on $d$ and $k$.
\end{lemma}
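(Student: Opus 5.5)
We expand $\nabla\cdot(f\nabla\Phi)=\nabla f\cdot\nabla\Phi+f\Delta\Phi=\nabla f\cdot\nabla\Phi-f^2$ and apply $\pa^\alpha$ with the Leibniz rule. This gives
\[
\pa^\alpha\nabla\cdot(f\nabla\Phi)=\nabla\pa^\alpha f\cdot\nabla\Phi+\sum_{0<\beta\le\alpha}\binom{\alpha}{\beta}\pa^{\alpha-\beta}\nabla f\cdot\pa^\beta\nabla\Phi-\pa^\alpha(f^2).
\]
The only term with $k+1$ derivatives on $f$ next to an undifferentiated coefficient is the transport term $\nabla\pa^\alpha f\cdot\nabla\Phi$. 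We handle it by integrating by parts:
\[
\intt \pa^\alpha f\,\nabla\pa^\alpha f\cdot\nabla\Phi\,dx=-\frac12\intt|\pa^\alpha f|^2\Delta\Phi\,dx=\frac12\intt|\pa^\alpha f|^2 f\,dx .
\]
The remaining terms are commutator and product terms, in which the derivatives are distributed between $f$ and $\nabla\Phi$. By elliptic regularity on the torus we have $\|\nabla^{j+1}\Phi\|_{L^2}\le C\|\nabla^{j-1}f\|_{L^2}$ for $j\ge1$. Hence $\nabla\Phi$ is one derivative smoother than $f$, and the commutator terms have total derivative count $k+1$ on copies of $f$, up to the Riesz-type operator.

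All terms are now cubic in $f$ and are paired against $\pa^\alpha f$. The bound we want, $\|\nabla^kf\|_{L^2}^2\|\nabla^{k+1}f\|_{L^2}$, is homogeneous only in homogeneous norms. Since $f$ has zero mean, Poincaré gives $\|\nabla^jf\|_{L^2}\le C\|\nabla^kf\|_{L^2}$ for $j\le k$, and the same holds in $L^p$ through Sobolev embedding. So we may trade lower derivatives freely upward, but never above $k+1$.

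For a typical term $\intt \pa^\alpha f\,\pa^{\alpha-\beta}\nabla f\,\pa^\beta\nabla\Phi$ with $|\beta|=j\ge1$, Hölder's inequality gives the bound
\[
\|\pa^\alpha f\|_{L^2}\,\|\nabla^{k+1-j}f\|_{L^{p}}\,\|\nabla^{j+1}\Phi\|_{L^{q}},\qquad \tfrac1p+\tfrac1q=\tfrac12 .
\]
Likewise $\intt|\pa^\alpha f|^2f$ is bounded by $\|\pa^\alpha f\|_{L^2}\|\pa^\alpha f\|_{L^{p}}\|f\|_{L^q}$, and the terms of $\pa^\alpha(f^2)$ are treated the same way. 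In each case we choose $p,q$ and use Gagliardo--Nirenberg and Sobolev embedding, so that one factor is placed in $\|\nabla^{k+1}f\|_{L^2}$ and the other in $\|\nabla^kf\|_{L^2}$. The restriction $d\le4$ is what makes this placement possible.

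For example, for $\intt|\pa^\alpha f|^2f$ we take $p=q=4$. The factor $\|\pa^\alpha f\|_{L^4}$ is bounded by $\|\nabla^{k+1}f\|_{L^2}$ via $\dot H^1\subset L^4$, valid for $d\le4$. The factor $\|f\|_{L^4}$ is bounded by $\|\nabla f\|_{L^2}\le C\|\nabla^kf\|_{L^2}$, using $k\ge1$ and zero mean.

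For the commutator terms we split by $j$. When $j=1$, we put $\nabla^2\Phi$ in $L^4$, bounded by $\|\nabla f\|_{L^2}$, and $\nabla^{k}f$ in $L^4$, bounded by $\|\nabla^{k+1}f\|_{L^2}$. When $j\ge2$, we put $\nabla^{j+1}\Phi$ in $L^2$, bounded by $\|\nabla^{j-1}f\|_{L^2}$, and the lower factor $\nabla^{k+1-j}f$ in $L^\infty$ or $L^4$ as needed.

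The main obstacle is this bookkeeping near the endpoints, specifically the cases $j=k$ and small $k$ with $d=4$. There, $L^\infty$ embeddings fail, and one must instead use the $L^4$–$L^4$ pairing together with $\|\nabla^{j+1}\Phi\|_{L^4}\le C\|\nabla^{j}f\|_{L^2}$. For each $(j,k,d)$ we will check that the derivative counts fit, with every index in $\{0,\dots,k+1\}$, exactly one factor at level $k+1$, and the rest bounded by $\|\nabla^kf\|_{L^2}$ through Poincaré.
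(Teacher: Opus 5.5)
Your proposal is correct and follows essentially the paper's route: isolate the transport term $\nabla\pa^\alpha f\cdot\nabla\Phi$, integrate by parts using $-\Delta\Phi=f$ to produce the cubic term $\intt f|\pa^\alpha f|^2\,dx$, and bound the remaining Leibniz and commutator terms by H\"older, Sobolev/Gagliardo--Nirenberg, elliptic regularity and Poincar\'e. The main difference is the H\"older split. The paper uses $L^{\frac{2d}{d-1}}\times L^{\frac{2d}{d-1}}\times L^d$. Your $L^2\times L^4\times L^4$ split relies on $\|g\|_{L^4}\le C\|\nabla g\|_{L^2}$ for zero-mean $g$ when $d\le4$, and on $\|\nabla^{j+1}\Phi\|_{L^4}\le C\|\nabla^j f\|_{L^2}$. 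This split in fact works uniformly for every $1\le j\le k$ and every term of $\pa^\alpha(f^2)$. It gives derivative levels $(k,\,k+2-j,\,j)$ and $(k,\,i+1,\,k+1-i)$ respectively, with at most one equal to $k+1$, so the endpoint case analysis you anticipate is unnecessary.
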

\begin{proof}
We write
\[
\nabla\cdot(f\nabla\Phi) = \nabla f\cdot\nabla\Phi+f\Delta\Phi.
\]
Thus, we get
\[
 \pa^\alpha\nabla\cdot(f\nabla\Phi)  =  \pa^\alpha\nabla f\cdot\nabla\Phi + f \pa^\alpha\Delta\Phi + \calC_\alpha,
\]
where
\[
\calC_\alpha := \lt[ \pa^\alpha(\nabla f\cdot\nabla\Phi) -  \pa^\alpha\nabla f\cdot\nabla\Phi \rt] + \lt[  \pa^\alpha(f\Delta\Phi) - f  \pa^\alpha\Delta\Phi \rt].
\]
Using $-\Delta\Phi=f$, the two leading terms give
\[
\intt \pa^\alpha f \lt( \pa^\alpha\nabla f\cdot\nabla\Phi + f  \pa^\alpha\Delta\Phi \rt)dx = -\frac12 \intt  f| \pa^\alpha f|^2\,dx.
\]
By H\"older's inequality, we find
\[
\lt| \intt  f| \pa^\alpha f|^2\,dx \rt| \le \|f\|_{L^d} \| \pa^\alpha f\|_{L^{\frac{2d}{d-1}}}^2.
\]
Using the Gagliardo--Nirenberg inequality
\[
\|g\|_{L^{\frac{2d}{d-1}}} \le C \|\nabla g\|_{L^2}^{1/2} \|g\|_{L^2}^{1/2}, \quad 2\le d\le4,
\]
and the Sobolev--Poincar\'e inequality
\[
\|f\|_{L^d} \le C\|f\|_{\dot H^{\frac d2-1}} \le C\|\nabla f\|_{L^2} \le C\|\nabla^k f\|_{L^2},
\]
we obtain
\[
\lt| \intt  f| \pa^\alpha f|^2\,dx \rt| \le C \|\nabla^k f\|_{L^2}^2 \|\nabla^{k+1}f\|_{L^2}.
\]

It remains to estimate the commutator term. By Leibniz rule, every term in $\calC_\alpha$ contains at least one derivative falling on $f$ and at least one derivative falling on the
Poisson factor. More precisely, after writing the derivatives of $\Delta\Phi$ as derivatives of $\Phi$, all terms are of the form
\[
\nabla^p f\,\nabla^{k+2-p}\Phi, \quad 1\le p\le k.
\]
This implies
\[
\lt| \intt \pa^\alpha f\,\calC_\alpha\,dx \rt| \le C \sum_{p=1}^k \| \pa^\alpha f\|_{L^{\frac{2d}{d-1}}} \|\nabla^p f\|_{L^{\frac{2d}{d-1}}} \|\nabla^{k+2-p}\Phi\|_{L^d}.
\]
By the Gagliardo--Nirenberg inequality and elliptic regularity,
\[
\| \pa^\alpha f\|_{L^{\frac{2d}{d-1}}} \le C \|\nabla^{k+1}f\|_{L^2}^{1/2} \|\nabla^k f\|_{L^2}^{1/2}, \quad  \|\nabla^p f\|_{L^{\frac{2d}{d-1}}} \le C \|\nabla^{p+1}f\|_{L^2}^{1/2} \|\nabla^p f\|_{L^2}^{1/2},
\]
and
\[
\|\nabla^{k+2-p}\Phi\|_{L^d} \le C \|\nabla^{k+3-p}\Phi\|_{L^2} \le C \|\nabla^{k+1-p}f\|_{L^2}.
\]
Interpolating between $\|\nabla^k f\|_{L^2}$ and $\|\nabla^{k+1}f\|_{L^2}$, together with Poincar\'e's inequality, gives
\[
\|\nabla^{p+1}f\|_{L^2}^{1/2} \|\nabla^p f\|_{L^2}^{1/2} \|\nabla^{k+1-p}f\|_{L^2} \le C \|\nabla^k f\|_{L^2}^{3/2} \|\nabla^{k+1}f\|_{L^2}^{1/2}.
\]
Hence, we have
\[
\lt| \intt \pa^\alpha f\,\calC_\alpha\,dx \rt| \le C \|\nabla^k f\|_{L^2}^2 \|\nabla^{k+1}f\|_{L^2}.
\]
Combining the estimate of the leading contribution with the above commutator estimate yields \eqref{eq:p-drift}. This completes the proof.
\end{proof}

%
%
%
%
%

\section{A priori estimates}\label{sec:apri}

In this section, we derive the a priori estimates for smooth solutions to the Euler--Poisson system in the large damping regime. We write
\[
\rho=1+h,
\]
so that the system takes the form
\begin{align*}
&\pa_t h+\nabla\cdot((1+h)u)=0,\cr
&\pa_t u+u\cdot\nabla u+\frac{\nabla h}{1+h}+\nabla\phi=-\nu u,\cr
&-\Delta\phi=h.
\end{align*}

Throughout this section, we assume that a smooth solution exists on $[0,T]$ and satisfies the density bootstrap bound
\bq\label{eq:den-boot}
\frac{\rho_1}{2}\le 1+h(t,x)\le 2\rho_2, \quad (t,x)\in[0,T]\times\T^d.
\eq
We also use the bootstrap quantity
\bq\label{eq:MT}
\calM_T := \sup_{0\le t\le T} \lt( \|h(t)\|_{H^{m+1}}+\|u(t)\|_{H^{m+1}} \rt) + \lt( \nu\int_0^T\|u(t)\|_{H^{m+1}}^2\,dt \rt)^{\frac12 }.
\eq
The large damping bootstrap condition is
\bq\label{eq:MTs}
\frac{\calM_T(1+\calM_T)}{\nu}\le \delta_0,
\eq
where $\delta_0>0$ will be chosen sufficiently small.

The estimates below have two different roles. First, we derive a modified energy estimate for the density, which reveals the parabolic dissipation hidden in the damped Euler--Poisson system. Second, we derive a top-order hyperbolic estimate based on weighted cancellations between the continuity equation, the pressure term, and the Poisson force.

%
%
%
%
%

\subsection{Modified energy estimate for the density}

The density equation does not contain direct damping. To recover density dissipation, we use a modified density variable
\bq\label{eq:barh}
\bar h := h-\frac1\nu\nabla\cdot((1+h)u).
\eq
This modification removes the time derivative of $(1+h)u$ which appears when the density equation is rewritten through the momentum equation. The resulting estimate provides the lower-order density control needed for the top-order energy estimate.
 
We now use the modified variable $\bar h$ to derive the density estimate. The nonlinear Poisson drift term will be controlled by Lemma \ref{lem:po-drift}. The proof consists of a zeroth-order estimate followed by an induction on the order of Sobolev norms.

\begin{proposition} \label{prop:mod-den}
Let $2\le d\le4$ and let $m\in\N$ satisfy $m>d/2$. Under the assumptions \eqref{eq:den-boot} and \eqref{eq:MTs}, there exists a constant $C_1>0$, depending
only on $d,m,\rho_1,\rho_2,\|h_0\|_{H^{m+1}}$ and $\|u_0\|_{H^{m+1}}$, but independent of $T$ and $\nu$, such that
\[
\sup_{0\le t\le T}\|h(t)\|_{H^m}^2 + \frac1\nu\int_0^T \|h(t)\|_{H^{m+1}}^2\,dt \le C_1.
\]
\end{proposition}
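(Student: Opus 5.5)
Let $G:=(1+h)u$ denote the flux and $\bar h=h-\nu^{-1}\nabla\cdot G$ as in \eqref{eq:barh}. The plan is to derive a parabolic equation for $\bar h$ with small source terms, prove an $L^2$ estimate for it, and then raise the order by induction.

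\textbf{Step 1: the parabolic equation.} Rewrite the momentum equation in conservative form,
\[
\nu G=-\pa_t G-\nabla\cdot(G\otimes u)-\nabla h-(1+h)\nabla\phi .
\]
Insert this into $\pa_t h=-\nabla\cdot G$. Moving the $\pa_t\nabla\cdot G$ term to the left gives
\[
\pa_t\bar h-\frac1\nu\Delta h-\frac1\nu\nabla\cdot((1+h)\nabla\phi)=\frac1\nu\nabla\cdot\nabla\cdot(G\otimes u).
\]
We then replace $h$ by $\bar h+\nu^{-1}\nabla\cdot G$ in the linear terms, using $-\Delta\phi=h$, so that $\nabla\cdot\nabla\phi=-h$. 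This yields
\[
\pa_t\bar h-\frac1\nu\Delta\bar h+\frac1\nu\bar h-\frac1\nu\nabla\cdot(h\nabla\phi)=\calR,
\]
where $\calR$ collects terms of the form $\nu^{-2}\Delta\nabla\cdot G$, $\nu^{-2}\nabla\cdot G$ and $\nu^{-1}\nabla\cdot\nabla\cdot(G\otimes u)$.

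\textbf{Step 2: zeroth-order and lower-order estimates.} For the $L^2$ level I would not test this equation directly, because the drift term $h\nabla\phi$ is large. Instead I would use the free energy of Lemma \ref{lem:be}, which controls $\|h\|_{L^2}$ uniformly, since $\rho\log\rho-\rho+1\sim h^2$ under \eqref{eq:den-boot}. For the dissipation I would test the equation with $\log\rho+\phi$, or equivalently work with $\calE$ augmented by the cross term $\nu^{-1}\intt G\cdot\nabla(\log\rho+\phi)\,dx$. This should give the time integral $\nu^{-1}\int_0^T\|\nabla h\|_{L^2}^2\,dt$ at order zero.

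\textbf{Step 3: induction on the order.} For $1\le k\le m$, apply $\pa^\alpha$ with $|\alpha|=k$ and test with $\pa^\alpha\bar h$. This produces
\[
\frac12\frac{d}{dt}\|\pa^\alpha\bar h\|^2+\frac1\nu\|\nabla\pa^\alpha\bar h\|^2+\frac1\nu\|\pa^\alpha\bar h\|^2 .
\]
The drift term is handled by Lemma \ref{lem:po-drift} with $f=h$, giving the bound
\[
\frac{C}{\nu}\|\nabla^k h\|^2\|\nabla^{k+1}h\|\le \frac{\epsilon}{\nu}\|\nabla^{k+1}h\|^2+\frac{C_\epsilon}{\nu}\|\nabla^kh\|^4 .
\]
The quartic term is not small, since $h$ is large. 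I expect to handle it using the inductive bound $\sup_t\|\nabla^k h\|^2\le C$ together with $\nu^{-1}\int_0^T\|\nabla^k h\|^2\,dt\le C$ from the previous level. Here one must be careful to index the induction so that level $k$ controls $\sup_t\|\nabla^{k}h\|$ while level $k-1$ controls the time integral. Gronwall then gives $\sup_t\|\nabla^kh\|^2\le C\|\nabla^kh_0\|^2 e^{C}$, and the resulting constant depends only on the data, as the statement allows.

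\textbf{Step 4: passing from $\bar h$ back to $h$.} The mismatch $\nu^{-1}\nabla\cdot G$ costs
\[
\nu^{-1}\|G\|_{H^{m+1}}\lesssim \nu^{-1}\calM_T(1+\calM_T)\le\delta_0
\]
pointwise in time, by \eqref{eq:MTs}. In time-integrated form the cost is $\nu^{-3}\int_0^T\|G\|_{H^{m+2}}^2\,dt$. That would need $m+2$ derivatives, so at top order I would use only $\|\nabla\pa^\alpha\bar h\|$ with $|\alpha|\le m$ and convert it to $\|h\|_{H^{m+1}}$ up to $\nu^{-1}\|G\|_{H^{m+1}}$.

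\textbf{Error terms.} The remainder $\calR$ contributes terms such as
\[
\nu^{-2}\int\pa^\alpha\nabla\cdot(G\otimes u)\,\pa^\alpha\nabla\bar h\,dx,
\]
bounded by $\nu^{-1}\epsilon\|\nabla\pa^\alpha\bar h\|^2+\nu^{-3}\calM_T^2\|u\|_{H^{m+1}}^2$. Its time integral is at most $\nu^{-4}\calM_T^4\le\delta_0^2$.

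\textbf{Main obstacle.} The hardest part will be the top level $k=m$. There $\Delta\nabla\cdot G$ would need $m+3$ derivatives, so the $\nu^{-2}$ terms must be moved across by integration by parts onto $\nabla\pa^\alpha\bar h$. Even then they require $\|G\|_{H^{m+1}}$ and $\nu\int_0^T\|u\|_{H^{m+1}}^2\,dt\le\calM_T^2$, combined with \eqref{eq:MTs}, to be bounded by $\delta_0$-small quantities.
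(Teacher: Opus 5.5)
Your scheme can be made to work below the top order, but it breaks at $k=m$. That is exactly the level that must produce $\frac1\nu\int_0^T\|h\|_{H^{m+1}}^2\,dt$.

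\textbf{The derivative loss at $k=m$.} Once you write $\Delta h=\Delta\bar h+\nu^{-1}\Delta\nabla\cdot G$ and test with $\pa^\alpha\bar h$, $|\alpha|=m$, the remainder contains
\[
\nu^{-2}\intt\pa^\alpha\bar h\,\pa^\alpha\Delta\nabla\cdot G\,dx=-\nu^{-2}\intt\nabla\pa^\alpha\bar h\cdot\nabla\pa^\alpha\nabla\cdot G\,dx .
\]
This carries $m+2$ derivatives of $G=(1+h)u$, not $m+1$ as you assert in your ``main obstacle'' paragraph. The same miscount appears elsewhere:
\begin{itemize}
\item In Step 4, $\nabla\pa^\alpha\bar h-\nabla\pa^\alpha h=-\nu^{-1}\nabla\pa^\alpha\nabla\cdot G$ costs $\nu^{-1}\|G\|_{\dot H^{m+2}}$, not $\nu^{-1}\|G\|_{H^{m+1}}$.
\item Pairing the piece $-\nu^{-1}\pa^\alpha\nabla\cdot G$ of your test function with $\nu^{-1}\pa^\alpha\nabla\cdot\nabla\cdot(G\otimes u)$ also needs $m+2$ derivatives.
\end{itemize}
Nothing in $\calM_T$ controls $H^{m+2}$. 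In fact, for $(h,u)\in H^{m+1}$ one only has $\nabla^{m+1}\bar h\in H^{-1}$ in general. So the dissipation $\frac1\nu\|\nabla\pa^\alpha\bar h\|_{L^2}^2$ you rely on at top order is not a bootstrap-controlled quantity, and the estimate does not close.

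\textbf{The missing idea.} Never express the diffusion through $\bar h$. Set $F:=\nabla\cdot((1+h)\nabla\phi)+\Delta h+\nabla\otimes\nabla:(G\otimes u)$ and $R=\nabla\cdot G$. Then
\[
\frac12\frac{d}{dt}\|\pa^\alpha\bar h\|_{L^2}^2=\frac1\nu\intt\pa^\alpha\bar h\,\pa^\alpha F\,dx .
\]
Split the test function as $\pa^\alpha h-\nu^{-1}\pa^\alpha R$. On the $R$-part, use the divergence of the momentum equation in the exact form $F=-\pa_tR-\nu R$. This turns $-\nu^{-2}\intt\pa^\alpha R\,\pa^\alpha F\,dx$ into $\frac12\frac{d}{dt}\|\nu^{-1}\pa^\alpha R\|_{L^2}^2+\nu^{-1}\|\pa^\alpha R\|_{L^2}^2$. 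The result is the modified energy $\|\pa^\alpha h\|_{L^2}^2-\frac2\nu\intt\pa^\alpha h\,\pa^\alpha R\,dx$, with dissipation $\frac1\nu\|\pa^\alpha h\|_{H^1}^2$ directly in $h$.

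Every remaining term then involves at most $m+1$ derivatives:
\begin{itemize}
\item the bad-sign term $\nu^{-1}\|\pa^\alpha R\|_{L^2}^2\lesssim\nu^{-1}(1+\calM_T)^2\|u\|_{H^{m+1}}^2$;
\item the convective term tested against $\pa^\alpha h$, after splitting off the nonpositive $-\nu^{-1}\intt(u\cdot\nabla\pa^\alpha h)^2\,dx$;
\item the drift term, to which Lemma \ref{lem:po-drift} now applies verbatim because it is tested against $\pa^\alpha h$ rather than $\pa^\alpha\bar h$.
\end{itemize}
Their time integrals are $O\big((1+\calM_T)^2\calM_T^2/\nu^2\big)=O(\delta_0^2)$ by \eqref{eq:MTs}, and your Gronwall induction then goes through as planned.

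\textbf{Zeroth order.} Your route via the entropy plus the cross term $\nu^{-1}\intt G\cdot\nabla(\log\rho+\phi)\,dx$ is a valid alternative. It works because $\intt\rho|\nabla(\log\rho+\phi)|^2dx=\intt\rho^{-1}|\nabla\rho|^2dx+2\|h\|_{L^2}^2+\intt\rho|\nabla\phi|^2dx$. Your worry about the drift at that level is unfounded, though: $\frac1\nu\|h\|_{L^2}^2-\frac1\nu\intt h\nabla\cdot(h\nabla\phi)\,dx=\frac1\nu\intt(1+\frac{h}{2})h^2dx\ge\frac1{2\nu}\|h\|_{L^2}^2$. This is why the paper can use the same modified energy at order zero.
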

 
\begin{proof}
We first rewrite the density equation by using the momentum equation. Since
\[
\nu(1+h)u = -\pa_t((1+h)u) -\nabla\cdot((1+h)u\otimes u) -\nabla h -(1+h)\nabla\phi,
\]
taking divergence and using
\[
\pa_t h=-\nabla\cdot((1+h)u)
\]
gives
\[
\pa_t h - \frac1\nu\nabla\cdot((1+h)\nabla\phi) = \frac1\nu\Delta h + \frac1\nu\nabla\cdot \lt[ \pa_t((1+h)u) + \nabla\cdot((1+h)u\otimes u) \rt].
\]
Thus, with $\bar h$ defined by \eqref{eq:barh}, we obtain
\bq\label{eq:barh-eq}
\pa_t\bar h = \frac1\nu\nabla\cdot((1+h)\nabla\phi) + \frac1\nu\Delta h + \frac1\nu\nabla\otimes\nabla:((1+h)u\otimes u).
\eq
Since $-\Delta\phi=h$, we have
\[
\nabla\cdot((1+h)\nabla\phi) = -h+\nabla\cdot(h\nabla\phi).
\]
We also set
\[
R:=\nabla\cdot((1+h)u).
\]

\medskip

\noindent{\bf Step 1. Zeroth-order estimate.} We first prove the estimate at order zero. Since
\[
\bar h=h-\frac1\nu R,
\]
we compute, using \eqref{eq:barh-eq},
\begin{align*}
\frac12\frac{d}{dt}\|\bar h\|_{L^2}^2
&= \frac1\nu \intt h \lt[ \nabla\cdot((1+h)\nabla\phi) + \Delta h + \nabla\otimes\nabla:((1+h)u\otimes u) \rt]dx \cr
&\quad -\frac1{\nu^2} \intt R\lt[ \nabla\cdot((1+h)\nabla\phi) + \Delta h + \nabla\otimes\nabla:((1+h)u\otimes u) \rt]dx .
\end{align*}
For the second term, the momentum equation gives
\[
\nabla\cdot((1+h)\nabla\phi) + \Delta h + \nabla\otimes\nabla:((1+h)u\otimes u) = -\pa_t R-\nu R.
\]
Thus,
\[
-\frac1{\nu^2} \intt R\lt[ \nabla\cdot((1+h)\nabla\phi) + \Delta h + \nabla\otimes\nabla:((1+h)u\otimes u) \rt]dx = \frac12\frac{d}{dt} \lt\|\frac1\nu R\rt\|_{L^2}^2 + \frac1\nu\|R\|_{L^2}^2.
\]
Hence, the modified zero-order energy
\[
\calE_0^h := \|h\|_{L^2}^2 - \frac2\nu \intt h\,R\,dx
\]
satisfies
\bq\label{eq:E0-id}
\frac12\frac{d}{dt}\calE_0^h + \frac1\nu\|h\|_{L^2}^2 + \frac1\nu\|\nabla h\|_{L^2}^2 = {\mathsf I}_1^0 + {\mathsf I}_2^0 + {\mathsf I}_3^0,
\eq
where
\[
{\mathsf I}_1^0 := \frac1\nu \intt h\,\nabla\cdot(h\nabla\phi)\,dx, \quad {\mathsf I}_2^0 := \frac1\nu \intt h\,\nabla\otimes\nabla:((1+h)u\otimes u)\,dx,
\]
and
\[
{\mathsf I}_3^0 := \frac1\nu\|R\|_{L^2}^2.
\]

We first treat ${\mathsf I}_1^0$. Since $-\Delta\phi=h$, integration by parts gives
\[
{\mathsf I}_1^0 = -\frac1{2\nu} \intt h^3\,dx.
\]
This gives
\[
\frac1\nu\|h\|_{L^2}^2 - {\mathsf I}_1^0 = \frac1\nu \intt \lt(1+\frac h2\rt)h^2\,dx.
\]
Since $1+h=\rho\ge \rho_1/2$ under the density bootstrap,
\[
1+\frac h2=\frac{1+\rho}{2}\ge \frac12.
\]
Hence, we obtain
\bq\label{eq:I10}
\frac1\nu\|h\|_{L^2}^2 - {\mathsf I}_1^0 \ge \frac1{2\nu}\|h\|_{L^2}^2.
\eq

We next estimate ${\mathsf I}_2^0 + {\mathsf I}_3^0$. Integrating by parts once,
\[
{\mathsf I}_2^0 = -\frac1\nu \sum_{i,j=1}^d \intt   \pa_i h\,  \pa_j\lt((1+h)u_i u_j\rt)\,dx = -\frac1\nu \intt  (u\cdot\nabla h)^2\,dx +\calR_0^u,
\]
where
\[
\calR_0^u := -\frac1\nu \sum_{i,j=1}^d \intt   \pa_i h \lt[  \pa_j\lt((1+h)u_i u_j\rt) -u_i u_j \pa_jh \rt]dx =  -\frac1\nu \sum_{i,j=1}^d \intt (1+h) \pa_i h  \pa_j (u_i u_j) dx.
\]
By the product estimate, Sobolev embedding, and Young's inequality,
\bq\label{eq:R0u}
|\calR_0^u| \le \frac{C}{\nu}\|\nabla h\|_{L^2}\|1+h\|_{L^\infty}\|\nabla (u \otimes u)\|_{L^2} \le \frac{c}{4\nu}\|\nabla h\|_{L^2}^2 + \frac{C(1+\calM_T)^2}{\nu}\|u\|_{H^{m+1}}^2.
\eq
On the other hand,
\[
R = \nabla\cdot((1+h)u) = u\cdot\nabla h+(1+h)\nabla\cdot u,
\]
and thus
\[
{\mathsf I}_3^0 -\frac1\nu\intt (u\cdot\nabla h)^2\,dx = \frac1\nu \lt( \|R\|_{L^2}^2 - \|u\cdot\nabla h\|_{L^2}^2 \rt)  = \frac2\nu \intt  (u\cdot\nabla h)(1+h)\nabla\cdot u\,dx + \frac1\nu \|(1+h)\nabla\cdot u\|_{L^2}^2 .
\]
Using the density bootstrap, Sobolev embedding, and Young's inequality, we
obtain
\begin{align}\label{eq:I30-l}
\begin{aligned}
\lt| {\mathsf I}_3^0 -\frac1\nu\intt(u\cdot\nabla h)^2\,dx \rt| &\le \frac{1}{4\nu}\|\nabla h\|_{L^2}^2 + \frac{C}{\nu}\|u \nabla u\|_{L^2}^2 + \frac1\nu \|1+h\|_{L^\infty}\|\nabla u\|_{L^2}^2\cr
&\le \frac{1}{4\nu}\|\nabla h\|_{L^2}^2 + \frac{C(1+\calM_T)^2}{\nu} \|u\|_{H^{m+1}}^2 
\end{aligned}
\end{align}
Combining \eqref{eq:R0u} with \eqref{eq:I30-l} gives
\bq\label{eq:I20-I30}
{\mathsf I}_2^0 + {\mathsf I}_3^0 \le \frac{1}{2\nu}\|\nabla h\|_{L^2}^2 + \frac{C(1+\calM_T)^2}{\nu} \|u\|_{H^{m+1}}^2.
\eq
Collecting \eqref{eq:E0-id}, \eqref{eq:I10}, and \eqref{eq:I20-I30}, we obtain
\bq\label{eq:E0-diff}
\frac{d}{dt}\calE_0^h  + \frac{1}{2\nu}\|h\|_{H^1}^2 \le \frac{C(1+\calM_T)^2}{\nu}
\|u\|_{H^{m+1}}^2.
\eq

We compare $\calE_0^h$ with $\|h\|_{L^2}^2$. By Young's inequality and the product estimate,
\[
\lt| \frac2\nu \intt h\,R\,dx \rt| \le \frac12\|h\|_{L^2}^2 + \frac{C}{\nu^2}\|R\|_{L^2}^2 \le \frac12\|h\|_{L^2}^2 + \frac{C(1+\calM_T)^2\calM_T^2}{\nu^2}.
\]
This yields
\bq\label{eq:E0-equi}
\|h\|_{L^2}^2 \le 2\calE_0^h + \frac{C(1+\calM_T)^2\calM_T^2}{\nu^2}.
\eq
Integrating \eqref{eq:E0-diff} over $[0,t]$ and using
\[
\int_0^t\|u(s)\|_{H^{m+1}}^2\,ds \le \frac{\calM_T^2}{\nu},
\]
we find
\[
\calE_0^h(t) + \frac1\nu \int_0^t \|h(s)\|_{H^1}^2\,ds \le \calE_0^h(0) + \frac{C(1+\calM_T)^2\calM_T^2}{\nu^2}.
\]
Moreover,
\[
\calE_0^h(0) \le C\|h_0\|_{L^2}^2 + \frac{C(1+\|h_0\|_{H^1})^2\|u_0\|_{H^1}^2}{\nu^2}.
\]
Using \eqref{eq:E0-equi}, we therefore obtain
\[
\sup_{0\le t\le T}\|h(t)\|_{L^2}^2 + \frac1\nu \int_0^T \|h(t)\|_{H^1}^2\,dt \le C\|h_0\|_{L^2}^2 + \frac{C(1+\|h_0\|_{H^1})^2\|u_0\|_{H^1}^2}{\nu^2} + \frac{C(1+\calM_T)^2\calM_T^2}{\nu^2}.
\]
Under the large-damping bootstrap \eqref{eq:MTs}, the last term is bounded by $\delta_0^2$. Therefore, choosing $\delta_0>0$ fixed and sufficiently small, we may define
\[
C_0^h := C\lt( 1+\|h_0\|_{L^2}^2 + (1+\|h_0\|_{H^1})^2\|u_0\|_{H^1}^2 \rt),
\]
where $C$ depends only on $d,m,\rho_1,\rho_2$. Then
\bq\label{eq:den-base}
\sup_{0\le t\le T}\|h(t)\|_{L^2}^2 + \frac1\nu \int_0^T \|h(t)\|_{H^1}^2\,dt \le C_0^h.
\eq

\medskip

\noindent{\bf Step 2. Induction step.} Assume that, for some $1\le k\le m$, the estimate
\bq\label{eq:den-hyp}
\sup_{0\le t\le T}\|h(t)\|_{H^{k-1}}^2 + \frac1\nu \int_0^T \|h(t)\|_{H^k}^2\,dt \le C_{k-1}^h
\eq
holds. For $k=1$, this is exactly the base estimate \eqref{eq:den-base}. We prove the estimate at order $k$.

Let $|\alpha|=k$. From
\[
\pa^\alpha\bar h = \pa^\alpha h-\frac1\nu\pa^\alpha R,
\]
we estimate, as above,
\begin{align*}
\frac12\frac{d}{dt}\|\pa^\alpha\bar h\|_{L^2}^2 
&= \frac1\nu \intt \pa^\alpha h\, \pa^\alpha \lt[ \nabla\cdot((1+h)\nabla\phi) +\Delta h +\nabla\otimes\nabla:((1+h)u\otimes u) \rt]dx \cr
&\quad -\frac1{\nu^2} \intt \pa^\alpha R\, \pa^\alpha \lt[ \nabla\cdot((1+h)\nabla\phi) +\Delta h +\nabla\otimes\nabla:((1+h)u\otimes u) \rt]dx.
\end{align*}
Using again
\[
\nabla\cdot((1+h)\nabla\phi)+\Delta h +\nabla\otimes\nabla:((1+h)u\otimes u) = -\pa_t R-\nu R,
\]
we obtain
\bq\label{eq:alp-id}
\frac12\frac{d}{dt} \lt( \|\pa^\alpha\bar h\|_{L^2}^2 - \lt\| \frac1\nu\pa^\alpha R\rt\|_{L^2}^2 \rt) + \frac1\nu\|\pa^\alpha h\|_{H^1}^2 = {\mathsf I}_1^\alpha+{\mathsf I}_2^\alpha+{\mathsf I}_3^\alpha,
\eq
where
\[
{\mathsf I}_1^\alpha := \frac1\nu \intt \pa^\alpha h\, \pa^\alpha\nabla\cdot(h\nabla\phi)\,dx, \quad {\mathsf I}_2^\alpha := \frac1\nu \intt \pa^\alpha h\, \pa^\alpha\nabla\otimes\nabla:((1+h)u\otimes u)\,dx,
\]
and
\[
{\mathsf I}_3^\alpha := \frac1\nu\|\pa^\alpha R\|_{L^2}^2.
\]
Since
\[
\|\pa^\alpha\bar h\|_{L^2}^2 - \lt\| \frac1\nu\pa^\alpha R \rt\|_{L^2}^2 = \|\pa^\alpha h\|_{L^2}^2 - \frac2\nu \intt \pa^\alpha h\,\pa^\alpha R\,dx,
\]
this gives the modified density energy at order $k$:
\[
\calE_k^h := \|\nabla^k h\|_{L^2}^2 -\frac2\nu \sum_{|\alpha|=k} \intt \pa^\alpha h\, \pa^\alpha R\,dx.
\]

We now estimate the right-hand side.  For ${\mathsf I}_1^\alpha$, applying Lemma \ref{lem:po-drift} with $f=h$ and $\Phi=\phi$, we obtain
\bq\label{eq:I1-est}
|{\mathsf I}_1^\alpha| \le \frac{C}{\nu} \|\nabla^k h\|_{L^2}^2 \|\nabla^{k+1}h\|_{L^2}.
\eq
For ${\mathsf I}_2^\alpha$, integrating by parts once gives
\[
{\mathsf I}_2^\alpha = -\frac1\nu \sum_{i,j=1}^d \intt \pa_i\pa^\alpha h\, \pa^\alpha\pa_j\lt((1+h)u_i u_j\rt) dx.
\]
Splitting off the highest derivative falling on $h$,
\[
\pa^\alpha\pa_j\lt((1+h)u_i u_j\rt) = u_i u_j\,\pa_j\pa^\alpha h + \calR_{\alpha,ij}^{u},
\]
where
\[
\calR_{\alpha,ij}^{u} := \pa^\alpha\pa_j\lt((1+h)u_i u_j\rt) - u_i u_j\,\pa_j\pa^\alpha h.
\]
Then
\begin{align*}
{\mathsf I}_2^\alpha &= -\frac1\nu \sum_{i,j=1}^d \intt u_i u_j\, \pa_i\pa^\alpha h\, \pa_j\pa^\alpha h\,dx -\frac1\nu \sum_{i,j=1}^d \intt \pa_i\pa^\alpha h\, \calR_{\alpha,ij}^{u}\,dx \cr
&= -\frac1\nu \intt (u\cdot\nabla\pa^\alpha h)^2\,dx -\frac1\nu \sum_{i,j=1}^d \intt \pa_i\pa^\alpha h\, \calR_{\alpha,ij}^{u}\,dx.
\end{align*}
The first term is nonpositive and will be discarded.

For the remainder, using the product estimate and Sobolev embedding,
\[
\|u\|_{L^\infty} + \|u\|_{W^{1,\infty}} \le C\|u\|_{H^{m+1}}, \quad \|h\|_{L^\infty}\le C\|h\|_{H^m},
\]
we estimate
\begin{align*}
\|\calR_{\alpha,ij}^{u}\|_{L^2} &\le C\|1+h\|_{L^\infty}\|\nabla^{k+1}(u_i u_j)\|_{L^2} + C\|\nabla^k h\|_{L^2}\|\nabla(u_i u_j)\|_{L^\infty} \cr
&\le C \lt( 1+\|h\|_{L^\infty}+\|\nabla^k h\|_{L^2} \rt) \lt( \|u\|_{H^{k+1}}+\|u\|_{W^{1,\infty}} \rt) \|u\|_{W^{1,\infty}}\cr
&\le C\lt( 1+\|h\|_{H^m}\rt)\|u\|_{H^{m+1}}^2.
\end{align*}
Hence, we have
\bq\label{eq:I2-est}
{\mathsf I}_2^\alpha \le \frac{C}{\nu} \lt( 1+\|h\|_{H^m}  \rt) \|\nabla^{k+1}h\|_{L^2} \|u\|_{H^{m+1}}^2.
\eq
  
Finally, using the product estimate and the Sobolev inequality, we get
\begin{align*}
\|\pa^\alpha\nabla\cdot((1+h)u)\|_{L^2} &\le C\lt( \|\nabla^{k+1}h\|_{L^2}\|u\|_{L^\infty} + (1+\|h\|_{L^\infty})\|u\|_{H^{k+1}} \rt) \cr
&\le C\lt( 1+\|h\|_{H^m}+\|\nabla^{k+1} h\|_{L^2} \rt)\|u\|_{H^{m+1}},
\end{align*}
and thus
\bq\label{eq:I3-est}
{\mathsf I}_3^\alpha \le \frac{C}{\nu} \lt( 1+\|h\|_{H^m}+\|\nabla^{k+1} h\|_{L^2} \rt)^2\|u\|_{H^{m+1}}^2.
\eq

Collecting \eqref{eq:alp-id} with \eqref{eq:I1-est}, \eqref{eq:I2-est}, and \eqref{eq:I3-est}, and using the bootstrap bound, we obtain
\bq\label{est_h_high}
\frac{d}{dt}\calE_k^h + \frac{2}{\nu}\|\nabla^k h\|_{H^1}^2 \le \frac{C}{\nu}\|\nabla^k h\|_{L^2}^2\|\nabla^{k+1} h\|_{L^2} + \frac{C(1+\calM_T)^2}{\nu} \|u\|_{H^{m+1}}^2,
\eq
where we used 
\[
(1+\|h\|_{H^m})\|\nabla^{k+1}h\|_{L^2} + \lt(1+\|h\|_{H^m}+\|\nabla^{k+1}h\|_{L^2}\rt)^2 \le C(1+\calM_T)^2.
\]
By Young's inequality, we get

\[
\frac{d}{dt}\calE_k^h + \frac{1}{\nu}\|\nabla^k h\|_{H^1}^2 \le \frac{C}{\nu}\|\nabla^k h\|_{L^2}^4 + \frac{C(1+\calM_T)^2}{\nu} \|u\|_{H^{m+1}}^2.
\]

Similarly as before, we compare $\calE_k^h$ and $\|\nabla^k h\|_{L^2}^2$. Since
\bq\label{eq:Ek-equi}
\lt| \frac2\nu \sum_{|\alpha|=k} \intt \pa^\alpha h\, \pa^\alpha R\,dx \rt| \le \frac12\|\nabla^k h\|_{L^2}^2 + \frac{C}{\nu^2}\|R\|_{H^k}^2 \le \frac12\|\nabla^k h\|_{L^2}^2 + \frac{C(1+\calM_T)^2\calM_T^2}{\nu^2},
\eq
we get
\bq\label{eq:h-Ek}
\|\nabla^k h\|_{L^2}^2 \le 2\calE_k^h + \frac{C(1+\calM_T)^2\calM_T^2}{\nu^2}.
\eq
Using \eqref{eq:h-Ek}, we also find
\[
\|\nabla^k h\|_{L^2}^4 \le C\|\nabla^k h\|_{L^2}^2\calE_k^h + \frac{C(1+\calM_T)^2\calM_T^2}{\nu^2} \|\nabla^k h\|_{L^2}^2.
\]
Hence, we obtain
\bq\label{eq:Ek-diff-i}
\frac{d}{dt}\calE_k^h + \frac{1}{2\nu}\|\nabla^k h\|_{H^1}^2 \le \frac{C}{\nu} \|\nabla^k h\|_{L^2}^2\calE_k^h + \frac{C(1+\calM_T)^2\calM_T^2}{\nu^3} \|\nabla^k h\|_{L^2}^2 +\frac{C(1+\calM_T)^2}{\nu} \|u\|_{H^{m+1}}^2.
\eq
Applying Gronwall's lemma to \eqref{eq:Ek-diff-i}, we obtain
\begin{align}\label{eq:Ek-gron}
\begin{aligned}
&\calE_k^h(t) + \frac1\nu \int_0^t \|\nabla^k h(s)\|_{H^1}^2\,ds \cr
&\quad \le e^{C C_{k-1}^h} \lt( \calE_k^h(0) + \frac{C(1+\calM_T)^2\calM_T^2}{\nu^3} \int_0^t \|\nabla^k h(s)\|_{L^2}^2\,ds + \frac{C(1+\calM_T)^2}{\nu} \int_0^t \|u(s)\|_{H^{m+1}}^2\,ds \rt) \cr
&\quad \le e^{C C_{k-1}^h} \lt( \calE_k^h(0) + \frac{C(1+\calM_T)^2\calM_T^2 C_{k-1}^h}{\nu^2} + \frac{C(1+\calM_T)^2\calM_T^2}{\nu^2} \rt),
\end{aligned}
\end{align}
where we used the induction hypothesis,
\[
\frac1\nu \int_0^T \|\nabla^k h(s)\|_{L^2}^2\,ds \le C_{k-1}^h
\]
and the bootstrap bound,
\[
\int_0^t\|u(s)\|_{H^{m+1}}^2\,ds \le \frac{\calM_T^2}{\nu}.
\]
The initial modified energy satisfies
\bq\label{eq:initial-Ek}
\calE_k^h(0) \le C\|h_0\|_{H^k}^2 + \frac{C}{\nu^2}\|R(0)\|_{H^k}^2 \le
C\|h_0\|_{H^k}^2 + \frac{C(1+\|h_0\|_{H^{m+1}})^2 \|u_0\|_{H^{m+1}}^2}{\nu^2}.
\eq
Combining \eqref{eq:Ek-gron}, \eqref{eq:initial-Ek}, \eqref{eq:Ek-equi}, and \eqref{eq:den-hyp}, we
conclude that
\begin{align*}
&\sup_{0\le t\le T}\|h(t)\|_{H^k}^2 + \frac1\nu \int_0^T \|h(t)\|_{H^{k+1}}^2\,dt \cr
&\quad \le C_{k-1}^h + e^{C C_{k-1}^h} \lt(  C\|h_0\|_{H^k}^2 + \frac{C(1+\|h_0\|_{H^{m+1}})^2 \|u_0\|_{H^{m+1}}^2}{\nu^2} + \frac{C(1+\calM_T)^2\calM_T^2 (1+C_{k-1}^h)}{\nu^2}\rt).
\end{align*}
Under the large-damping bootstrap \eqref{eq:MTs}, choosing $\delta_0>0$ small enough, the last term is bounded by a constant depending only on $C_{k-1}^h$ and the initial data. Thus, we may define
\[
C_k^h := C_{k-1}^h + e^{C C_{k-1}^h}\lt( 1+ C_{k-1}^h  +\|h_0\|_{H^k}^2 + (1+\|h_0\|_{H^{m+1}})^2 \|u_0\|_{H^{m+1}}^2 \rt),
\]
Hence, we have
\[
\sup_{0\le t\le T}\|h(t)\|_{H^k}^2 + \frac1\nu\int_0^T\|h(t)\|_{H^{k+1}}^2\,dt \le C_k^h.
\]
The constant $C_k^h$ depends only on $d,m,\rho_1,\rho_2$, the initial data, and $C_{k-1}^h$, and is independent of $T$ and $\nu$.

Taking $k=m$, we obtain
\[
\sup_{0\le t\le T}\|h(t)\|_{H^m}^2 + \frac1\nu \int_0^T \|h(t)\|_{H^{m+1}}^2\,dt \le C_m^h.
\]
Setting $C_1:=C_m^h$ completes the proof.
\end{proof}

%
%
%
%
%

\subsection{Top-order hyperbolic estimate}
We now derive the top-order estimate. The main difficulty is that the pressure force
\[
\frac{\nabla h}{1+h}
\]
contains one full derivative of the density. The estimate is closed by using two weighted cancellations: the order-$m$ density estimate weighted by $(1+h)^{-1}$ cancels the Poisson force, while the order-$(m+1)$ density estimate weighted by $(1+h)^{-2}$ cancels the leading pressure term in the velocity equation.

We shall use the density estimate obtained in Proposition \ref{prop:mod-den}: there exists a constant $C_1>0$, independent of $T$ and $\nu$, such that
\bq\label{eq:C1-den-bdd}
\sup_{0\le t\le T}\|h(t)\|_{H^m}^2 + \frac1\nu\int_0^T\|h(t)\|_{H^{m+1}}^2\,dt \le C_1.
\eq

\begin{proposition} \label{prop:top-order}
Let $2\le d\le4$ and let $m\in\N$ satisfy $m>d/2$. Assume that \eqref{eq:den-boot}, \eqref{eq:MTs}, and \eqref{eq:C1-den-bdd} hold. Then, there exists a constant $C>0$ depending only on $d,m,\rho_1,\rho_2$ such that 
\[
\sup_{0\le t\le T} \lt( \|h(t)\|_{H^{m+1}}^2+\|u(t)\|_{H^{m+1}}^2 \rt) + \nu\int_0^T\|u(t)\|_{H^{m+1}}^2\,dt \le e^{CC_1}\lt( \|h_0\|_{H^{m+1}}^2+\|u_0\|_{H^{m+1}}^2 \rt).
\]
\end{proposition}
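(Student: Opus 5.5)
The plan is a weighted $H^{m+1}$ energy estimate for \eqref{eq:h-system}. The damping $\nu\|u\|_{H^{m+1}}^2$ will absorb every velocity contribution. Every remaining density contribution will carry a coefficient whose time integral is controlled by $\frac1\nu\int_0^T\|h\|_{H^{m+1}}^2\,dt\le C_1$ from \eqref{eq:C1-den-bdd}, and Gronwall then yields the factor $e^{CC_1}$.

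\textbf{Setting up the weighted energy.} For each multi-index $\beta$ with $|\beta|\le m+1$, apply $\pa^\beta$ to both equations:
\[
\pa_t\pa^\beta h+u\cdot\nabla\pa^\beta h+(1+h)\nabla\cdot\pa^\beta u=\calC^h_\beta,
\]
\[
\pa_t\pa^\beta u+u\cdot\nabla\pa^\beta u+\frac{\nabla\pa^\beta h}{1+h}+\nabla\pa^\beta\phi+\nu\pa^\beta u=\calC^u_\beta .
\]
Here $\calC^h_\beta,\calC^u_\beta$ are the Leibniz commutators; for $\calC^u_\beta$ this includes $\pa^\beta(\nabla h/(1+h))-\nabla\pa^\beta h/(1+h)$. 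Test the velocity equation with $\pa^\beta u$ and the density equation with $(1+h)^{-2}\pa^\beta h$. The top-order pressure terms combine to
\[
\intt \frac{\pa^\beta h\,\nabla\cdot\pa^\beta u+\pa^\beta u\cdot\nabla\pa^\beta h}{1+h}\,dx=-\intt \pa^\beta h\,\pa^\beta u\cdot\nabla(1+h)^{-1}\,dx,
\]
and this is of lower order. The time derivative of the weight $(1+h)^{-2}$ and the transport terms $u\cdot\nabla$ produce, after integrating by parts, terms bounded by $C(\|\nabla u\|_{L^\infty}+\|\nabla h\|_{L^\infty}\|u\|_{L^\infty})$ times the energy. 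The energy used is
\[
\calE_{m+1}:=\sum_{|\beta|\le m+1}\intt\lt((1+h)^{-2}|\pa^\beta h|^2+|\pa^\beta u|^2\rt)dx,
\]
which is equivalent to $\|h\|_{H^{m+1}}^2+\|u\|_{H^{m+1}}^2$ by \eqref{eq:den-boot}. The Moser estimate (Lemma \ref{lem:pow-est}) handles the pressure commutator and the derivatives of the weight.

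\textbf{Poisson term.} For the Poisson term $\intt\pa^\beta u\cdot\nabla\pa^\beta\phi\,dx$ I would first try Young's inequality,
\[
\le\frac\nu4\|\pa^\beta u\|_{L^2}^2+\frac C\nu\|h\|_{H^m}^2,
\]
using $\|\nabla\pa^\beta\phi\|_{L^2}\le C\|h\|_{H^{m}}$. The problem is that $\frac1\nu\|h\|_{H^m}^2$ is only an inhomogeneous forcing with integral at most $C_1$, which would not give the homogeneous right-hand side $e^{CC_1}(\|h_0\|_{H^{m+1}}^2+\|u_0\|_{H^{m+1}}^2)$. I would therefore follow the cancellation instead. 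Integrate by parts to get $-\intt\nabla\cdot\pa^\beta u\,\pa^\beta\phi\,dx$, then substitute $\nabla\cdot\pa^\beta u=(1+h)^{-1}(-\pa_t\pa^\beta h-u\cdot\nabla\pa^\beta h+\calC^h_\beta)$, i.e.\ use the density equation at one lower effective level weighted by $(1+h)^{-1}$. Since $-\Delta\pa^\beta\phi=\pa^\beta h$, the leading part becomes $\frac12\frac{d}{dt}\intt|\nabla\pa^\beta\phi|^2dx$ plus commutators containing $\nabla h$ and $\pa_th$. The term $\frac12\|\nabla\pa^\beta\phi\|_{L^2}^2$ is added to $\calE_{m+1}$, and the remainders are again bounded by (coefficient)$\times\calE_{m+1}$.

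\textbf{Gronwall and the main obstacle.} Collecting terms gives
\[
\frac{d}{dt}\calE_{m+1}+\nu\|u\|_{H^{m+1}}^2\le C\lt(\|u\|_{H^{m+1}}(1+\|h\|_{H^{m+1}})+\|u\|_{H^{m+1}}^2\rt)\calE_{m+1}^{1/2}\|u\|_{H^{m+1}}+\dots.
\]
The main obstacle is to arrange every error as either $\frac\nu2\|u\|_{H^{m+1}}^2$ or $a(t)\,\calE_{m+1}$ with $\int_0^Ta(t)\,dt\le C C_1$. Typical terms are $\|\nabla h\|_{L^\infty}\|\pa^\beta h\|\|\pa^\beta u\|$ and $\|\pa^\beta h\|^2\|\nabla u\|_{L^\infty}$. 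They are handled by Young's inequality, $\le\frac\nu8\|u\|_{H^{m+1}}^2+\frac C\nu\|h\|_{H^{m+1}}^2\calE_{m+1}$, together with $\|\nabla u\|_{L^\infty}\le\frac\nu8+\frac C\nu\|u\|_{H^{m+1}}^2$. Cubic terms are controlled through \eqref{eq:MTs}, since $\calM_T/\nu\le\delta_0$.

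Granting this, Gronwall gives
\[
\calE_{m+1}(t)+\nu\int_0^t\|u\|_{H^{m+1}}^2\,ds\le\exp\Big(\frac C\nu\int_0^T\|h\|_{H^{m+1}}^2\,ds+C\delta_0\Big)\,\calE_{m+1}(0)\le e^{CC_1}\calE_{m+1}(0).
\]
Converting back to unweighted norms via \eqref{eq:den-boot} gives the stated bound.
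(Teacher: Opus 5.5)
Your route differs from the paper's and can be made to work, but one step fails as written: your bound for the time derivative of the weight together with the transport term, $C(\|\nabla u\|_{L^\infty}+\|\nabla h\|_{L^\infty}\|u\|_{L^\infty})\calE_{m+1}$.

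The second piece is quartic. At $|\beta|=m+1$ it is $\|\nabla h\|_{L^\infty}\|u\|_{L^\infty}\|\nabla^{m+1}h\|_{L^2}^2$, and it cannot be put in your form $\frac{\nu}{8}\|u\|_{H^{m+1}}^2+a(t)\calE_{m+1}$ with $\int_0^Ta\,dt\le CC_1$. Taking $a=C\|\nabla h\|_{L^\infty}\|u\|_{L^\infty}$, Cauchy--Schwarz only gives $\int_0^Ta\,dt\le C\sqrt{C_1}\,\calM_T$. Applying Young first gives $a=\frac{C}{\nu}\|\nabla h\|_{L^\infty}^2\|\nabla^{m+1}h\|_{L^2}^2$ with $\int_0^Ta\,dt\le CC_1\calM_T^2$. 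Either way the Gronwall exponent contains the bootstrap quantity. You then get neither the stated bound nor a bootstrap that closes in Section~\ref{ssec:gwp}, since $C_2^*$ would grow exponentially in $R$.

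The repair is that this term is not actually there. Since $(\pa_t+u\cdot\nabla)(1+h)=-(1+h)\nabla\cdot u$, the $u\cdot\nabla h$ contributions from $\pa_t(1+h)^{-2}$ and from integrating $(1+h)^{-2}\pa^\beta h\,u\cdot\nabla\pa^\beta h$ by parts cancel identically. What remains is exactly $\frac32\intt(1+h)^{-2}\nabla\cdot u\,|\pa^\beta h|^2\,dx$, which is trilinear and is handled by your Young inequality. This is the cancellation the paper records in deriving \eqref{eq:w-den-i}. Your bookkeeping needs the same principle throughout: after the exact cancellations, every remainder must be trilinear (of type $u\,h\,h$ or $u\,u\,u$), with coefficients depending only on $\|h\|_{L^\infty}$ and $\rho_1$.

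Three further corrections are needed.

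(i) The device $\|\nabla u\|_{L^\infty}\le\frac{\nu}{8}+\frac{C}{\nu}\|u\|_{H^{m+1}}^2$ is harmful. Applied to $\|\nabla u\|_{L^\infty}\calE_{m+1}$ it produces $\frac{\nu}{8}\calE_{m+1}$, a growth rate of order $\nu$. The term $\|\nabla u\|_{L^\infty}\|\pa^\beta h\|_{L^2}^2$ must instead go through $C\|u\|_{H^{m+1}}\|h\|_{H^{m+1}}\calE_{m+1}^{1/2}$ and Young.

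(ii) In the Poisson step there is no need to divide by $1+h$. Writing $\nabla\cdot\pa^\beta u=-\pa_t\pa^\beta h-\nabla\cdot\pa^\beta(hu)$ gives exactly
\[
\intt\nabla\cdot\pa^\beta u\,\pa^\beta\phi\,dx=-\frac12\frac{d}{dt}\|\nabla\pa^\beta\phi\|_{L^2}^2+\intt\pa^\beta(hu)\cdot\nabla\pa^\beta\phi\,dx,
\]
with a trilinear remainder. Dividing by $1+h$ instead yields the weighted energy $\frac12\intt(1+h)^{-1}|\nabla\pa^\beta\phi|^2\,dx$, not the unweighted one you wrote, plus extra quartic remainders.

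(iii) Use the Moser bound with constants depending only on $\|h\|_{L^\infty}$ and $\rho_1$. The factor $(1+\|h\|_{H^r})^{r-1}$ in Lemma~\ref{lem:pow-est}, used with $r=m+1$ in the pressure commutator, would again put $\calM_T$ into the exponent.

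With these repairs your scheme is a valid alternative to the paper's. The paper works only at top order. It cancels the Poisson term at velocity level $m+1$ with the order-$m$ density norm weighted by $(1+h)^{-1}$. This is the weighted counterpart of your electrostatic energy, since $\sum_j\|\nabla\pa_j\pa^\alpha\phi\|_{L^2}^2=\|\pa^\alpha h\|_{L^2}^2$. It then recovers $\|u\|_{L^2}$ from Lemma~\ref{lem:be} and the lower-order density norms by Poincar\'e. Your all-level energy gives $\nu\|u\|_{H^{m+1}}^2$ directly, at the cost of running the symmetrizer and the Poisson cancellation at every level.
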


\begin{proof}
We first record the weighted density estimate in a form that will be used twice. Let $|\gamma|=r$ and $\ell\in\N$. Multiplying the differentiated continuity equation by $(1+h)^{-\ell}\pa^\gamma h$, we have
\[
\frac12\frac{d}{dt} \intt(1+h)^{-\ell}|\pa^\gamma h|^2\,dx = -\frac{\ell}{2} \intt (1+h)^{-\ell-1}\pa_t h |\pa^\gamma h|^2\,dx + \intt (1+h)^{-\ell}\pa^\gamma h\,\pa^\gamma \pa_t h \,dx .
\]
Using
\[
\pa_t h =-\nabla h\cdot u-(1+h)\nabla\cdot u,
\]
we obtain
\[
-\frac{\ell}{2} \intt (1+h)^{-\ell-1}\pa_t h |\pa^\gamma h|^2\,dx = \frac{\ell}{2} \intt (1+h)^{-\ell-1}(\nabla h\cdot u)|\pa^\gamma h|^2\,dx + \frac{\ell}{2} \intt (1+h)^{-\ell}(\nabla\cdot u)|\pa^\gamma h|^2\,dx .
\]
On the other hand,
\[
\intt (1+h)^{-\ell}\pa^\gamma h\,\pa^\gamma \pa_t h \,dx = -\intt (1+h)^{-\ell}\pa^\gamma h\,\pa^\gamma\nabla\cdot u\,dx - \intt (1+h)^{-\ell}\pa^\gamma h\,\pa^\gamma\nabla\cdot(hu)\,dx.
\]
We decompose
\[
\pa^\gamma\nabla\cdot(hu) = h\,\pa^\gamma\nabla\cdot u + u\cdot\nabla\pa^\gamma h + \calC_\gamma,
\]
where
\[
\calC_\gamma := \pa^\gamma\nabla\cdot(hu) - h\,\pa^\gamma\nabla\cdot u - u\cdot\nabla\pa^\gamma h .
\]
Then
\begin{align*}
&-\intt (1+h)^{-\ell}\pa^\gamma h\,\pa^\gamma\nabla\cdot u\,dx - \intt (1+h)^{-\ell}\pa^\gamma h\, h\,\pa^\gamma\nabla\cdot u\,dx \cr
&\quad = -\intt (1+h)^{-\ell+1}\pa^\gamma h\, \pa^\gamma\nabla\cdot u\,dx \cr
&\quad = \intt (1+h)^{-\ell+1}\nabla\pa^\gamma h\cdot \pa^\gamma u\,dx -(\ell-1) \intt (1+h)^{-\ell} \pa^\gamma h\,\nabla h\cdot \pa^\gamma u\,dx .
\end{align*}
We also find
\begin{align*}
-\intt (1+h)^{-\ell}\pa^\gamma h\,u\cdot\nabla\pa^\gamma h\,dx &= -\frac12 \intt (1+h)^{-\ell}u\cdot\nabla |\pa^\gamma h|^2\,dx \cr
&= \frac12 \intt \nabla\cdot\lt((1+h)^{-\ell}u\rt) |\pa^\gamma h|^2\,dx \cr
&= -\frac{\ell}{2} \intt(1+h)^{-\ell-1}(\nabla h\cdot u)|\pa^\gamma h|^2\,dx +  \frac12\intt(1+h)^{-\ell}(\nabla\cdot u)|\pa^\gamma h|^2\,dx .
\end{align*}
Combining the above identities gives
\bq\label{eq:w-den-i}
\frac12\frac{d}{dt}
\|(1+h)^{-\frac\ell2}\pa^\gamma h\|_{L^2}^2 = \intt (1+h)^{-\ell+1}\nabla\pa^\gamma h\cdot \pa^\gamma u\,dx + \calR_\gamma^\ell ,
\eq
where
\begin{align*}
\calR_\gamma^\ell &:= \frac{\ell+1}2\intt  (1+h)^{-\ell}  (\nabla \cdot u)|\pa^\gamma h|^2\,dx -(\ell - 1) \intt (1+h)^{-\ell} (\pa^\gamma h)(\nabla h) \cdot \pa^\gamma u\,dx\cr
&\quad -\intt (1+h)^{-\ell} \pa^\gamma h \cdot \lt[ \pa^\gamma \nabla \cdot (hu) - \pa^\gamma \nabla h \cdot u - h \pa^\gamma \nabla \cdot u\rt] dx.
\end{align*}
By the standard product commutator estimate, the last term in $\calR_\gamma^\ell$ is controlled by
\[
\|\pa^\gamma h\|_{L^2} \lt( \|\nabla u\|_{L^\infty}\|\pa^\gamma h\|_{L^2} + \|\nabla h\|_{L^\infty}\|\pa^\gamma u\|_{L^2} \rt).
\]
Thus, we have
\[
|\calR_\gamma^\ell| \le C\|(1+h)^{-\ell}\|_{L^\infty}\|\pa^\gamma h\|_{L^2} \lt( \|\nabla u\|_{L^\infty}\|\pa^\gamma h\|_{L^2} + \|\nabla h\|_{L^\infty}\|\pa^\gamma u\|_{L^2} \rt).
\]

We now apply \eqref{eq:w-den-i} in two different ways. First, taking $\ell=1$ and $|\alpha|=m$, we get
\bq\label{eq:den-po-c}
\frac12\frac{d}{dt}
\|(1+h)^{-1/2}\pa^\alpha h\|_{L^2}^2 = \intt \nabla\pa^\alpha h\cdot \pa^\alpha u\,dx + \calR_\alpha^1 .
\eq
Second, taking $\ell=2$ and $\gamma=\alpha+e_j$, with $|\alpha|=m$ and
$j=1,\dots,d$, we find
\bq\label{eq:den-pr-c}
\frac12\frac{d}{dt} \|(1+h)^{-1}\pa_j\pa^\alpha h\|_{L^2}^2 = \intt (1+h)^{-1} \nabla\pa_j\pa^\alpha h\cdot \pa_j\pa^\alpha u\,dx + \calR_{\alpha,j}^2 .
\eq

We next estimate the velocity equation. Applying $\nabla\pa^\alpha$, $|\alpha|=m$, to
\[
\pa_t u+u\cdot\nabla u+(1+h)^{-1}\nabla h+\nabla\phi=-\nu u,
\]
and testing by $\nabla\pa^\alpha u$, we obtain
\begin{align}\label{eq:vel-top}
\begin{aligned}
&\frac12\frac{d}{dt}\intt |\nabla \pa^\alpha u|^2\,dx + \nu \intt |\nabla \pa^\alpha u|^2\,dx  \cr
&\quad = - \intt \nabla \pa^\alpha u : (u \cdot \pa^\alpha \nabla^2 u)\,dx - \intt \nabla \pa^\alpha u : \lt[ \nabla \pa^\alpha (u \cdot \nabla u) - u \cdot \pa^\alpha \nabla^2 u \rt] dx \cr
&\quad \quad - \intt (1+h)^{-1} \nabla \pa^\alpha u : \pa^\alpha \nabla^2 h\,dx  - \intt \nabla \pa^\alpha u : \lt[ \nabla \pa^\alpha ((1+h)^{-1}\nabla h) - (1+h)^{-1} \pa^\alpha \nabla^2 h \rt] dx \cr
&\quad \quad - \intt \nabla \pa^\alpha u : \pa^\alpha \nabla^2 \phi\,dx \cr
&\quad \le C\|\nabla u\|_{L^\infty}\|\nabla \pa^\alpha u\|_{L^2}^2   - \intt (1+h)^{-1} \nabla \pa^\alpha u : \pa^\alpha \nabla^2 h\,dx \cr
&\quad \quad + C\|\nabla \pa^\alpha u\|_{L^2} \lt( \|\nabla (1+h)^{-1}\|_{L^\infty} \|\nabla \pa^\alpha h\|_{L^2} + \|\nabla h\|_{L^\infty}\|\nabla \pa^\alpha (1+h)^{-1}\|_{L^2}\rt) \cr
&\quad \quad - \intt \nabla \pa^\alpha u : \pa^\alpha \nabla^2 \phi\,dx\cr
&\quad \le C\|\nabla u\|_{L^\infty}\|\nabla^{m+1}u\|_{L^2}^2 + C\|\nabla^{m+1}u\|_{L^2} \|\nabla h\|_{L^\infty}\|\nabla^{m+1}h\|_{L^2}\cr
&\quad \quad - \intt (1+h)^{-1} \nabla \pa^\alpha u : \pa^\alpha \nabla^2 h\,dx - \intt \nabla \pa^\alpha u : \pa^\alpha \nabla^2 \phi\,dx,
\end{aligned}
\end{align}
where we used Lemma \ref{lem:pow-est}, Sobolev's inequality, Poincar\'e's inequality, and the lower-order bounds. The leading pressure term can be written as
\[
-\sum_{j=1}^d \intt (1+h)^{-1} \pa_j\pa^\alpha u\cdot \nabla\pa_j\pa^\alpha h\,dx ,
\]
and thus it exactly cancels the leading term in \eqref{eq:den-pr-c}, after summing over $j=1,\dots,d$. The Poisson term cancels with \eqref{eq:den-po-c}. Indeed, using $-\Delta\phi=h$ and the periodic boundary condition,
\[
-\intt \nabla\pa^\alpha u: \pa^\alpha\nabla^2\phi\,dx = \intt \pa^\alpha u\cdot \pa^\alpha\nabla\Delta\phi\,dx = -\intt \pa^\alpha u\cdot\nabla\pa^\alpha h\,dx .
\]
This exactly cancels the leading term in \eqref{eq:den-po-c}.

Summing \eqref{eq:den-po-c} and \eqref{eq:vel-top} over $|\alpha|=m$, and summing \eqref{eq:den-pr-c} over $|\alpha|=m$ and $j=1,\dots,d$, the two leading terms cancel as above. We obtain
\bq\label{est_u_high}\begin{aligned}
\frac12\frac{d}{dt}\calE_{m+1}  + \nu\|\nabla^{m+1}u\|_{L^2}^2 &\le C\|\nabla u\|_{L^\infty}\|\nabla^{m+1} u\|_{L^2}^2 \\
&\quad + C\lt(\|\nabla u\|_{L^\infty} + \|\nabla h\|_{L^\infty}\rt)\|\nabla^{m+1} u\|_{L^2}\|\nabla^{m+1} h\|_{L^2}\\
&\le C\|\nabla^{m+1}u\|_{L^2}^3 + C\|\nabla^{m+1}u\|_{L^2} \|\nabla^{m+1}h\|_{L^2}^2,
\end{aligned}\eq
where
\[
\calE_{m+1} := \sum_{|\alpha|=m} \|(1+h)^{-1/2}\pa^\alpha h\|_{L^2}^2 + \sum_{|\alpha|=m}\sum_{j=1}^d \|(1+h)^{-1}\pa_j\pa^\alpha h\|_{L^2}^2 + \sum_{|\alpha|=m} \|\nabla\pa^\alpha u\|_{L^2}^2 . 
\]
By the pointwise density bounds \eqref{eq:den-boot}, $\calE_{m+1}$ is equivalent to the corresponding top-order quantity, namely
\[
\|\nabla^m h\|_{L^2}^2 + \|\nabla^{m+1}h\|_{L^2}^2 + \|\nabla^{m+1}u\|_{L^2}^2 .
\]

We now absorb the nonlinear velocity contribution. By Young's inequality, we get
\begin{align*}
C\|\nabla^{m+1}u\|_{L^2}^3 + C\|\nabla^{m+1}u\|_{L^2} \|\nabla^{m+1}h\|_{L^2}^2  &\le \frac{\nu}{2}\|\nabla^{m+1}u\|_{L^2}^2 + \frac{C}{\nu} \lt( \|\nabla^{m+1}u\|_{L^2}^4 + \|\nabla^{m+1}h\|_{L^2}^4 \rt) \cr
&\le \lt( \frac{\nu}{2} + \frac{C\calM_T^2}{\nu} \rt) \|\nabla^{m+1}u\|_{L^2}^2 + \frac{C}{\nu} \|\nabla^{m+1}h\|_{L^2}^2 \calE_{m+1}.
\end{align*}
Using the smallness condition \eqref{eq:MTs} in the large-damping regime, and choosing $\delta_0>0$ sufficiently small, the term involving $\|\nabla^{m+1}u\|_{L^2}^2$ can be absorbed into the damping term. Hence, we have
\[
\frac12\frac{d}{dt}\calE_{m+1} + c_0\nu\|\nabla^{m+1}u\|_{L^2}^2 \le \frac{C}{\nu} \|\nabla^{m+1}h\|_{L^2}^2 \calE_{m+1},
\]
for some $c_0>0$. Applying Gronwall's lemma and using the density estimate \eqref{eq:C1-den-bdd}, we obtain
\begin{align*}
\calE_{m+1}(t) + \nu\int_0^t \|\nabla^{m+1}u(s)\|_{L^2}^2\,ds &\le \calE_{m+1}(0) \exp\lt(\frac{C}{\nu} \int_0^t \|\nabla^{m+1}h(s)\|_{L^2}^2\,ds \rt) \cr
&\le e^{CC_1} \lt( \|h_0\|_{H^{m+1}}^2 + \|u_0\|_{H^{m+1}}^2 \rt).
\end{align*}
 
Finally, we pass from the homogeneous top-order estimate to the full $H^{m+1}$ estimate. The density part is obtained by combining the lower-order bound \eqref{eq:C1-den-bdd} with the control of $\|\nabla^{m+1}h\|_{L^2}$ contained in $\calE_{m+1}$. For the velocity, the top-order energy controls $\|\nabla^{m+1}u\|_{L^2}$, while the zeroth-order norm is controlled by Lemma \ref{lem:be}, since the density bootstrap bound gives positive lower and upper bounds for $\rho$. Thus, using the interpolation
\[
\|u\|_{H^{m+1}}^2 \le C\lt( \|u\|_{L^2}^2+\|\nabla^{m+1}u\|_{L^2}^2 \rt),
\]
we obtain
\[
\sup_{0\le t\le T} \lt( \|h(t)\|_{H^{m+1}}^2+\|u(t)\|_{H^{m+1}}^2 \rt) + \nu\int_0^T\|u(t)\|_{H^{m+1}}^2\,dt \le e^{CC_1}\lt( \|h_0\|_{H^{m+1}}^2+\|u_0\|_{H^{m+1}}^2 \rt).
\]
This completes the proof. 
\end{proof}

%
%
%
%
%

\section{Auxiliary density and comparison estimate}\label{sec:aux-comp}

In this section, we introduce an auxiliary drift-diffusion density and compare it with the Euler--Poisson density. The role of the auxiliary system is twofold. First, it preserves the pointwise lower and upper bounds of the initial density by a maximum principle. Second, it provides a parabolic reference dynamics to which the Euler--Poisson density can be compared in the large-damping regime.

Let $\tilde\rho=\tilde\rho(t,x)$ denote the auxiliary density, and set
\[
\tilde h:=\tilde\rho-1.
\]
The comparison variable is
\[
w:=h-\tilde h.
\]
As in the modified density estimate of Section \ref{sec:apri}, the comparison is not performed directly on $w$. Instead, we use the modified comparison variable
\bq\label{eq:Z-def}
Z := w-\frac1\nu\nabla\cdot(\rho u) = h-\tilde h-\frac1\nu\nabla\cdot((1+h)u).
\eq
This modification removes the time derivative of $\rho u$ in the parabolic reformulation of the density equation.

Throughout this section, we use the bootstrap quantity $\calM_T$ defined in \eqref{eq:MT}. We also use the estimates obtained in Section \ref{sec:apri}:
\bq\label{eq:den-bdd1}
\sup_{0\le t\le T}\|h(t)\|_{H^m}^2 + \frac1\nu\int_0^T\|h(t)\|_{H^{m+1}}^2\,dt \le C_1,
\eq
and
\bq\label{eq:top-bdd}
\sup_{0\le t\le T} \lt( \|h(t)\|_{H^{m+1}}^2+\|u(t)\|_{H^{m+1}}^2 \rt) + \nu\int_0^T\|u(t)\|_{H^{m+1}}^2\,dt \le C_2.
\eq
The constants $C_1>0$ and $C_2>0$ are independent of $T$ and $\nu$ in the large-damping regime.

%
%
%
%
%

\subsection{Auxiliary drift-diffusion system}

We first study the auxiliary drift-diffusion system. Let $\tilde\rho=\tilde\rho(t,x)$ and $\tilde\phi=\tilde\phi(t,x)$ solve
\bq\label{eq:aux-sys}
\pa_t\tilde\rho -\frac1\nu\nabla\cdot(\tilde\rho\nabla\tilde\phi) = \frac1\nu\Delta\tilde\rho, \quad -\Delta\tilde\phi=\tilde\rho-1,
\eq
with initial data
\[
\tilde\rho(0,x)=\rho_0(x).
\]
Equivalently, in terms of $\tilde h=\tilde\rho-1$, we have
\bq\label{eq:aux-h-sys}
\pa_t\tilde h -\frac1\nu\nabla\cdot(\tilde\rho\nabla\tilde\phi) = \frac1\nu\Delta\tilde h, \quad -\Delta\tilde\phi=\tilde h, \quad \tilde\rho=1+\tilde h.
\eq

The key point is that \eqref{eq:aux-sys} preserves the pointwise lower and upper bounds of the initial density.

\begin{lemma}\label{lem:aux-maximum}
Assume that
\[
0<\rho_1\le \rho_0(x)\le \rho_2, \quad x\in\T^d,
\]
and
\[
\intt \rho_0\,dx=1.
\]
Then every smooth solution $\tilde\rho$ to \eqref{eq:aux-sys} satisfies
\bq\label{eq:aux-maxi-bdd}
0<\rho_1\le \tilde\rho(t,x)\le \rho_2, \quad t\ge0,\quad x\in\T^d,
\eq
as long as the smooth solution exists.
\end{lemma}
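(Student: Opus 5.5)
Expanding the drift term with $-\Delta\tilde\phi=\tilde\rho-1$ turns \eqref{eq:aux-sys} into the nondivergence form
\[
\pa_t\tilde\rho = \frac1\nu\Delta\tilde\rho + \frac1\nu\nabla\tilde\phi\cdot\nabla\tilde\rho + \frac1\nu\tilde\rho\,\Delta\tilde\phi
= \frac1\nu\Delta\tilde\rho + \frac1\nu\nabla\tilde\phi\cdot\nabla\tilde\rho - \frac1\nu\tilde\rho(\tilde\rho-1).
\]
This is a scalar parabolic equation with a smooth transport coefficient $\nabla\tilde\phi$ and the logistic-type reaction $F(\tilde\rho)=-\tilde\rho(\tilde\rho-1)$. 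The constants $0$ and $1$ are equilibria, $F>0$ on $(0,1)$, and $F<0$ on $(1,\infty)$. The plan is to run a comparison/maximum principle argument on $M(t)=\max_x\tilde\rho(t,\cdot)$ and $m(t)=\min_x\tilde\rho(t,\cdot)$. Mass conservation $\intt\tilde\rho\,dx=1$ (from the divergence form) together with $\intt\rho_0=1$ forces $\rho_1\le1\le\rho_2$. So the interval $[\rho_1,\rho_2]$ contains $1$, and $F$ has the right sign at both of its endpoints.

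For the upper bound, fix $t$ and let $x_t$ be a point where $\tilde\rho(t,\cdot)$ attains its maximum. There $\nabla\tilde\rho=0$ and $\Delta\tilde\rho\le0$, so the equation yields the differential inequality
\[
D^+M(t)\le -\frac1\nu M(t)(M(t)-1)
\]
in the sense of upper Dini derivatives, which is valid for smooth solutions on the compact torus. Hence $M$ is nonincreasing whenever $M\ge1$. Since $M(0)\le\rho_2$ and $\rho_2\ge1$, we get $M(t)\le\rho_2$: if $M$ ever exceeded $\rho_2$, then on the first interval where $M>\rho_2\ge1$ its Dini derivative would be $\le0$, a contradiction. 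The minimum is handled symmetrically. At a minimum point $D_-m(t)\ge -\frac1\nu m(m-1)$, which is $\ge0$ when $0<m\le1$. Since $m(0)\ge\rho_1\in(0,1]$, $m$ cannot drop below $\rho_1$ while it stays in $(0,1]$. Positivity is also propagated this way, because $\rho_1>0$ keeps $m$ away from $0$.

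The only delicate point, and the expected obstacle, is making the ``first exit time'' argument rigorous given that $M,m$ are merely Lipschitz. I would handle it by a strict perturbation. Set $\rho^\varepsilon_2=\rho_2+\varepsilon(1+t)$ and suppose $t_*$ is the first time with $M(t_*)=\rho^\varepsilon_2(t_*)$. At $(t_*,x_{t_*})$ we have $\pa_t\tilde\rho\ge\varepsilon$. The equation, however, gives $\pa_t\tilde\rho\le -\frac1\nu\rho_2^\varepsilon(\rho_2^\varepsilon-1)<0$, since $\rho_2^\varepsilon>1$. This is a contradiction, and letting $\varepsilon\to0$ gives the upper bound. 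For the lower bound, use $\rho_1-\varepsilon(1+t)$ in the same way; for small $\varepsilon$ this barrier lies in $(0,1)$ on any bounded time interval, so the strict sign of $F$ applies there. The argument uses only smoothness on $[0,T]$, so it holds as long as the smooth solution exists.
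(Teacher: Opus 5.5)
Your proof is correct and follows the same route as the paper: rewrite the drift using $-\Delta\tilde\phi=\tilde\rho-1$, so that at a spatial extremum the sign of $\partial_t\tilde\rho$ is governed by the reaction term $-\tilde\rho(\tilde\rho-1)$. As in the paper, the unit mean places the extrema on the correct side of $1$. The differences are minor. The paper invokes mass conservation at each time to get $\max\tilde\rho(t)\ge1\ge\min\tilde\rho(t)$, whereas you use only $\rho_1\le1\le\rho_2$ together with the strict barriers $\rho_2+\varepsilon(1+t)$ and $\rho_1-\varepsilon(1+t)$. Your barrier argument also makes rigorous two points the paper takes for granted: the monotonicity of the merely Lipschitz extremal functions, and the positivity of the minimum.
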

\begin{proof}
Mass conservation follows directly from \eqref{eq:aux-sys}, and thus
\[
\intt \tilde\rho(t,x)\,dx=1.
\]
Let $x_t$ be a point where $\tilde\rho(t,\cdot)$ attains its maximum. At $(t,x_t)$, we have $\nabla\tilde\rho=0$, $\Delta\tilde\rho\le0$, and, by the mass constraint, $\tilde\rho\ge1$. Therefore, using $-\Delta\tilde\phi=\tilde\rho-1$,
\[
\pa_t\tilde\rho = \frac1\nu\nabla\cdot(\tilde\rho\nabla\tilde\phi) +\frac1\nu\Delta\tilde\rho = -\frac1\nu\tilde\rho(\tilde\rho-1) +\frac1\nu\Delta\tilde\rho \le0.
\]
Thus, the spatial maximum is nonincreasing.

The same argument applied at a spatial minimum, where $\Delta\tilde\rho\ge0$ and $\tilde\rho\le1$, shows that the minimum is nondecreasing. Therefore, the bounds \eqref{eq:aux-maxi-bdd} follow from the initial bounds.
\end{proof}

We next derive Sobolev estimates for $\tilde h$. The following estimate is the one needed later in the comparison argument.
\begin{proposition}\label{prop:aux-sob}
Let $2\le d\le4$ and let $m\in\N$ satisfy $m>d/2$. Assume the hypotheses of Lemma \ref{lem:aux-maximum}. Then, for each integer $0\le k\le m+1$, the solution $\tilde h=\tilde\rho-1$ satisfies
\[
\|\tilde h(t)\|_{H^k}^2 + \frac1\nu\int_0^t \|\tilde h(s)\|_{H^{k+1}}^2\,ds \le \tilde C_k, \quad t\ge0,
\]
where the constants $\tilde C_k$ are defined recursively by
\[
\tilde C_0:=\|h_0\|_{L^2}^2,
\]
and, for $1\le k\le m+1$,
\bq\label{eq:Ctildek-def}
\tilde C_k := \tilde C_{k-1} +  e^{C\tilde C_{k-1}} \|\nabla^k h_0\|_{L^2}^2.
\eq
Here $C>0$ depends only on $d,k,\rho_1,\rho_2$, but is independent of $t$ and $\nu$.
\end{proposition}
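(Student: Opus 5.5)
The proof is an induction on $k$, mirroring Step 1 and Step 2 of Proposition~\ref{prop:mod-den}. It is simpler here: the auxiliary equation \eqref{eq:aux-h-sys} is already parabolic, so no modified variable is needed. Using $-\Delta\tilde\phi=\tilde h$, we rewrite
\[
\nabla\cdot(\tilde\rho\nabla\tilde\phi)=-\tilde h+\nabla\cdot(\tilde h\nabla\tilde\phi),
\]
so that
\[
\pa_t\tilde h=\frac1\nu\big(\Delta\tilde h-\tilde h+\nabla\cdot(\tilde h\nabla\tilde\phi)\big).
\]

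\textbf{Base case $k=0$.} We test with $\tilde h$. Integration by parts gives
\[
\intt\tilde h\,\nabla\cdot(\tilde h\nabla\tilde\phi)\,dx=-\frac12\intt\tilde h^3\,dx .
\]
Hence
\[
\frac12\frac{d}{dt}\|\tilde h\|_{L^2}^2+\frac1\nu\|\nabla\tilde h\|_{L^2}^2+\frac1\nu\intt\Big(1+\frac{\tilde h}{2}\Big)\tilde h^2\,dx=0 .
\]
By Lemma~\ref{lem:aux-maximum}, $1+\tilde h/2=(1+\tilde\rho)/2\ge\frac12$. This yields
\[
\|\tilde h(t)\|_{L^2}^2+\frac1\nu\int_0^t\|\tilde h\|_{H^1}^2\,ds\le\|h_0\|_{L^2}^2=\tilde C_0 ,
\]
after adjusting harmless factors of $2$ (or slightly renaming constants).

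\textbf{Induction step $1\le k\le m+1$.} Assume the bound at order $k-1$. Fix $|\alpha|=k$, apply $\pa^\alpha$, test with $\pa^\alpha\tilde h$ and sum over $\alpha$:
\[
\frac12\frac{d}{dt}\|\nabla^k\tilde h\|_{L^2}^2+\frac1\nu\big(\|\nabla^{k+1}\tilde h\|_{L^2}^2+\|\nabla^k\tilde h\|_{L^2}^2\big)=\frac1\nu\sum_{|\alpha|=k}\intt\pa^\alpha\tilde h\,\pa^\alpha\nabla\cdot(\tilde h\nabla\tilde\phi)\,dx .
\]
The right-hand side is exactly the term treated in Lemma~\ref{lem:po-drift}, with $f=\tilde h$ (zero mean by mass conservation) and $\Phi=\tilde\phi$. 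It is therefore bounded by
\[
\frac C\nu\|\nabla^k\tilde h\|_{L^2}^2\|\nabla^{k+1}\tilde h\|_{L^2}\le\frac1{2\nu}\|\nabla^{k+1}\tilde h\|_{L^2}^2+\frac C\nu\|\nabla^k\tilde h\|_{L^2}^4 .
\]
This leaves
\[
\frac{d}{dt}\|\nabla^k\tilde h\|_{L^2}^2+\frac1\nu\|\nabla^k\tilde h\|_{H^1}^2\le\frac C\nu\|\nabla^k\tilde h\|_{L^2}^2\cdot\|\nabla^k\tilde h\|_{L^2}^2 .
\]
We apply Gronwall's lemma with coefficient $\frac C\nu\|\nabla^k\tilde h\|_{L^2}^2$. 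The induction hypothesis gives
\[
\frac1\nu\int_0^\infty\|\tilde h\|_{H^k}^2\,ds\le\tilde C_{k-1},
\]
so this coefficient has time integral at most $C\tilde C_{k-1}$, uniformly in $t$ and $\nu$. Hence
\[
\|\nabla^k\tilde h(t)\|_{L^2}^2+\frac1\nu\int_0^t\|\nabla^k\tilde h\|_{H^1}^2\,ds\le e^{C\tilde C_{k-1}}\|\nabla^kh_0\|_{L^2}^2 .
\]
Adding the order-$(k-1)$ bound (the norms are equivalent via the homogeneous parts) gives the recursion \eqref{eq:Ctildek-def}.

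\textbf{Main obstacle and global existence.} The one real difficulty is the nonlinear drift term, which is cubic and whose factor $\|\nabla^k\tilde h\|_{L^2}^2$ is not small. It works only because Lemma~\ref{lem:po-drift} costs exactly one half of the dissipation derivative. What is then needed is the time-integrability $\frac1\nu\int\|\tilde h\|_{H^k}^2$ from the previous level, with the same $\nu^{-1}$ weight as the Gronwall coefficient, so that the constants are independent of $\nu$.

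Lemma~\ref{lem:po-drift} requires $k\ge1$. This is satisfied throughout the induction, and $k=0$ is handled separately above. Its proof uses no restriction on $k$ beyond $f\in H^{k+1}$, so $k=m+1$ is admissible. Finally, these a priori bounds, together with standard local parabolic well-posedness, show that the smooth solution is global, so the estimates hold for all $t\ge0$.
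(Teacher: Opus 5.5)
Your proposal is correct and follows the paper's proof essentially step for step. The base case uses the same $L^2$ energy identity, with the cubic drift term given a sign by the maximum principle. The induction step is the same as well: Lemma~\ref{lem:po-drift}, Young's inequality, and Gr\"onwall with coefficient $\frac{C}{\nu}\|\nabla^k\tilde h\|_{L^2}^2$, whose time integral is bounded by $C\tilde C_{k-1}$. Your base-case inequality in fact gives $\tilde C_0=\|h_0\|_{L^2}^2$ exactly, so no adjustment of factors is needed there.
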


\begin{proof}
We first prove the $L^2$ estimate. Multiplying the system \eqref{eq:aux-h-sys} by $\tilde h$ and integrating over $\T^d$, we obtain
\[
\frac12\frac{d}{dt}\|\tilde h\|_{L^2}^2 + \frac1\nu\|\nabla\tilde h\|_{L^2}^2 = \frac1\nu \intt  \tilde h\,\nabla\cdot(\tilde\rho\nabla\tilde\phi)\,dx.
\]
Since $\tilde\rho=1+\tilde h$ and $-\Delta\tilde\phi=\tilde h$, we compute
\[
\intt  \tilde h\,\nabla\cdot(\tilde\rho\nabla\tilde\phi)\,dx =  \frac12\intt \tilde\rho^2\Delta\tilde\phi\,dx = -\frac12\intt  \tilde\rho^2(\tilde\rho-1)\,dx.
\]
Using $\intt \tilde h\,dx=0$, we have
\[
-\frac12\intt  \tilde\rho^2(\tilde\rho-1)\,dx = -\frac12\intt  (1+\tilde h)^2\tilde h\,dx.
\]
Moreover,
\[
(1+\tilde h)^2\tilde h = (1+\tilde h)\tilde h^2 + (1+\tilde h)\tilde h,
\]
and
\[
\intt (1+\tilde h)\tilde h\,dx = \intt (\tilde h+\tilde h^2)\,dx = \|\tilde h\|_{L^2}^2.
\]
Thus, we obtain
\[
\intt  \tilde h\,\nabla\cdot(\tilde\rho\nabla\tilde\phi)\,dx = -\frac12\intt  (1+\tilde h)\tilde h^2\,dx -\frac12\|\tilde h\|_{L^2}^2.
\]
By the maximum principle,
\[
1+\tilde h=\tilde\rho\ge \rho_1>0,
\]
and hence
\[
\frac{d}{dt}\|\tilde h\|_{L^2}^2 + \frac2\nu\|\nabla\tilde h\|_{L^2}^2 + \frac1\nu\|\tilde h\|_{L^2}^2 \le 0.
\]
Consequently, we have
\[
\|\tilde h(t)\|_{L^2}^2 + \frac1\nu\int_0^t\|\tilde h(s)\|_{H^1}^2\,ds \le \|h_0\|_{L^2}^2.
\]
This proves the estimate for $k=0$.

We now prove the higher-order estimate. Let $1\le k\le m+1$, and assume that
\bq\label{eq:h-ind}
\|\tilde h(t)\|_{H^{k-1}}^2 + \frac1\nu\int_0^t \|\tilde h(s)\|_{H^k}^2\,ds \le \tilde C_{k-1}.
\eq
We prove the estimate at order $k$.
 
Let $|\alpha|=k$. Applying $ \pa^\alpha$ to \eqref{eq:aux-h-sys}, multiplying by $ \pa^\alpha\tilde h$, and integrating over $\T^d$, we obtain
\bq\label{eq:aux-Hk0}
\frac12\frac{d}{dt}\| \pa^\alpha\tilde h\|_{L^2}^2 + \frac1\nu\|\nabla \pa^\alpha\tilde h\|_{L^2}^2 = \frac1\nu \intt  \pa^\alpha\tilde h\, \pa^\alpha\nabla\cdot(\tilde\rho\nabla\tilde\phi)\,dx.
\eq
Since
\[
\nabla\cdot(\tilde\rho\nabla\tilde\phi) = -\tilde h+\nabla\cdot(\tilde h\nabla\tilde\phi),
\]
we have
\bq\label{eq:aux-sp}
\frac1\nu \intt  \pa^\alpha\tilde h\, \pa^\alpha\nabla\cdot(\tilde\rho\nabla\tilde\phi)\,dx = -\frac1\nu\| \pa^\alpha\tilde h\|_{L^2}^2 + \frac1\nu \intt   \pa^\alpha\tilde h\,  \pa^\alpha\nabla\cdot(\tilde h\nabla\tilde\phi)\,dx.
\eq
The nonlinear drift term is estimated by Lemma \ref{lem:po-drift}, applied with $f=\tilde h$ and $\Phi=\tilde\phi$:
\[
\lt| \frac1\nu \intt   \pa^\alpha\tilde h\, \pa^\alpha\nabla\cdot(\tilde h\nabla\tilde\phi)\,dx\rt| \le \frac{C}{\nu} \|\nabla^k\tilde h\|_{L^2}^2 \|\nabla^{k+1}\tilde h\|_{L^2}.
\]
Using Young's inequality, we get
\bq\label{eq:aux-non}
\lt| \frac1\nu \intt   \pa^\alpha\tilde h\,  \pa^\alpha\nabla\cdot(\tilde h\nabla\tilde\phi)\,dx \rt| \le \frac{1}{2\nu}\|\nabla^{k+1}\tilde h\|_{L^2}^2 + \frac{C}{\nu}\|\nabla^k\tilde h\|_{L^2}^4.
\eq
Combining \eqref{eq:aux-Hk0}, \eqref{eq:aux-sp}, and \eqref{eq:aux-non}, and summing over $|\alpha|=k$, we obtain
\[
\frac{d}{dt}\|\nabla^k\tilde h\|_{L^2}^2 + \frac1\nu\|\nabla^{k+1}\tilde h\|_{L^2}^2 + \frac{c}{\nu}\|\nabla^k\tilde h\|_{L^2}^2 \le \frac{C}{\nu}\|\nabla^k\tilde h\|_{L^2}^4.
\]

We now close the induction. Applying Gr\"onwall's lemma, we have
\[
\|\nabla^k\tilde h(t)\|_{L^2}^2 + \frac1\nu\int_0^t \|\nabla^{k+1}\tilde h(s)\|_{L^2}^2\,ds \le \|\nabla^k h_0\|_{L^2}^2\exp\lt(\frac{C}{\nu}\int_0^t  \|\nabla^k\tilde h(s)\|_{L^2}^2\,ds \rt) \le \|\nabla^k h_0\|_{L^2}^2 e^{C \tilde C_{k-1}}
\]
due to the induction hypothesis \eqref{eq:h-ind}. Adding the lower-order estimate controlled by $\tilde C_{k-1}$ yields
\[
\|\tilde h(t)\|_{H^k}^2 + \frac1\nu\int_0^t \|\tilde h(s)\|_{H^{k+1}}^2\,ds \le \tilde C_k,
\]
where $\tilde C_k$ is defined by \eqref{eq:Ctildek-def}. This closes the induction and proves the proposition.
\end{proof}

%
%
%
%
%

\subsection{Comparison with the auxiliary density}

We now compare the Euler--Poisson density with the auxiliary density. The estimate is based on the modified comparison variable $Z$ defined in \eqref{eq:Z-def}. From Proposition \ref{prop:aux-sob}, the auxiliary solution satisfies
\bq\label{eq:tilde-bdd}
\|\tilde h(t)\|_{H^{m+1}}^2 + \frac1\nu\int_0^t \|\tilde h(s)\|_{H^{m+2}}^2\,ds \le \tilde C_{m+1}, \quad t\ge0.
\eq
The following estimate shows that $h$ remains close to $\tilde h$ in $H^m$ when the damping is large.

\begin{proposition}\label{prop:comp-den}
Let $2\le d\le4$ and let $m\in\N$ satisfy $m>d/2$. Assume that \eqref{eq:den-boot}, \eqref{eq:den-bdd1}, \eqref{eq:top-bdd}, and \eqref{eq:tilde-bdd} hold.
Then, we have
\[
\sup_{0\le t\le T}\|w(t)\|_{H^m}^2 + \frac1\nu\int_0^T\|w(t)\|_{H^{m+1}}^2\,dt \le  \frac{C(1+\calM_T)^2(1+\calM_T^2)}{\nu^2}.
\]
Here $C>0$ depends only on the initial data, $d,m, \rho_1, \rho_2$, but is independent of $T$, $\nu$, and $\calM_T$.
\end{proposition}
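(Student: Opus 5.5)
We follow the structure of Proposition \ref{prop:mod-den}, now applied to the difference. Subtracting \eqref{eq:aux-h-sys} from \eqref{eq:barh-eq} and using $Z=\bar h-\tilde h$, we get
\[
\pa_t Z = \frac1\nu\Delta w - \frac1\nu w + \frac1\nu\nabla\cdot(h\nabla\phi-\tilde h\nabla\tilde\phi) + \frac1\nu\nabla\otimes\nabla:((1+h)u\otimes u).
\]
Here $\nabla\cdot((1+h)\nabla\phi)-\nabla\cdot(\tilde\rho\nabla\tilde\phi)=-w+\nabla\cdot(w\nabla\phi+\tilde h\nabla\psi)$, where $-\Delta\psi=w$. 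The source $\nu^{-1}\nabla\otimes\nabla:((1+h)u\otimes u)$ is quadratic in $u$, so it is small: by \eqref{eq:top-bdd} its contribution is $O(\nu^{-1}\cdot\nu^{-1}\calM_T^2\cdot(1+\calM_T))$ after time integration. The initial value $Z(0)=-\nu^{-1}\nabla\cdot(\rho_0u_0)$ is also $O(\nu^{-1})$ in $H^m$, since $w(0)=0$.

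\textbf{Energy identity.} For $|\alpha|=k\le m$ we test $\pa^\alpha Z$ against $\pa^\alpha w$, not against $\pa^\alpha Z$. Write $\pa^\alpha w=\pa^\alpha Z+\nu^{-1}\pa^\alpha R$, as in Step 1 of Proposition \ref{prop:mod-den}. The $R$-part is handled through the momentum identity $\pa_tR+\nu R=-[\cdots]$. Alternatively, we estimate $\frac12\frac d{dt}\|\pa^\alpha Z\|^2$ directly and convert dissipation on $w$ into dissipation on $Z$ at the cost of $\nu^{-1}\|R\|_{H^{k+1}}$ terms. These are controlled by $\nu^{-1}(1+\calM_T)\|u\|_{H^{m+1}}$ up to $k=m$, and they are square-integrable with size $\nu^{-2}\calM_T^2(1+\calM_T)^2$. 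Either way we obtain a modified energy $\calE^w_k\simeq\|\nabla^k w\|^2$ modulo $O(\nu^{-2}(1+\calM_T)^2\calM_T^2)$, satisfying
\[
\frac{d}{dt}\calE^w_k+\frac1\nu\|\nabla^k w\|_{H^1}^2\le \frac{C}{\nu}\big(\|h\|_{H^{m+1}}^2+\|\tilde h\|_{H^{m+2}}^2\big)\|w\|_{H^k}^2 + \frac{C(1+\calM_T)^2}{\nu}\|u\|_{H^{m+1}}^2\big(1+\calM_T^2\big)\cdot\frac{1}{\nu}\ \text{-type terms}.
\]
We then apply Gronwall. The prefactor $\frac1\nu\int_0^T(\|h\|_{H^{m+1}}^2+\|\tilde h\|_{H^{m+2}}^2)$ is bounded by $C_1+\tilde C_{m+1}$ through \eqref{eq:den-bdd1} and \eqref{eq:tilde-bdd}, which keeps the exponential factor independent of $\nu,T$. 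The forcing integrates to $C(1+\calM_T)^2(1+\calM_T^2)\nu^{-2}$. The extra factor $\nu^{-1}$ needed on the $u$-terms comes from $\nu\int\|u\|_{H^{m+1}}^2\le\calM_T^2$. The terms bilinear in $R$ and $w$ come with $\nu^{-1}$ and are split by Young's inequality.

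\textbf{Drift difference.} The main obstacle is $\nu^{-1}\int\pa^\alpha w\,\pa^\alpha\nabla\cdot(w\nabla\phi+\tilde h\nabla\psi)$ at top level $k=m$.

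For the piece $w\nabla\phi$, we integrate the top term by parts: $\int\pa^\alpha w\,\nabla\pa^\alpha w\cdot\nabla\phi=\frac12\int h|\pa^\alpha w|^2$. The commutators involve $\nabla^{m+1}\phi$-type factors, bounded by $\|h\|_{H^m}$, against $\|w\|_{H^{m}}$, with possibly one factor $\|w\|_{H^{m+1}}$ absorbed into the dissipation. This yields $\frac C\nu(1+\|h\|_{H^{m}}^2)\|w\|_{H^m}^2$ plus $\frac1{4\nu}\|w\|_{H^{m+1}}^2$.

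For the piece $\tilde h\nabla\psi$, we move one derivative onto $\pa^\alpha w$. Using $\|\nabla\psi\|_{H^{m+1}}\lesssim\|w\|_{H^m}$ and the Sobolev algebra property, this gives $\frac1{4\nu}\|w\|_{H^{m+1}}^2+\frac C\nu\|\tilde h\|_{H^{m}}^2\|w\|_{H^m}^2$.

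The zeroth-order coefficient of $\|w\|_{H^m}^2$ is not time-integrable with the $\nu^{-1}$ weight alone. Since $\|h\|_{H^m}^2\le C_1$, we need the linear damping $-\nu^{-1}\|\pa^\alpha w\|^2$ together with Poincaré (mean zero of $w$) to dominate. When it does not, we replace the bound by one in terms of $\|\nabla w\|$-level quantities using Lemma \ref{lem:po-drift}-type Gagliardo--Nirenberg bounds, so that the coefficient becomes $\|h\|_{H^{m+1}}^2+\|\tilde h\|_{H^{m+1}}^2$, which is integrable against $\nu^{-1}dt$. Finally we sum over $k\le m$ and convert from $\calE^w$ back to $w$. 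This gives the stated bound.
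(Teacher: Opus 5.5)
Your overall architecture is the paper's: the corrected variable $Z=w-\nu^{-1}R$; the identity $\calF=-\pa_tR-\nu R-\nabla\cdot(\tilde\rho\nabla\tilde\phi)-\Delta\tilde h$, which turns the $R$-part of $\frac12\frac{d}{dt}\|\pa^\alpha Z\|_{L^2}^2$ into an exact derivative plus $\nu^{-1}\|\pa^\alpha R\|_{L^2}^2$; the modified energy $\calE^w_m$; and Gronwall from $\calE^w_m(0)=0$. That identity also leaves a cross term $\nu^{-2}\int\pa^\alpha R\,\pa^\alpha\nabla\cdot(\tilde\rho\nabla\tilde\phi+\nabla\tilde h)\,dx$. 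It is bilinear in $R$ and $\tilde h$, not in $R$ and $w$, and it is where $\tilde h\in L^2_tH^{m+2}$ enters, as a forcing of size $C\nu^{-3}\|\tilde h\|_{H^{m+2}}^2$.

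The step you call the main obstacle does not close as written. For the piece $w\nabla\phi$ you arrive at $\frac{C}{\nu}(1+\|h\|_{H^m}^2)\|w\|_{H^m}^2$. The constant part of this coefficient integrates to $CT/\nu$ and destroys uniformity in $T$. The linear damping plus Poincaré's inequality could absorb it only if its constant were small, and you do not arrange that. Your fallback, ``Lemma~\ref{lem:po-drift}-type'' bounds, is unspecified. That lemma also does not apply as stated: it needs the structure $f\nabla\Phi$ with $-\Delta\Phi=f$, whereas here $\phi$ is generated by $h$, not by $w$. The diagnosis behind the detour is also off. \eqref{eq:den-bdd1} gives not only $\sup_t\|h\|_{H^m}^2\le C_1$ but also $\nu^{-1}\int_0^T\|h\|_{H^{m+1}}^2\,dt\le C_1$. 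So a coefficient $\frac{C}{\nu}\|h\|_{H^m}^2$ is integrable with a bound independent of $T$ and $\nu$; only the spurious constant is a problem.

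The fix is to treat $w\nabla\phi$ exactly as you treated $\tilde h\nabla\psi$, which is what the paper does; no top-order cancellation is needed. Integrate by parts once and use the algebra property of $H^m$ with elliptic regularity:
$\|\pa^\alpha(w\nabla\phi)\|_{L^2}\le C\|w\|_{H^m}\|\nabla\phi\|_{H^m}\le C\|w\|_{H^m}\|h\|_{H^{m-1}}$.
Young's inequality then gives $\frac1{16\nu}\|\nabla\pa^\alpha w\|_{L^2}^2+\frac C\nu(\|h\|_{H^m}^2+\|\tilde h\|_{H^m}^2)\|w\|_{H^m}^2$, whose coefficient is integrable by \eqref{eq:den-bdd1} and \eqref{eq:tilde-bdd}. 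The paper splits the difference as $h\nabla\psi+w\nabla\tilde\phi$; your splitting works equally well. If you prefer to keep your cancellation argument, bound the first-power factor as $C\|h\|_{H^m}\le\epsilon+C_\epsilon\|h\|_{H^m}^2$, so that only an $\epsilon$-small constant must be absorbed by the linear damping and Poincaré.

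Also drop the ``alternative'' of estimating $\|\pa^\alpha Z\|_{L^2}^2$ directly. At $k=m$ it produces terms such as $\nu^{-1}\int\nabla\pa^\alpha Z\cdot\nabla\pa^\alpha R\,dx$. These need $\|R\|_{H^{m+1}}$, i.e.\ $m+2$ derivatives of $\rho u$, which $\calM_T$ does not control. Avoiding this loss is precisely why the momentum identity is used.
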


\begin{proof}
We first derive the equation for the modified comparison variable. From the
momentum equation,
\[
\nu\rho u = -\pa_t(\rho u) -\nabla\cdot(\rho u\otimes u) -\nabla h -\rho\nabla\phi.
\]
Taking divergence and using
\[
\pa_t h=-\nabla\cdot(\rho u),
\]
we obtain
\bq\label{eq:hp-comp}
\pa_t h - \frac1\nu\nabla\cdot(\rho\nabla\phi) = \frac1\nu\Delta h + \frac1\nu\nabla\cdot \lt[ \pa_t(\rho u) + \nabla\cdot(\rho u\otimes u) \rt].
\eq
On the other hand, the auxiliary density satisfies
\bq\label{eq:thp-comp}
\pa_t\tilde h - \frac1\nu\nabla\cdot(\tilde\rho\nabla\tilde\phi) = \frac1\nu\Delta\tilde h.
\eq
Subtracting \eqref{eq:thp-comp} from \eqref{eq:hp-comp}, and using \eqref{eq:Z-def}, we get
\bq\label{eq:Z-eq}
\pa_t Z = \frac1\nu \nabla\cdot \lt( \rho\nabla\phi-\tilde\rho\nabla\tilde\phi \rt) + \frac1\nu\Delta w + \frac1\nu\nabla\otimes\nabla:(\rho u\otimes u).
\eq

Let
\[
\psi:=\phi-\tilde\phi, \quad -\Delta\psi=w.
\]
We write \eqref{eq:Z-eq} in the compact form
\[
\pa_t Z = \frac1\nu\calF,
\]
where
\bq\label{eq:F-def}
\calF := \nabla\cdot(\rho\nabla\phi-\tilde\rho\nabla\tilde\phi) + \Delta w + \nabla\otimes\nabla:(\rho u\otimes u).
\eq
Since
\[
\rho\nabla\phi-\tilde\rho\nabla\tilde\phi = \rho\nabla\psi+w\nabla\tilde\phi = \nabla\psi+h\nabla\psi+w\nabla\tilde\phi,
\]
we have
\bq\label{eq:F-ex}
\calF = -w+\Delta w + \nabla\cdot(h\nabla\psi) + \nabla\cdot(w\nabla\tilde\phi) + \nabla\otimes\nabla:(\rho u\otimes u).
\eq

Let $|\alpha|=m$ and set
\[
R:=\nabla\cdot(\rho u).
\]
Since
\[
 \pa^\alpha Z =  \pa^\alpha w-\frac1\nu \pa^\alpha R,
\]
we obtain
\bq\label{eq:Z-en}
\frac12\frac{d}{dt}\| \pa^\alpha Z\|_{L^2}^2 = \frac1\nu \intt   \pa^\alpha w\, \pa^\alpha\calF\,dx - \frac1{\nu^2} \intt   \pa^\alpha R\, \pa^\alpha\calF\,dx .
\eq
Using \eqref{eq:F-ex}, the first term gives the principal dissipation
\[
- \frac1\nu\| \pa^\alpha w\|_{L^2}^2 - \frac1\nu\|\nabla \pa^\alpha w\|_{L^2}^2,
\]
together with the nonlinear terms involving $h\nabla\psi$, $w\nabla\tilde\phi$, and $\rho u\otimes u$.

For the second term in \eqref{eq:Z-en}, we use another representation of $\calF$. Taking divergence in the momentum equation,
we find
\[
\nabla\cdot(\rho\nabla\phi) + \Delta h + \nabla\otimes\nabla:(\rho u\otimes u) = -\pa_t R-\nu R .
\]
Thus, by \eqref{eq:F-def},
\bq\label{eq:F2}
\calF = -\pa_t R-\nu R - \nabla\cdot(\tilde\rho\nabla\tilde\phi) - \Delta\tilde h .
\eq
Substituting \eqref{eq:F2} into the second term in \eqref{eq:Z-en}, we obtain
\bq\label{eq:Rt}
-\frac1{\nu^2} \intt \pa^\alpha R\, \pa^\alpha\calF\,dx = \frac12\frac{d}{dt} \lt\| \frac1\nu \pa^\alpha R \rt\|_{L^2}^2 + \frac1\nu \| \pa^\alpha R\|_{L^2}^2 + \frac1{\nu^2} \intt  \pa^\alpha R\,  \pa^\alpha\nabla\cdot \lt( \tilde\rho\nabla\tilde\phi+\nabla\tilde h \rt)dx .
\eq

Combining \eqref{eq:Z-en} and \eqref{eq:Rt}, we get
\[
\frac12\frac{d}{dt} \lt( \| \pa^\alpha Z\|_{L^2}^2 - \frac1{\nu^2}\lt\|  \pa^\alpha R \rt\|_{L^2}^2 \rt) + \frac1\nu\| \pa^\alpha w\|_{L^2}^2 + \frac1\nu\|\nabla \pa^\alpha w\|_{L^2}^2 =
{\mathsf J}_1^\alpha+{\mathsf J}_2^\alpha+{\mathsf J}_3^\alpha+{\mathsf J}_4^\alpha,
\]
where
\begin{align*}
{\mathsf J}_1^\alpha &:= \frac1\nu \intt   \pa^\alpha w\, \pa^\alpha\nabla\cdot(h\nabla\psi)\,dx + \frac1\nu \intt   \pa^\alpha w\, \pa^\alpha\nabla\cdot(w\nabla\tilde\phi)\,dx,\cr
{\mathsf J}_2^\alpha &:= \frac1\nu \intt   \pa^\alpha w\,  \pa^\alpha\nabla\otimes\nabla:(\rho u\otimes u)\,dx, \quad {\mathsf J}_3^\alpha := \frac1\nu\| \pa^\alpha R\|_{L^2}^2,\cr
{\mathsf J}_4^\alpha &:= \frac1{\nu^2} \intt   \pa^\alpha R\,  \pa^\alpha\nabla\cdot \lt( \tilde\rho\nabla\tilde\phi+\nabla\tilde h \rt)dx.
\end{align*}

Since
\[
\| \pa^\alpha Z\|_{L^2}^2 - \frac1{\nu^2}\lt\|  \pa^\alpha R \rt\|_{L^2}^2 = \| \pa^\alpha w\|_{L^2}^2 - \frac2\nu \intt   \pa^\alpha w\, \pa^\alpha R\,dx,
\]
we define the modified comparison energy
\[
\calE_m^w(t) := \|\nabla^m w(t)\|_{L^2}^2 - \frac2\nu \sum_{|\alpha|=m} \intt  \pa^\alpha w\, \pa^\alpha R\,dx .
\]

We now estimate the terms on the right-hand side. For ${\mathsf J}_1^\alpha$, integrating by parts and using H\"older's inequality give
\[
|{\mathsf J}_1^\alpha| \le \frac1\nu \|\nabla\pa^\alpha w\|_{L^2} \lt( \|\pa^\alpha(h\nabla\psi)\|_{L^2} + \|\pa^\alpha(w\nabla\tilde\phi)\|_{L^2} \rt).
\]
Using the product estimate and elliptic regularity,
\[
\|\nabla\psi\|_{H^m} \le C\|w\|_{H^{m-1}} \le C\|w\|_{H^m}, \quad \|\nabla\tilde\phi\|_{H^m} \le C\|\tilde h\|_{H^{m-1}} \le C\|\tilde h\|_{H^m},
\]
we get
\begin{align}\label{eq:J1-est}
\begin{aligned}
|{\mathsf J}_1^\alpha| &\le \frac{C}{\nu} \|\nabla\pa^\alpha w\|_{L^2} \lt( \|h\|_{H^m} + \|\tilde h\|_{H^m} \rt) \|w\|_{H^m} \cr
&\le \frac{1}{16\nu}\|\nabla\pa^\alpha w\|_{L^2}^2 + \frac{C}{\nu}\lt( \|h\|_{H^m}^2+\|\tilde h\|_{H^m}^2 \rt) \|w\|_{H^m}^2.
\end{aligned}
\end{align}

For ${\mathsf J}_2^\alpha$, we integrate by parts once:
\[
{\mathsf J}_2^\alpha = -\frac1\nu \intt \nabla\pa^\alpha w\cdot \pa^\alpha\nabla\cdot(\rho u\otimes u)\,dx.
\]
By the product estimate,
\[
\|\pa^\alpha\nabla\cdot(\rho u\otimes u)\|_{L^2} \le C(1+\|h\|_{H^{m+1}})\|u\|_{H^{m+1}}^2.
\]
Thus, using the bootstrap bound, this gives
\[
|{\mathsf J}_2^\alpha| \le \frac{C}{\nu} (\|h\|_{H^{m+1}} + \|\tilde h\|_{H^{m+1}}) (1+\|h\|_{H^{m+1}}) \|u\|_{H^{m+1}}^2 \le  \frac{C(1+\calM_T)^2}{\nu}\|u\|_{H^{m+1}}^2,
\]
where $C>0$ depends on the auxiliary bound $\tilde C_{m+1}$.

Next, by definition,
\[
{\mathsf J}_3^\alpha = \frac1\nu\|\pa^\alpha\nabla\cdot(\rho u)\|_{L^2}^2.
\]
The product estimate gives
\[
\|\pa^\alpha\nabla\cdot(\rho u)\|_{L^2} \le C(1+\|h\|_{H^{m+1}})\|u\|_{H^{m+1}}.
\]
Hence
\[
{\mathsf J}_3^\alpha \le \frac{C(1+\calM_T)^2}{\nu}\|u\|_{H^{m+1}}^2.
\]

Finally, using the bootstrap bound, \eqref{eq:tilde-bdd}, and Young's inequality, and absorbing the fixed factor $1+\|\tilde h\|_{H^{m+1}}^2\le 1+\tilde C_{m+1}$ into $C$, we obtain
\begin{align*}
|{\mathsf J}_4^\alpha| &\le \frac1{\nu^2} \| \pa^\alpha\nabla\cdot(\rho u)\|_{L^2} \lt\|  \pa^\alpha\nabla\cdot \lt( \tilde\rho\nabla\tilde\phi+\nabla\tilde h \rt) \rt\|_{L^2}\cr
&\le \frac{C}{\nu^2} (1+\|h\|_{H^{m+1}})\|u\|_{H^{m+1}} (1+\|\tilde h\|_{H^{m+1}}) \|\tilde h\|_{H^{m+2}} \cr
&\le \frac{C(1+\calM_T)^2}{\nu} \|u\|_{H^{m+1}}^2 + \frac{C}{\nu^3} \|\tilde h\|_{H^{m+2}}^2.
\end{align*}

Combining the above estimates, summing over $|\alpha|=m$, and absorbing the small portion of $\|w\|_{H^{m+1}}^2$ from \eqref{eq:J1-est}, we obtain
\bq\label{eq:Ew-diff}
\frac{d}{dt}\calE_m^w(t) + \frac{1}{2\nu}\|w\|_{H^{m+1}}^2 \le \frac{C}{\nu} \lt( \|h\|_{H^m}^2+\|\tilde h\|_{H^m}^2 \rt) \|w\|_{H^m}^2 + \frac{C(1+\calM_T)^2}{\nu} \|u\|_{H^{m+1}}^2 + \frac{C}{\nu^3} \|\tilde h\|_{H^{m+2}}^2 .
\eq

We next compare $\calE_m^w$ with $\|w\|_{H^m}^2$. Since $w$ has zero spatial mean, the homogeneous $H^m$-seminorm is equivalent to the full $H^m$-norm. By Young's inequality and the product estimate,
\[
\lt| \frac2\nu \sum_{|\alpha|=m} \intt   \pa^\alpha w\, \pa^\alpha R\,dx \rt| \le \frac12\|\nabla^m w\|_{L^2}^2 + \frac{C}{\nu^2}\|R\|_{H^m}^2  \le \frac12\|\nabla^m w\|_{L^2}^2 + \frac{C(1+\calM_T)^2}{\nu^2} \|u\|_{H^{m+1}}^2.
\]
Consequently, using the definition of $\calM_T$, we have
\bq\label{eq:w-Ew}
\|w\|_{H^m}^2 \le C\calE_m^w(t) + \frac{C(1+\calM_T)^2\calM_T^2}{\nu^2}.
\eq
Using \eqref{eq:w-Ew} in \eqref{eq:Ew-diff}, we get
\begin{align}\label{eq:Ew-gro0}
\begin{aligned}
\frac{d}{dt}\calE_m^w(t) + \frac{1}{2\nu}\|w\|_{H^{m+1}}^2 &\le \frac{C}{\nu} \lt( \|h\|_{H^m}^2+\|\tilde h\|_{H^m}^2 \rt) \calE_m^w(t) + \frac{C(1+\calM_T)^2\calM_T^2}{\nu^3} \lt( \|h\|_{H^m}^2+\|\tilde h\|_{H^m}^2 \rt) \cr
&\quad + \frac{C(1+\calM_T)^2}{\nu} \|u\|_{H^{m+1}}^2 + \frac{C}{\nu^3} \|\tilde h\|_{H^{m+2}}^2 .
\end{aligned}
\end{align}
 
Since $h(0)=\tilde h(0)$, we have $w(0)=0$, and hence
\[
\calE_m^w(0)=0.
\]
Moreover, by \eqref{eq:den-bdd1} and \eqref{eq:tilde-bdd},
\[
\frac1\nu\int_0^T \lt( \|h(s)\|_{H^m}^2+\|\tilde h(s)\|_{H^{m+2}}^2 \rt)\,ds \le C.
\]
Here $C$ depends only on the initial data, $d,m$, and the density bounds, and is independent of $T$, $\nu$, and $\calM_T$.  Applying Gronwall's lemma to \eqref{eq:Ew-gro0} and using the definition of $\calM_T$, we find
\begin{align*}
\calE_m^w(t) + \frac1\nu\int_0^t\|w(s)\|_{H^{m+1}}^2\,ds &\le \frac{C(1+\calM_T)^2}{\nu} \int_0^t\|u(s)\|_{H^{m+1}}^2\,ds \cr
&\quad + \frac{C(1+\calM_T)^2\calM_T^2}{\nu^3} \int_0^t \lt( \|h(s)\|_{H^m}^2+\|\tilde h(s)\|_{H^m}^2 \rt)\,ds \cr
&\quad + \frac{C}{\nu^3} \int_0^t \|\tilde h(s)\|_{H^{m+2}}^2\,ds \cr
&\le \frac{C(1+\calM_T)^2(1+\calM_T^2)}{\nu^2}.
\end{align*}
Using \eqref{eq:w-Ew} once more, we conclude that
\[
\sup_{0\le t\le T}\|w(t)\|_{H^m}^2 + \frac1\nu\int_0^T\|w(t)\|_{H^{m+1}}^2\,dt \le \frac{C(1+\calM_T)^2(1+\calM_T^2)}{\nu^2}.
\]
This completes the proof.
\end{proof}

%
%
%
%
%

\section{Proof of Theorem \ref{thm:main}}\label{sec:proof-main}

In this section, we prove Theorem \ref{thm:main}. The proof is divided into two parts. First, we close the bootstrap argument and obtain global-in-time existence of smooth solutions. Second, we prove the exponential relaxation estimate.

%
%
%
%
%

\subsection{Global existence}\label{ssec:gwp}

Let
\[
h_0:=\rho_0-1.
\]
Let $\tilde C_{m+1}$ be the constant from Proposition \ref{prop:aux-sob}. Then, we get
\[
\|\tilde h(t)\|_{H^{m+1}}^2 + \frac1\nu \int_0^t \|\tilde h(s)\|_{H^{m+2}}^2\,ds \le \tilde C_{m+1}, \quad t\ge0.
\]
Moreover, by Lemma \ref{lem:aux-maximum}, 
\bq\label{eq:aux-maxip}
\rho_1\le \tilde\rho(t,x)\le \rho_2,
\quad t\ge0,\quad x\in\T^d.
\eq
All constants below may depend on $d,m,\rho_1,\rho_2$ and the initial data, but are independent of $T$ and $\nu$.

\medskip
\noindent{\bf Bootstrap setting.}
Let $T>0$ be such that a smooth solution exists on $[0,T]$. We recall the bootstrap quantity
\[
\calM_T := \sup_{0\le t\le T} \lt( \|h(t)\|_{H^{m+1}}+\|u(t)\|_{H^{m+1}} \rt) + \lt( \nu\int_0^T\|u(t)\|_{H^{m+1}}^2\,dt \rt)^{1/2}.
\]
We introduce a bootstrap radius $R>0$, to be fixed after the a priori
estimates are applied. Assume that, on $[0,T]$,
\bq\label{eq:M-radi}
\calM_T\le R,
\eq
and
\bq\label{eq:b-den-bd}
\frac{\rho_1}{2}
\le
1+h(t,x)
\le
2\rho_2,
\quad (t,x)\in[0,T]\times\T^d.
\eq
We also assume the large damping bootstrap condition
\bq\label{eq:b-small}
\frac{\calM_T(1+\calM_T)}{\nu}
\le
\delta_0,
\eq
where $\delta_0>0$ is the small constant fixed in the previous sections. Once $R$ is chosen in terms of the initial data and the density bounds, the last condition will be ensured by taking $\nu$ sufficiently large.

\medskip
\noindent{\bf Closing the Sobolev bootstrap.}
Under \eqref{eq:b-den-bd} and \eqref{eq:b-small}, Proposition \ref{prop:mod-den} gives a constant $C_1>0$, independent of $T$ and $\nu$, such that 
\[
\sup_{0\le t\le T}\|h(t)\|_{H^m}^2 + \frac1\nu \int_0^T \|h(t)\|_{H^{m+1}}^2\,dt \le C_1.
\]
Using this estimate in Proposition \ref{prop:top-order}, we obtain
\bq\label{eq:t-bd}
\sup_{0\le t\le T} \lt( \|h(t)\|_{H^{m+1}}^2+\|u(t)\|_{H^{m+1}}^2 \rt) + \nu\int_0^T\|u(t)\|_{H^{m+1}}^2\,dt \le e^{CC_1}\lt( \|h_0\|_{H^{m+1}}^2+\|u_0\|_{H^{m+1}}^2 \rt).
\eq
We now determine the bootstrap radius. Set
\[
C_2^* := e^{CC_1} \lt( \|h_0\|_{H^{m+1}}^2+\|u_0\|_{H^{m+1}}^2 \rt),
\]
and choose
\[
R:=6\sqrt{C_2^*}.
\]
Then \eqref{eq:t-bd} implies
\[
\sup_{0\le t\le T} \lt( \|h(t)\|_{H^{m+1}}+\|u(t)\|_{H^{m+1}} \rt) \le \sqrt2\,\sqrt{C_2^*},
\]
and
\[
\lt( \nu\int_0^T\|u(t)\|_{H^{m+1}}^2\,dt \rt)^{1/2} \le \sqrt{C_2^*}.
\]
Thus, we have
\bq\label{eq:M-R}
\calM_T \le (\sqrt2+1)\sqrt{C_2^*} \le 3\sqrt{C_2^*} = \frac R2.
\eq
Hence, the Sobolev bootstrap \eqref{eq:M-radi} is improved. Moreover, if
\[
\frac{R(1+R)}{\nu} \le \frac{\delta_0}{2},
\]
then \eqref{eq:M-R} also gives
\[
\frac{\calM_T(1+\calM_T)}{\nu} \le \frac{\delta_0}{2}.
\]
Therefore, the large damping smallness condition is improved as well.

\medskip
\noindent{\bf Closing the density bootstrap.}
It remains to improve the pointwise density bound. Let
\[
w:=h-\tilde h.
\]
By Proposition \ref{prop:comp-den} and \eqref{eq:M-R},
\[
\sup_{0\le t\le T}\|w(t)\|_{H^m}^2 \le \frac{C(1+\calM_T)^2(1+\calM_T^2)}{\nu^2} \le \frac{C(1+R)^2(1+R^2)}{\nu^2}.
\]
Thus, for some constant $C_*>0$ depending only on the initial data, $d,m$, and the density bounds,
\[
\sup_{0\le t\le T}\|w(t)\|_{H^m} \le \frac{C_*}{\nu}.
\]
Since $m>d/2$, Sobolev embedding gives
\[
\|w(t)\|_{L^\infty} \le C_0\|w(t)\|_{H^m}
\]
for some $C_0>0$. Choose $\nu$ sufficiently large so that
\[
\frac{C_0C_*}{\nu} \le \frac12\rho_1.
\]
Then
\[
\sup_{0\le t\le T}\|w(t)\|_{L^\infty} \le \frac12\rho_1.
\]
Combining this with \eqref{eq:aux-maxip}, we obtain
\[
1+h(t,x) = \tilde\rho(t,x)+w(t,x) \ge \rho_1-\frac{\rho_1}{2} = \frac{\rho_1}{2},
\]
and
\[
1+h(t,x) = \tilde\rho(t,x)+w(t,x) \le \rho_2+\frac{\rho_1}{2} \le 2\rho_2.
\]
Thus the density bootstrap bound \eqref{eq:b-den-bd} is closed.

\medskip
\noindent{\bf Choice of damping threshold and continuation.} We now choose $\nu_0>0$ sufficiently large so that
\[
\frac{R(1+R)}{\nu_0} \le \frac{\delta_0}{2} \quad \text{and} \quad \frac{C_0C_*}{\nu_0} \le \frac12\rho_1.
\]
Then, for every $\nu\ge\nu_0$, the Sobolev and large-damping bootstrap bounds
are improved, while the density bootstrap bound is closed:
\[
\calM_T\le \frac R2, \quad \frac{\calM_T(1+\calM_T)}{\nu}\le\frac{\delta_0}{2}, \quad \frac{\rho_1}{2}\le 1+h\le 2\rho_2.
\]
 
By the local well-posedness and continuation criterion in Theorem \ref{thm:lwp}, there exists a smooth solution on a short time interval, since
\[
\rho_1\le \rho_0(x)\le \rho_2.
\]
Moreover, by the choice of $R$ and $\nu_0$, the bootstrap assumptions hold initially. The estimates above close the bootstrap bounds on any time interval on which the smooth solution exists.

In particular, the $H^{m+1}$ norm of $(h,u)$ remains bounded uniformly in time and the density remains strictly positive. Therefore the continuation criterion in Theorem \ref{thm:lwp} implies that the local solution extends globally in time. Consequently, for every $\nu\ge\nu_0$, the system \eqref{eq:main}--\eqref{eq:initial} admits a global-in-time smooth solution satisfying
\[
(\rho,u) \in C([0,\infty);H^{m+1}(\T^d)) \times C([0,\infty);H^{m+1}(\T^d)),
\]
and
\[
\rho(t,x)>0, \quad t\ge0,\quad x\in\T^d.
\]
This proves the global existence part of Theorem \ref{thm:main}.

%
%
%
%
%
 
\subsection{Large-time behavior}

We now prove the exponential relaxation estimate in Theorem \ref{thm:main}. Let $(\rho,u,\phi)$ be the global smooth solution constructed above. From the global existence argument, we have uniform pointwise density bounds
\[
0<\underline\rho\le \rho(t,x)\le \overline\rho<\infty, \quad (t,x)\in[0,\infty)\times\T^d,
\]
and a uniform Sobolev bound
\[
\sup_{t\ge0} \lt( \|\rho(t)-1\|_{H^{m+1}} + \|u(t)\|_{H^{m+1}} \rt) \le C_*.
\]
The constants in this subsection may depend on $d,m,\underline\rho,\overline\rho,C_*$, but are independent of $t$ and $\nu$ for $\nu\ge\nu_0$.

We consider the free energy
\[
\calE(t) = \frac12\intt \rho |u|^2\,dx + \intt (\rho\log\rho-\rho+1)\,dx + \frac12\intt |\nabla\phi|^2\,dx.
\]
Since $\rho$ is bounded above and below, the entropy density $\rho\log\rho-\rho+1$ is equivalent to $|\rho-1|^2$. Hence, there exist constants $c_0, C_0 > 0$ such that
\bq\label{eq:fe-equi}
c_0\lt(\|u\|_{L^2(\rho)}^2 + \|\rho-1\|_{L^2}^2 + \|\nabla\phi\|_{L^2}^2 \rt) \le \calE(t) \le C_0 \lt(\|u\|_{L^2(\rho)}^2 + \|\rho-1\|_{L^2}^2 + \|\nabla\phi\|_{L^2}^2\rt).
\eq

We also recall from Lemma \ref{lem:be} that the free energy $\calE$ satisfies
\bq\label{eq:fe-iden}
\frac{d}{dt}\calE(t) + \nu\intt \rho|u|^2\,dx = 0.
\eq

The identity \eqref{eq:fe-iden} dissipates only the velocity. To recover the density and potential dissipation, we introduce the interaction functional
\[
\calI(t) := \intt \rho u\cdot\nabla\phi\,dx.
\]
Using
\[
\pa_t(\rho u) = -\nabla\cdot(\rho u\otimes u) -\nabla\rho -\rho\nabla\phi -\nu\rho u,
\]
we compute
\begin{align}\label{eq:I-deri}
\begin{aligned}
\frac{d}{dt}\calI(t) &= \intt  \rho u\otimes u:\nabla^2\phi\,dx - \intt  \nabla\rho\cdot\nabla\phi\,dx - \intt  \rho|\nabla\phi|^2\,dx\cr
&\quad -\nu\intt \rho u\cdot\nabla\phi\,dx + \intt \rho u\cdot\nabla\phi_t\,dx.
\end{aligned}
\end{align}
We estimate the terms on the right-hand side. Since $-\Delta\phi=\rho-1$ and $\intt (\rho-1)\,dx=0$, we get
\[
-\intt \nabla\rho\cdot\nabla\phi\,dx = -\|\rho-1\|_{L^2}^2.
\]
Moreover,
\[
-\intt \rho|\nabla\phi|^2\,dx \le -\underline\rho\|\nabla\phi\|_{L^2}^2.
\]
For the convective term, using the uniform Sobolev bound and elliptic regularity,
\[
\|\nabla^2\phi\|_{L^2}\le C\|\rho-1\|_{L^2},
\]
we get, for any $\eta>0$,
\[
\lt| \intt  \rho u\otimes u:\nabla^2\phi\,dx \rt| \le C\|u\|_{L^\infty}\lt(\intt \rho|u|^2\,dx\rt)^{1/2} \|\rho-1\|_{L^2} \le \eta\|\rho-1\|_{L^2}^2 + C_\eta \intt \rho|u|^2\,dx.
\]
Next, differentiating the Poisson equation in time gives
\[
-\Delta\phi_t=\rho_t=-\nabla\cdot(\rho u).
\]
Thus,
\[
\|\nabla\phi_t\|_{L^2} \le C\|\rho u\|_{L^2} \le C \lt( \intt \rho|u|^2\,dx \rt)^{1/2},
\]
and this deduces
\bq\label{eq:phit-I}
\lt|
\intt \rho u\cdot\nabla\phi_t\,dx
\rt|
\le
C\intt \rho|u|^2\,dx.
\eq
Combining \eqref{eq:I-deri}--\eqref{eq:phit-I}, and choosing $\eta>0$ sufficiently small, we obtain
\bq\label{eq:I-deri1}
\frac{d}{dt}\calI(t) \le -c_1 \lt( \|\rho-1\|_{L^2}^2 + \|\nabla\phi\|_{L^2}^2 \rt) + C_1 \intt \rho|u|^2\,dx -\nu\calI(t).
\eq

We also record that, for any $\eta>0$,
\bq\label{eq:I-bound-LT}
|\calI(t)| \le \lt(\intt \rho|u|^2\,dx\rt)^{1/2} \lt(\intt \rho|\nabla\phi|^2\,dx\rt)^{1/2} \le \eta\|\nabla\phi\|_{L^2}^2 + C_\eta\intt \rho|u|^2\,dx.
\eq

We now define the modified Lyapunov functional
\[
\widetilde{\calE}(t) := \calE(t) + \frac{\delta}{\nu}\calI(t),
\]
where $\delta>0$ will be chosen sufficiently small. By \eqref{eq:I-bound-LT}, for $\delta>0$ sufficiently small and $\nu\ge1$,
\[
\frac12\calE(t) \le \widetilde{\calE}(t) \le \frac32\calE(t).
\]

Using \eqref{eq:fe-iden} and \eqref{eq:I-deri1}, we compute
\[
\frac{d}{dt}\widetilde{\calE}(t) \le -\nu\intt \rho|u|^2\,dx - \frac{\delta c_1}{\nu} \lt( \|\rho-1\|_{L^2}^2 + \|\nabla\phi\|_{L^2}^2 \rt)  + \frac{\delta C_1}{\nu}
\intt \rho|u|^2\,dx -\delta\calI(t).
\]
The last term is controlled by \eqref{eq:I-bound-LT} with the scaling adapted to the two dissipative rates:
\[
\delta|\calI(t)| \le \frac{\delta c_1}{2\nu} \lt( \|\rho-1\|_{L^2}^2 + \|\nabla\phi\|_{L^2}^2 \rt) + C\delta\nu \intt \rho|u|^2\,dx.
\]
Choose $\delta>0$ sufficiently small, independently of $\nu$, so that $C\delta\le 1/4$. The term $\delta C_1/\nu$ is also absorbed by the velocity dissipation after taking $\delta$ small enough. Thus, we obtain
\bq\label{eq:mod-dec}
\frac{d}{dt}\widetilde{\calE}(t) + c_2\nu\intt \rho|u|^2\,dx + \frac{c_2}{\nu} \lt( \|\rho-1\|_{L^2}^2 + \|\nabla\phi\|_{L^2}^2 \rt) \le0.
\eq
Here $c_2>0$ is independent of $t$ and $\nu$.

By the equivalence of $\widetilde{\calE}$ and $\calE$, together with \eqref{eq:fe-equi}, the dissipation in \eqref{eq:mod-dec} controls $\widetilde{\calE}$ with rate
\[
\theta_0=c\min\lt\{\nu,\frac1\nu\rt\}.
\]
In particular, for $\nu\ge1$, one may take
\[
\theta_0=\frac{c}{\nu}.
\]
Thus, we obtain
\[
\frac{d}{dt}\widetilde{\calE}(t) + \theta_0\widetilde{\calE}(t) \le0,
\]
and applying Gr\"onwall's lemma to yield
\[
\widetilde{\calE}(t) \le \widetilde{\calE}(0)e^{-\theta_0 t}.
\]
Using once more the equivalence between $\widetilde{\calE}$ and
$\calE$, we conclude that
\bq\label{eq:lt_fin}
\intt \rho(t)|u(t)|^2\,dx + \|\rho(t)-1\|_{L^2}^2 + \|\nabla\phi(t)\|_{L^2}^2  \le C \lt( \intt \rho_0|u_0|^2\,dx + \|\rho_0-1\|_{L^2}^2 \rt)e^{-\theta_0 t}.
\eq

We now establish the decay estimate in $H^{m+1}$. Since the density remains bounded below and above uniformly in time, \eqref{eq:lt_fin} implies
\bq\label{eq:L2-decay}
    \|u(t)\|_{L^2}^2+\|h(t)\|_{L^2}^2 \le C e^{-\theta_0 t}, \quad h:=\rho-1.
\eq
On the other hand, the global estimate obtained in Section \ref{ssec:gwp} gives
\bq\label{eq:uniform-Hm1}
\sup_{t\ge0} \lt( \|\rho(t)-1\|_{H^{m+1}} + \|u(t)\|_{H^{m+1}} \rt) \le C.
\eq
Interpolating between \eqref{eq:L2-decay} and \eqref{eq:uniform-Hm1}, we obtain
\bq\label{eq:Hm-decay-aux}
    \|h(t)\|_{H^m}^2+\|u(t)\|_{H^m}^2 \le C e^{-\frac{\theta_0}{m+1}t}
\eq
for some $C>0$ independent of $t$. We use this estimate only as an auxiliary bound in the top-order argument below.

We first recall the modified density energy introduced in the proof of Proposition \ref{prop:mod-den}:
\[
\calE_m^h(t) := \|\nabla^m h(t)\|_{L^2}^2 - \frac{2}{\nu} \sum_{|\alpha|=m} \intt \pa^\alpha h\, \pa^\alpha\nabla\cdot((1+h)u)\,dx.
\]
Taking $k=m$ in \eqref{est_h_high}, we have
\bq\label{eq:top-density-decay}
\frac{d}{dt}\calE_m^h + \frac{2}{\nu}\|\nabla^m h\|_{H^1}^2 \le \frac{C}{\nu} \|\nabla^{m+1}h\|_{L^2} \|\nabla^m h\|_{L^2}^2 +\frac{C}{\nu}\|u\|_{H^{m+1}}^2.
\eq
Here and below, the constants may depend on the initial data and the uniform density bounds, but are independent of $t$ and $\nu$. Using \eqref{eq:uniform-Hm1}, \eqref{eq:Hm-decay-aux}, and
\[
\|u\|_{H^{m+1}}^2 \le C\lt(  \|u\|_{L^2}^2 + \|\nabla^{m+1}u\|_{L^2}^2\rt),
\]
we infer from \eqref{eq:top-density-decay} that
\bq\label{eq:top-density-decay2}
    \frac{d}{dt}\calE_m^h + \frac{c}{\nu}\|\nabla^{m+1}h\|_{L^2}^2 \le C e^{-\frac{\theta_0}{m+1}t} +\frac{C}{\nu}\|\nabla^{m+1}u\|_{L^2}^2.
\eq

We next use the top-order hyperbolic energy $\calE_{m+1}$ defined in the proof of Proposition \ref{prop:top-order}. By the uniform $H^{m+1}$ bound and Sobolev embedding, we may increase the damping threshold $\nu_0$ if necessary so that the first term on the right-hand side of \eqref{est_u_high} can be absorbed into the velocity dissipation. Applying Young's inequality to the remaining mixed term, we obtain
\[\begin{aligned}
\frac12&\frac{d}{dt}\mathcal{E}_{m+1} + \nu \|\nabla^{m+1} u\|_{L^2}^2 \\
&\le  C\|\nabla u\|_{L^\infty}\|\nabla^{m+1} u\|_{L^2}^2+ C\lt(\|\nabla u\|_{L^\infty} + \|\nabla h\|_{L^\infty}\rt)\|\nabla^{m+1} u\|_{L^2}\|\nabla^{m+1} h\|_{L^2}\\
&\le \frac\nu4\|\nabla^{m+1} u\|_{L^2}^2 + C e^{-\frac{\theta_0(2m-d)}{2(m+1)}t}.
\end{aligned}\]
Here, in the last inequality, we used the uniform $H^{m+1}$ bound together with the decay estimates for the $W^{1,\infty}$ norms. Indeed, by the Gagliardo--Nirenberg inequality and \eqref{eq:L2-decay}, we have
\[
\|\nabla u(t)\|_{L^\infty} \le C\|\nabla^{m+1} u(t)\|_{L^2}^{\frac{d+2}{2(m+1)}} \|u(t)\|_{L^2}^{\frac{2m-d}{2(m+1)}} \le C e^{-\frac{\theta_0(2m-d)}{4(m+1)}t},
\]
and similarly,
\[
\|\nabla h(t)\|_{L^\infty} \le C e^{-\frac{\theta_0(2m-d)}{4(m+1)}t}.
\]
Consequently, we obtain
\bq\label{eq:top-hyp-decay2}
\frac{d}{dt}\calE_{m+1} + \nu\|\nabla^{m+1}u\|_{L^2}^2 \le C e^{-\frac{\theta_0(2m-d)}{2(m+1)}t}.
\eq

We now fix $C^*>1$ sufficiently large and, if necessary, increase $\nu_0$ further so that $\frac{C^*\nu}{2} > \frac{C}{\nu}$ for all $\nu \ge \nu_0$. Define
\[
\calH_{m+1} := \calE_m^h+C^*\calE_{m+1}.
\]
By the equivalence properties of the modified density energy and the top-order hyperbolic energy established above, there exists $c_1>1$ such that
\bq\label{eq:comb-ener-equi}
\frac{1}{c_1} \lt(  \|\nabla h\|_{H^m}^2 +\|\nabla u\|_{H^m}^2 \rt) \le \calH_{m+1} \le c_1 \lt( \|\nabla h\|_{H^m}^2 +\|\nabla u\|_{H^m}^2\rt).
\eq
Combining \eqref{eq:top-density-decay2} and $C^*$ times \eqref{eq:top-hyp-decay2}, we obtain
\[
\frac{d}{dt}\calH_{m+1} + \frac{c}{\nu}\|\nabla^{m+1}h\|_{L^2}^2 + c\nu\|\nabla^{m+1}u\|_{L^2}^2 \le C e^{-\theta_1t}, \quad \theta_1 := \theta_0 \frac{\min\{1, (m-\frac d2)\}}{m+1}.
\]
Since each component of $\nabla h$ and $\nabla u$ has zero mean, Poincar\'e's inequality gives
\[
\|\nabla h\|_{H^m}^2 \le C\|\nabla^{m+1}h\|_{L^2}^2, \quad \|\nabla u\|_{H^m}^2 \le C\|\nabla^{m+1}u\|_{L^2}^2.
\]
Using \eqref{eq:comb-ener-equi}, we thus deduce that
\[
\frac{d}{dt}\calH_{m+1} + c\min\lt\{ \nu,\frac{1}{\nu} \rt\} \calH_{m+1} \le C e^{-\theta_1t}.
\]
Since $\theta_0$ is of order $\min\{\nu,\nu^{-1}\}$, Gr\"onwall's lemma yields
\[
\calH_{m+1}(t) \le C e^{-c\min\{\nu,\nu^{-1}\}t}.
\]
Consequently, we have
\[
\|\nabla h(t)\|_{H^m}^2 +\|\nabla u(t)\|_{H^m}^2 \le C e^{-c\min\{\nu,\nu^{-1}\}t}.
\]
Combining this estimate with \eqref{eq:L2-decay} concludes the desired exponential relaxation estimate. Together with the global existence result obtained in Section \ref{ssec:gwp}, this completes the proof of Theorem \ref{thm:main}.

%
%
%
%
%

\section{Proof of Theorem \ref{thm:main2}}\label{sec:overdamped-limit}

In this section, we prove Theorem \ref{thm:main2}. We keep the parameter $\nu$ explicit and work in the slow time variable $s=t/\nu$.

For each $\nu\ge\nu_0$, let $(\rho^\nu,u^\nu,\phi^\nu)$ be the global smooth solution constructed in Theorem \ref{thm:main}. We define the slow-time
density, potential, and rescaled flux by
\[
\rho_\nu(s,x):=\rho^\nu(\nu s,x), \quad \Phi_\nu(s,x):=\phi^\nu(\nu s,x), \quad \text{and} \quad J_\nu(s,x):=\nu \rho^\nu u^\nu(\nu s,x).
\]
Then
\[
 \pa_s\rho_\nu+\nabla\cdot J_\nu=0, \quad -\Delta\Phi_\nu=\rho_\nu-1.
\]
Moreover, the momentum equation becomes
\bq\label{eq:mom-nu}
\frac1{\nu^2} \pa_sJ_\nu + \frac1{\nu^2}\nabla\cdot \lt( \frac{J_\nu\otimes J_\nu}{\rho_\nu} \rt) + \nabla\rho_\nu + \rho_\nu\nabla\Phi_\nu = -J_\nu.
\eq

The formal limit is the drift-diffusion--Poisson system
\bq\label{eq:ddp}
 \pa_s\bar\rho -\nabla\cdot(\bar\rho\nabla\bar\Phi) = \Delta\bar\rho, \quad -\Delta\bar\Phi=\bar\rho-1, \quad \bar\rho(0,x)=\rho_0(x),
\eq
and the corresponding drift-diffusion flux is
\[
\bar J:=-\nabla\bar\rho-\bar\rho\nabla\bar\Phi.
\]

%
%
%
%
%

\subsection{Density error}

We first derive the density error estimate. Let $\tilde\rho$ be the auxiliary density introduced in Section \ref{sec:aux-comp}.
Since $\tilde\rho$ solves
\[
\pa_t\tilde\rho -\frac1\nu\nabla\cdot(\tilde\rho\nabla\tilde\phi) = \frac1\nu\Delta\tilde\rho, \quad -\Delta\tilde\phi=\tilde\rho-1,
\]
the rescaled function
\[
\bar\rho(s,x):=\tilde\rho(\nu s,x), \quad \bar\Phi(s,x):=\tilde\phi(\nu s,x),
\]
solves \eqref{eq:ddp}. By uniqueness, it coincides with the limit solution of \eqref{eq:ddp}. Thus, we have
\[
\rho_\nu(s)-\bar\rho(s) = \rho^\nu(\nu s)-\tilde\rho(\nu s).
\]
Using Proposition \ref{prop:comp-den} on $[0,\infty)$, we obtain
\bq\label{eq:od-den-nu}
\sup_{s\ge0} \|\rho_\nu(s)-\bar\rho(s)\|_{H^m}^2 \le \frac{C}{\nu^2}.
\eq
Moreover, the change of variables $t=\nu s$ gives
\[
\int_0^\infty \|\rho_\nu(s)-\bar\rho(s)\|_{H^{m+1}}^2\,ds = \frac1\nu \int_0^\infty \|\rho^\nu(t)-\tilde\rho(t)\|_{H^{m+1}}^2\,dt.
\]
The comparison estimate in Proposition \ref{prop:comp-den} gives
\[
\frac1\nu \int_0^\infty \|\rho^\nu(t)-\tilde\rho(t)\|_{H^{m+1}}^2\,dt \le \frac{C}{\nu^2},
\]
and consequently, 
\bq\label{eq:od-den-nu2}
\int_0^\infty
\|\rho_\nu(s)-\bar\rho(s)\|_{H^{m+1}}^2\,ds \le \frac{C}{\nu^2}.
\eq

%
%
%
%
%

\subsection{Initial layer and flux defect}

Since
\[
J_\nu(0,x)=\nu\rho_0(x)u_0(x),
\]
the rescaled flux contains a fast initial layer. Following the standard relaxation-limit strategy, we remove this layer by introducing $J_L^\nu$ as the solution to the damped heat equation
\bq\label{eq:JL-def}
\frac1{\nu^2} \lt(  \pa_sJ_L^\nu-\Delta J_L^\nu \rt) + J_L^\nu = 0, \quad J_L^\nu(0,x)=\nu\rho_0(x)u_0(x).
\eq
We set
\[
\widehat J_\nu:=J_\nu-J_L^\nu
\]
and define the flux defect by
\[
D_\nu := \widehat J_\nu + \nabla\rho_\nu + \rho_\nu\nabla\Phi_\nu.
\]
Thus $D_\nu$ measures the error in the drift-diffusion flux relation after subtracting the fast initial layer.

We first provide the estimates for $J_L^\nu$.

\begin{lemma} \label{lem:JL-nu}
Let $J_L^\nu$ solve \eqref{eq:JL-def}. Then, for any integer $r\ge0$, we have
\bq\label{eq:JL-est0}
\frac1{\nu^2} \sup_{s\ge0} \|J_L^\nu(s)\|_{H^r}^2 + \int_0^\infty \|J_L^\nu(s)\|_{H^r}^2\,ds + \frac1{\nu^2} \int_0^\infty \|\nabla J_L^\nu(s)\|_{H^r}^2\,ds \le C\|\rho_0u_0\|_{H^r}^2.
\eq
In particular, if $\rho_0,u_0\in H^{m+1}$, then
\bq\label{eq:JL-lap-est}
\frac1{\nu^4} \int_0^\infty \|\Delta J_L^\nu(s)\|_{H^{m-1}}^2\,ds \le \frac{C}{\nu^2}.
\eq
\end{lemma}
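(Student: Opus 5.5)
This is a linear, constant-coefficient parabolic problem on $\T^d$, so we will prove it with an energy identity, or equivalently a Fourier computation. Rewrite \eqref{eq:JL-def} as
\[
\pa_sJ_L^\nu - \Delta J_L^\nu + \nu^2 J_L^\nu = 0 .
\]
Fix a multi-index $\beta$ with $|\beta|\le r$. Apply $\pa^\beta$, test against $\pa^\beta J_L^\nu$, and integrate by parts on the torus. This gives
\[
\frac12\frac{d}{ds}\|\pa^\beta J_L^\nu\|_{L^2}^2 + \|\nabla\pa^\beta J_L^\nu\|_{L^2}^2 + \nu^2\|\pa^\beta J_L^\nu\|_{L^2}^2 = 0 .
\]
Sum over $|\beta|\le r$ and integrate over $s\in[0,\infty)$; all three terms are nonnegative. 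We obtain
\[
\sup_{s\ge0}\|J_L^\nu(s)\|_{H^r}^2 + 2\int_0^\infty\|\nabla J_L^\nu\|_{H^r}^2\,ds + 2\nu^2\int_0^\infty\|J_L^\nu\|_{H^r}^2\,ds \le \|J_L^\nu(0)\|_{H^r}^2 = \nu^2\|\rho_0u_0\|_{H^r}^2 .
\]
Dividing by $\nu^2$ gives exactly \eqref{eq:JL-est0}, with $C=1$. Equivalently, each Fourier mode decays like $e^{-(|k|^2+\nu^2)s}$, and integrating in time produces the factor $(|k|^2+\nu^2)^{-1}$.

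For \eqref{eq:JL-lap-est}, note that $\|\Delta J_L^\nu\|_{H^{m-1}}\le C\|\nabla J_L^\nu\|_{H^m}$. Applying \eqref{eq:JL-est0} with $r=m$ then gives
\[
\frac1{\nu^4}\int_0^\infty\|\Delta J_L^\nu\|_{H^{m-1}}^2\,ds \le \frac{C}{\nu^2}\cdot\frac1{\nu^2}\int_0^\infty\|\nabla J_L^\nu\|_{H^m}^2\,ds \le \frac{C}{\nu^2}\|\rho_0u_0\|_{H^m}^2 .
\]
It remains to bound $\|\rho_0u_0\|_{H^m}$. Since $m>d/2$, $H^{m}$ is an algebra, so $\|\rho_0u_0\|_{H^{m}}\le C\|\rho_0\|_{H^{m+1}}\|u_0\|_{H^{m+1}}$. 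Here $\|\rho_0\|_{H^{m+1}}\le 1+\|h_0\|_{H^{m+1}}$, which is finite under the assumptions of Theorem \ref{thm:main}. This bound absorbs the initial-data dependence into $C$.

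No step is a genuine obstacle. The only care needed is the bookkeeping of powers of $\nu$: the full strength of the zeroth-order dissipation $\nu^2\|J_L^\nu\|^2$ is used for the $L^2_s$ bound, and the gradient dissipation for the $\nabla$ bound. The factor $\nu^{-4}$ in \eqref{eq:JL-lap-est} is then more than enough, since the gradient dissipation already controls $\nu^{-2}\int\|\nabla J_L^\nu\|_{H^m}^2$ uniformly.
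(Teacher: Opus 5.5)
Your proof is correct and follows the paper's argument: the same $H^r$ energy identity for the damped heat equation, integrated in $s$. The differences are minor. For \eqref{eq:JL-lap-est} you use the gradient dissipation at $r=m$, whereas the paper uses the zeroth-order term at $r=m+1$, which even yields $C/\nu^4$. Also, your claim $C=1$ is slightly off: the supremum attains $\|J_L^\nu(0)\|_{H^r}^2$ at $s=0$, and the two time integrals together contribute another half of that, so the constant is $3/2$. This is harmless, since the lemma only asserts some $C$.
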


\begin{proof}
Apply $ \pa^\alpha$ to \eqref{eq:JL-def}, multiply by $ \pa^\alpha J_L^\nu$, integrate over $\T^d$, and sum over $|\alpha|\le r$. This gives
\[
\frac1{2\nu^2} \frac{d}{ds} \|J_L^\nu\|_{H^r}^2 + \frac1{\nu^2} \|\nabla J_L^\nu\|_{H^r}^2 + \|J_L^\nu\|_{H^r}^2 =0.
\]
Integrating in $s$ and using
\[
J_L^\nu(0)=\nu\rho_0u_0
\]
yields \eqref{eq:JL-est0}. Taking $r=m+1$ in \eqref{eq:JL-est0} and using
\[
\|\Delta J_L^\nu\|_{H^{m-1}}\le C\|J_L^\nu\|_{H^{m+1}}
\]
gives \eqref{eq:JL-lap-est}.
\end{proof}

We next collect the slow-time bounds needed for the flux defect estimate.

\begin{lemma} \label{lem:st-bds-nu}
Under the assumptions of Theorem \ref{thm:main}, the slow-time variables satisfy
\[
\int_0^\infty \|J_\nu(s)\|_{H^{m+1}}^2\,ds + \int_0^\infty \|\rho_\nu(s)-1\|_{H^{m+1}}^2\,ds \le C.
\]
Define
\[
A_\nu:=\nabla\rho_\nu+\rho_\nu\nabla\Phi_\nu.
\]
Then we have
\bq\label{eq:A-bd}
\int_0^\infty \|A_\nu(s)\|_{H^{m-1}}^2\,ds + \int_0^\infty \| \pa_sA_\nu(s)\|_{H^{m-1}}^2\,ds \le C.
\eq
Moreover,
\[
\frac1{\nu^4} \int_0^\infty \lt\| \nabla\cdot \lt( \frac{J_\nu\otimes J_\nu}{\rho_\nu} \rt) \rt\|_{H^{m-1}}^2\,ds \le \frac{C}{\nu^2}.
\]
Here $C>0$ is independent of $\nu$.
\end{lemma}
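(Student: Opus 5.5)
The plan is to reduce every bound to the fast-time estimates of Sections \ref{sec:apri}--\ref{sec:proof-main} by the change of variables $t=\nu s$. Once the bootstrap is closed in Section \ref{ssec:gwp}, those estimates hold on $[0,\infty)$ with constants independent of $\nu$:
\[
\sup_{t\ge0}\big(\|h\|_{H^{m+1}}+\|u\|_{H^{m+1}}\big)\le C,\qquad \nu\int_0^\infty\|u\|_{H^{m+1}}^2\,dt\le C,\qquad \frac1\nu\int_0^\infty\|h\|_{H^{m+1}}^2\,dt\le C_1 ,
\]
together with the density bounds $\rho_1/2\le\rho\le2\rho_2$. Throughout, products are handled with the algebra property of $H^m$ and the bilinear estimate $\|fg\|_{H^{m-1}}\le C\|f\|_{H^{m-1}}\|g\|_{H^m}$, both valid since $m>d/2$. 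Composition terms $\rho^{-1}$ are handled with Lemma \ref{lem:pow-est}.

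\textbf{Step 1: the first bound.} Since $ds=dt/\nu$,
\[
\int_0^\infty\|J_\nu\|_{H^{m+1}}^2\,ds=\frac1\nu\int_0^\infty\nu^2\|\rho u\|_{H^{m+1}}^2\,dt\le C\big(1+\sup_t\|h\|_{H^{m+1}}\big)^2\,\nu\int_0^\infty\|u\|_{H^{m+1}}^2\,dt\le C .
\]
Similarly,
\[
\int_0^\infty\|\rho_\nu-1\|_{H^{m+1}}^2\,ds=\frac1\nu\int_0^\infty\|h\|_{H^{m+1}}^2\,dt\le C_1
\]
by Proposition \ref{prop:mod-den}.

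\textbf{Step 2: the bound \eqref{eq:A-bd} on $A_\nu$.} For the first term I write $A_\nu=\nabla\rho_\nu+\nabla\Phi_\nu+(\rho_\nu-1)\nabla\Phi_\nu$. Elliptic regularity gives $\|\nabla\Phi_\nu\|_{H^{m+2}}\le C\|\rho_\nu-1\|_{H^{m+1}}$, and the uniform $H^m$ bound on $\rho_\nu-1$ handles the product. Together these give $\|A_\nu\|_{H^{m-1}}\le C\|\rho_\nu-1\|_{H^{m+1}}$, and Step 1 bounds the time integral.

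For the time derivative I use $\pa_s\rho_\nu=-\nabla\cdot J_\nu$ and $\nabla\pa_s\Phi_\nu=-\nabla\Delta^{-1}\nabla\cdot J_\nu$ to expand
\[
\pa_sA_\nu=-\nabla\nabla\cdot J_\nu-(\nabla\cdot J_\nu)\nabla\Phi_\nu-\rho_\nu\nabla\Delta^{-1}\nabla\cdot J_\nu .
\]
The three terms are bounded in $H^{m-1}$ as follows:
\begin{itemize}
\item $\|\nabla\nabla\cdot J_\nu\|_{H^{m-1}}\le\|J_\nu\|_{H^{m+1}}$;
\item the second term is at most $C\|J_\nu\|_{H^m}\|\nabla\Phi_\nu\|_{H^m}$;
\item the third is at most $C(1+\|\rho_\nu-1\|_{H^m})\|J_\nu\|_{H^m}$, using that $\nabla\Delta^{-1}\nabla\cdot$ is bounded on $H^m$.
\end{itemize}
With the uniform $H^m$ bounds on $\rho_\nu-1$ and $\nabla\Phi_\nu$, this gives $\|\pa_sA_\nu\|_{H^{m-1}}\le C\|J_\nu\|_{H^{m+1}}$, which is square-integrable by Step 1.

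\textbf{Step 3: the convective bound.} First,
\[
\Big\|\nabla\cdot\Big(\frac{J_\nu\otimes J_\nu}{\rho_\nu}\Big)\Big\|_{H^{m-1}}\le\Big\|\frac{J_\nu\otimes J_\nu}{\rho_\nu}\Big\|_{H^m}\le C\big(1+\|\rho_\nu^{-1}-1\|_{H^m}\big)\|J_\nu\|_{H^m}^2 .
\]
Lemma \ref{lem:pow-est} and the density bounds give $\|\rho_\nu^{-1}-1\|_{H^m}\le C$. The key point is that $\sup_s\|J_\nu\|_{H^m}=\nu\sup_t\|\rho u\|_{H^m}\le C\nu$. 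Hence
\[
\frac1{\nu^4}\int_0^\infty\Big\|\nabla\cdot\Big(\frac{J_\nu\otimes J_\nu}{\rho_\nu}\Big)\Big\|_{H^{m-1}}^2\,ds\le\frac{C}{\nu^4}\Big(\sup_s\|J_\nu\|_{H^m}^2\Big)\int_0^\infty\|J_\nu\|_{H^m}^2\,ds\le\frac{C\nu^2}{\nu^4}=\frac{C}{\nu^2}.
\]

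\textbf{Main obstacle.} I expect the delicate point to be bookkeeping rather than analysis. First, each fast-time bound has to be used with its correct power of $\nu$: the velocity dissipation $\nu\int\|u\|^2$ absorbs exactly the factor $\nu$ produced by $J_\nu=\nu\rho u$ together with $ds=dt/\nu$. Second, the products must be kept in $H^{m-1}$, which may not be an algebra when $m-1\le d/2$, so one factor must always be placed in $H^m$. I would also check that all constants from Propositions \ref{prop:mod-den} and \ref{prop:top-order} are uniform on $[0,\infty)$, which follows because the bootstrap closes for every $T$.
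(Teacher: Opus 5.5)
Your proposal is correct and follows essentially the same route as the paper. Both use the change of variables $t=\nu s$ together with the uniform bounds $\nu\int_0^\infty\|u\|_{H^{m+1}}^2\,dt\le C$ and $\frac1\nu\int_0^\infty\|h\|_{H^{m+1}}^2\,dt\le C_1$, elliptic regularity for $A_\nu$ and $\pa_sA_\nu$, and the size $\sup_s\|J_\nu\|_{H^m}\le C\nu$ for the convective term.

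The only differences are cosmetic. In Step 3 the paper avoids Lemma \ref{lem:pow-est} by writing $J_\nu\otimes J_\nu/\rho_\nu=\nu\,u_\nu\otimes J_\nu$ with $u_\nu(s)=u^\nu(\nu s)$. Your formula for $\nabla\pa_s\Phi_\nu$ has a harmless sign slip: it equals $+\nabla\Delta^{-1}\nabla\cdot J_\nu$.
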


\begin{proof}
Using the uniform upper and lower bounds of $\rho^\nu$, the global estimate
from Theorem \ref{thm:main}, and the change of variables $t=\nu s$, we get
\bq\label{eq:J_nu}
\int_0^\infty
\|J_\nu(s)\|_{H^{m+1}}^2\,ds \le C\nu^2 \int_0^\infty \|u^\nu(\nu s)\|_{H^{m+1}}^2\,ds = C\nu \int_0^\infty \|u^\nu(t)\|_{H^{m+1}}^2\,dt \le C.
\eq
Similarly,
\[
\int_0^\infty \|\rho_\nu(s)-1\|_{H^{m+1}}^2\,ds = \frac1\nu \int_0^\infty \|\rho^\nu(t)-1\|_{H^{m+1}}^2\,dt \le C,
\]
where we used the modified density estimate.

By elliptic regularity and the product estimate,
\[
\|A_\nu\|_{H^{m-1}} \le C\|\rho_\nu-1\|_{H^m}.
\]
This gives the first part of \eqref{eq:A-bd}. To estimate
$ \pa_sA_\nu$, we use
\[
 \pa_s\rho_\nu=-\nabla\cdot J_\nu, \quad -\Delta \pa_s\Phi_\nu= \pa_s\rho_\nu.
\]
Thus, by elliptic regularity and the product estimate,
\[
\| \pa_sA_\nu\|_{H^{m-1}} \le C\|J_\nu\|_{H^{m+1}}.
\]
Therefore \eqref{eq:A-bd} follows from the bound on $J_\nu$.

Finally, we estimate the convective flux term. Let
\[
u_\nu(s,x):=u^\nu(\nu s,x).
\]
Since $J_\nu=\nu\rho_\nu u_\nu$, we can write
\[
\frac{J_\nu\otimes J_\nu}{\rho_\nu} = \nu\, u_\nu\otimes J_\nu.
\]
Thus, using that $m>d/2$, we obtain
\[
\lt\| \nabla\cdot \lt( \frac{J_\nu\otimes J_\nu}{\rho_\nu} \rt) \rt\|_{H^{m-1}} \le C\nu \|u_\nu\otimes J_\nu\|_{H^m} \le C\nu \|u_\nu\|_{H^m} \|J_\nu\|_{H^m}.
\]
Hence, we have
\begin{align*}
\frac1{\nu^4} \int_0^\infty \lt\| \nabla\cdot \lt( \frac{J_\nu\otimes J_\nu}{\rho_\nu} \rt) \rt\|_{H^{m-1}}^2\,ds &\le \frac{C}{\nu^2} \int_0^\infty \|u_\nu(s)\|_{H^m}^2 \|J_\nu(s)\|_{H^m}^2\,ds \\
&\le \frac{C}{\nu^2} \lt( \sup_{s\ge0}\|u_\nu(s)\|_{H^m}^2 \rt) \int_0^\infty \|J_\nu(s)\|_{H^m}^2\,ds \\
&\le \frac{C}{\nu^2}.
\end{align*}
Here we used the uniform-in-time $H^{m+1}$ bound for $u^\nu$ and slow-time flux estimate \eqref{eq:J_nu}.
\end{proof}

We now prove the flux defect estimate.

\begin{proposition} 
Under the assumptions of Theorem \ref{thm:main}, there exists a constant
$C>0$, independent of $\nu$, such that
\bq\label{eq:Dar-def-est}
\int_0^\infty \|D_\nu(s)\|_{H^{m-1}}^2\,ds + \frac1{\nu^2} \sup_{s\ge0} \|\widehat J_\nu(s)\|_{H^{m-1}}^2 \le \frac{C}{\nu^2}.
\eq
\end{proposition}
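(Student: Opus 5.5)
We derive an evolution equation for $D_\nu$ and run an $H^{m-1}$ energy estimate on it, with damping rate $\nu^2$ in the slow time. Subtracting \eqref{eq:JL-def} from \eqref{eq:mom-nu} and using $\widehat J_\nu = D_\nu - A_\nu$, with $A_\nu=\nabla\rho_\nu+\rho_\nu\nabla\Phi_\nu$ as in Lemma \ref{lem:st-bds-nu}, gives
\[
\frac1{\nu^2}\pa_s\widehat J_\nu + \frac1{\nu^2}\nabla\cdot\Big(\frac{J_\nu\otimes J_\nu}{\rho_\nu}\Big) - \frac1{\nu^2}\Delta J_L^\nu + D_\nu = 0 .
\]
Substituting $\widehat J_\nu=D_\nu-A_\nu$, we obtain
\[
\frac1{\nu^2}\pa_s D_\nu + D_\nu = \frac1{\nu^2}\Big(\pa_sA_\nu - \nabla\cdot\Big(\frac{J_\nu\otimes J_\nu}{\rho_\nu}\Big) + \Delta J_L^\nu\Big) =: \frac1{\nu^2}G_\nu .
\]
Next we apply $\pa^\alpha$ with $|\alpha|\le m-1$, test with $\pa^\alpha D_\nu$, and use Young's inequality. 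This yields
\[
\frac1{2\nu^2}\frac{d}{ds}\|D_\nu\|_{H^{m-1}}^2 + \frac12\|D_\nu\|_{H^{m-1}}^2 \le \frac{C}{\nu^4}\|G_\nu\|_{H^{m-1}}^2 .
\]
We then integrate over $s\in(0,\infty)$.

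\emph{Bounding the forcing.} By Lemma \ref{lem:st-bds-nu}, $\nu^{-4}\int_0^\infty\|\pa_sA_\nu\|_{H^{m-1}}^2\,ds\le C\nu^{-4}$ and the convective contribution is at most $C\nu^{-2}$. By \eqref{eq:JL-lap-est}, the $\Delta J_L^\nu$ term is at most $C\nu^{-2}$. Hence $\int_0^\infty \|D_\nu\|_{H^{m-1}}^2\,ds \le \nu^{-2}\|D_\nu(0)\|_{H^{m-1}}^2 + C\nu^{-2}$.

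\emph{Initial datum.} This is the point of subtracting the layer. Since $J_L^\nu(0)=J_\nu(0)$, we have $\widehat J_\nu(0)=0$, so $D_\nu(0)=A_\nu(0)=\nabla\rho_0+\rho_0\nabla\Phi_\nu(0)$. This is bounded in $H^{m-1}$ (indeed in $H^m$) by $C(\|\rho_0-1\|_{H^{m+1}})$, independently of $\nu$. Thus $\int_0^\infty\|D_\nu\|_{H^{m-1}}^2\,ds\le C\nu^{-2}$. The same differential inequality also gives $\sup_s\|D_\nu(s)\|_{H^{m-1}}^2\le C$.

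\emph{Sup bound on $\widehat J_\nu$.} We write $\widehat J_\nu = D_\nu - A_\nu$. We have $\sup_s\|A_\nu\|_{H^{m-1}}\le C\sup_t\|\rho^\nu-1\|_{H^m}\le C$ by Theorem \ref{thm:main}. Therefore $\nu^{-2}\sup_s\|\widehat J_\nu\|_{H^{m-1}}^2\le C\nu^{-2}$, which completes \eqref{eq:Dar-def-est}.

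The step needing the most care is the forcing term $\pa_sA_\nu$, in particular that the $1/\nu^2$ prefactor survives. It requires $\pa_s A_\nu\in L^2_sH^{m-1}$ uniformly, which Lemma \ref{lem:st-bds-nu} supplies via $\|J_\nu\|_{L^2_sH^{m+1}}\le C$. This in turn hinges on the velocity dissipation $\nu\int\|u\|_{H^{m+1}}^2\,dt\le C$ from the bootstrap. Apart from that, the argument is a linear relaxation ODE estimate, and we must avoid any step that would require $\pa_s\widehat J_\nu$ bounds uniform in $\nu$.
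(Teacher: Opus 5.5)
Your proof is correct and is essentially the paper's: both test $\nu^{-2}\pa_s\widehat J_\nu + D_\nu = -\nu^{-2}(\calN_\nu+\Delta J_L^\nu)$ against $\pa^\alpha D_\nu$ and close with Lemmas \ref{lem:JL-nu} and \ref{lem:st-bds-nu}. The only difference is bookkeeping: the paper splits the time derivative as $\frac12\frac{d}{ds}\|\widehat J_\nu\|_{H^{m-1}}^2+\frac{d}{ds}\langle\widehat J_\nu,A_\nu\rangle_{H^{m-1}}-\langle\widehat J_\nu,\pa_sA_\nu\rangle_{H^{m-1}}$, so it starts from $\widehat J_\nu(0)=0$ but must absorb a terminal cross term, whereas you use $\|D_\nu\|_{H^{m-1}}^2$ directly as the energy with the bounded initial value $\|A_\nu(0)\|_{H^{m-1}}^2$ and get the slightly better weight $\nu^{-4}$ on $\pa_sA_\nu$; also, the $\Delta J_L^\nu$ term in your first display should carry $+\nu^{-2}$ on the left-hand side, a harmless sign slip since only its norm is used.
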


\begin{proof}
Set
\[
\calN_\nu := \nabla\cdot \lt( \frac{J_\nu\otimes J_\nu}{\rho_\nu} \rt).
\]
From \eqref{eq:mom-nu}, we get
\[
\frac1{\nu^2} \pa_sJ_\nu + J_\nu + A_\nu = -\frac1{\nu^2}\calN_\nu, \quad A_\nu=\nabla\rho_\nu+\rho_\nu\nabla\Phi_\nu.
\]
On the other hand, \eqref{eq:JL-def} gives
\[
\frac1{\nu^2} \pa_sJ_L^\nu + J_L^\nu = \frac1{\nu^2}\Delta J_L^\nu.
\]
Subtracting the two equations and using
\[
\widehat J_\nu=J_\nu-J_L^\nu, \quad D_\nu=\widehat J_\nu+A_\nu,
\]
we obtain
\bq\label{eq:Jhat-eq}
\frac1{\nu^2} \pa_s\widehat J_\nu + D_\nu = -\frac1{\nu^2}\calN_\nu -\frac1{\nu^2}\Delta J_L^\nu.
\eq

For each multi-index $\alpha$ with $|\alpha|\le m-1$, we apply $\pa^\alpha$ to \eqref{eq:Jhat-eq}, multiply the resulting equation by $\pa^\alpha D_\nu$, and integrate over $\T^d$. Summing over
$|\alpha|\le m-1$, we obtain
\begin{align}\label{eq:defect-e0}
\begin{aligned}
\|D_\nu\|_{H^{m-1}}^2 &= -\frac1{\nu^2} \sum_{|\alpha|\le m-1} \intt \pa^\alpha\pa_s\widehat J_\nu \cdot \pa^\alpha D_\nu\,dx  -\frac1{\nu^2} \sum_{|\alpha|\le m-1} \intt \pa^\alpha\calN_\nu \cdot \pa^\alpha D_\nu\,dx \cr
&\quad -\frac1{\nu^2} \sum_{|\alpha|\le m-1} \intt \pa^\alpha\Delta J_L^\nu \cdot \pa^\alpha D_\nu\,dx .
\end{aligned}
\end{align}
Since $D_\nu=\widehat J_\nu+A_\nu$, we find
\begin{align}\label{eq:Jhat-t}
\begin{aligned}
-\frac1{\nu^2} \sum_{|\alpha|\le m-1} \intt \pa^\alpha\pa_s\widehat J_\nu \cdot \pa^\alpha D_\nu\,dx &  = -\frac1{2\nu^2} \frac{d}{ds} \|\widehat J_\nu\|_{H^{m-1}}^2 -\frac1{\nu^2} \frac{d}{ds} \sum_{|\alpha|\le m-1} \intt\pa^\alpha\widehat J_\nu \cdot\pa^\alpha A_\nu\,dx \cr
&\quad + \frac1{\nu^2} \sum_{|\alpha|\le m-1} \intt \pa^\alpha\widehat J_\nu \cdot \pa^\alpha\pa_sA_\nu\,dx .
\end{aligned}
\end{align}
Using $\widehat J_\nu=D_\nu-A_\nu$, we estimate
\begin{align*}
\frac1{\nu^2} \lt| \sum_{|\alpha|\le m-1} \intt \pa^\alpha\widehat J_\nu \cdot \pa^\alpha\pa_sA_\nu\,dx \rt| &\le \frac1{\nu^2} \|\widehat J_\nu\|_{H^{m-1}} \|\pa_sA_\nu\|_{H^{m-1}}
\cr
& \le \frac18\|D_\nu\|_{H^{m-1}}^2 + \frac{C}{\nu^2}\|A_\nu\|_{H^{m-1}}^2 + \frac{C}{\nu^2}\|\pa_sA_\nu\|_{H^{m-1}}^2.
\end{align*}
Similarly, we get
\[
\frac1{\nu^2} \lt| \sum_{|\alpha|\le m-1} \intt \pa^\alpha\calN_\nu \cdot \pa^\alpha D_\nu\,dx \rt| \le \frac18\|D_\nu\|_{H^{m-1}}^2 + \frac{C}{\nu^4} \|\calN_\nu\|_{H^{m-1}}^2,
\]
and
\[
\frac1{\nu^2} \lt| \sum_{|\alpha|\le m-1} \intt \pa^\alpha\Delta J_L^\nu \cdot \pa^\alpha D_\nu\,dx \rt| \le \frac18\|D_\nu\|_{H^{m-1}}^2 + \frac{C}{\nu^4} \|\Delta J_L^\nu\|_{H^{m-1}}^2 .
\]
Combining these estimates with \eqref{eq:defect-e0}--\eqref{eq:Jhat-t}, we deduce
\begin{align}\label{eq:defect-diff}
\begin{aligned}
&\frac12\|D_\nu\|_{H^{m-1}}^2 + \frac1{2\nu^2} \frac{d}{ds} \|\widehat J_\nu\|_{H^{m-1}}^2  + \frac1{\nu^2} \frac{d}{ds} \sum_{|\alpha|\le m-1} \intt \pa^\alpha\widehat J_\nu \cdot \pa^\alpha A_\nu\,dx \cr
&\quad \le \frac{C}{\nu^2} \|A_\nu\|_{H^{m-1}}^2 + \frac{C}{\nu^2} \|\pa_sA_\nu\|_{H^{m-1}}^2  + \frac{C}{\nu^4} \|\calN_\nu\|_{H^{m-1}}^2 + \frac{C}{\nu^4} \|\Delta J_L^\nu\|_{H^{m-1}}^2 .
\end{aligned}
\end{align} 
Integrating \eqref{eq:defect-diff} over $[0,T]$ and using $\widehat J_\nu(0)=0$,  we obtain
\begin{align}\label{eq:defect-int}
\begin{aligned}
& \int_0^T \|D_\nu(s)\|_{H^{m-1}}^2\,ds + \frac1{\nu^2} \|\widehat J_\nu(T)\|_{H^{m-1}}^2 \cr
&\quad \le \frac{C}{\nu^2} + \frac{C}{\nu^2} \|A_\nu(T)\|_{H^{m-1}}^2 + \frac{C}{\nu^2} \int_0^T \lt( \|A_\nu(s)\|_{H^{m-1}}^2 + \| \pa_sA_\nu(s)\|_{H^{m-1}}^2 \rt)ds \cr
&\quad \quad + \frac{C}{\nu^4} \int_0^T \|\calN_\nu(s)\|_{H^{m-1}}^2\,ds + \frac{C}{\nu^4} \int_0^T \|\Delta J_L^\nu(s)\|_{H^{m-1}}^2\,ds .
\end{aligned}
\end{align}
Here, we used the terminal cross-term estimate
\[
\frac1{\nu^2} \lt| \sum_{|\alpha|\le m-1} \intt \pa^\alpha\widehat J_\nu \cdot \pa^\alpha A_\nu\,dx \rt| \le \frac1{4\nu^2} \|\widehat J_\nu(T)\|_{H^{m-1}}^2 + \frac{C}{\nu^2} \|A_\nu(T)\|_{H^{m-1}}^2.
\]
By Lemmas \ref{lem:JL-nu} and \ref{lem:st-bds-nu}, the right-hand side of \eqref{eq:defect-int} is bounded by $C/\nu^2$, uniformly in $T$. Hence, we have
\[
\int_0^T \|D_\nu(s)\|_{H^{m-1}}^2\,ds + \frac1{\nu^2} \|\widehat J_\nu(T)\|_{H^{m-1}}^2 \le \frac{C}{\nu^2}.
\]
Since the estimate is uniform in $T$, taking the supremum over $T\ge0$ yields \eqref{eq:Dar-def-est}. This completes the proof.
\end{proof}

We now derive the convergence of the rescaled flux after subtracting the initial layer. Since
\[
J_\nu-J_L^\nu-\bar J = D_\nu - \lt[ \nabla(\rho_\nu-\bar\rho) + \rho_\nu\nabla(\Phi_\nu-\bar\Phi) + (\rho_\nu-\bar\rho)\nabla\bar\Phi \rt],
\]
the product estimate, elliptic regularity, and the density error estimates \eqref{eq:od-den-nu}--\eqref{eq:od-den-nu2} yield
\[
\int_0^\infty \|J_\nu(s)-J_L^\nu(s)-\bar J(s)\|_{H^{m-1}}^2\,ds \le \frac{C}{\nu^2}.
\]
Combining the density estimates \eqref{eq:od-den-nu}--\eqref{eq:od-den-nu2} with the flux estimate above, we obtain \eqref{eq:od-den-err} and \eqref{eq:od-flux-err}. This completes the proof of Theorem \ref{thm:main2}.

%
%
%
%
%
%

\section*{Acknowledgments}
The work of Y.-P. Choi was supported by NRF grant no. 2022R1A2C1002820 and RS-2024-00406821. 
 
%
%
%
%

%

%
%
%
%

\end{document}